%-------------------------------------------------------------------------
%AMS-LaTeX Paper *********************************************************
%-------------------------------------------------------------------------
\documentclass[a4paper,11pt]{amsart}
\usepackage{amsmath}
\usepackage{amsthm}
\usepackage{amsfonts}
\usepackage{amssymb}
\usepackage{latexsym}
\usepackage{mathrsfs}

\usepackage{enumitem}

\usepackage[section]{placeins}
\usepackage{xcolor}

\usepackage[abs]{overpic}
\usepackage[titletoc]{appendix}
\usepackage{graphicx}
\usepackage{latexsym}
\usepackage[utf8]{inputenc}
\usepackage{epsfig}
\usepackage{psfrag}

\usepackage{tikz}
\usepackage[normalem]{ulem}

\usepackage{diagbox}

%%% next four for restate  theorems
%\usepackage{thmtools}
%\usepackage{thm-restate}

%\usepackage{hyperref}
%\usepackage{cleveref}

\numberwithin{equation}{section}

% ENVIRONMENTS ----------------------------------------------------------
\newtheorem{theorem}{Theorem}[section]{\bf}{\it}
\newtheorem{lemma}[theorem]{Lemma}{\bf}{\it}
\newtheorem{proposition}[theorem]{Proposition}{\bf}{\it}
\newtheorem{corollary}[theorem]{Corollary}{\bf}{\it}
{\bf}{\it} % Main Theorems, numbered A,B,...
{\bf}{\it}
\newtheorem*{theorem*}{Theorem}

\newtheorem{remark}[theorem]{Remark}%[theorem]{Remark}{\bf}{\it}
{\bf}{\it}
{\bf}{\it}

\newtheorem*{namedtheorem}{\theoremname}
\newcommand{\theoremname}{testing}
\newenvironment{named}[1]{\renewcommand{\theoremname}{#1}\begin{namedtheorem}}{\end{namedtheorem}}

\newtheorem*{definition*}{Definition}

{\bf}{\it}
{\bf}{\it}
{\bf}{\it}
\newtheorem{example}[theorem]{Example}
\newtheorem*{example*}{Example}

\theoremstyle{remark}

\theoremstyle{definition}

\theoremstyle{remark}

% MATHEMATICAL SYMBOLS --------------------------------------------------

\newcommand{\R}{\mathbb R}
\newcommand{\Z}{\mathbb Z}
\newcommand{\C}{\mathbb C}
\newcommand{\N}{\mathbb N}

\newcommand{\loc}{{\operatorname{loc}}}

%\renewcommand\u{\psi}

% COMMANDS --------------------------------------------------------------

\newdimen\vintkern\vintkern11pt
\def\vint{-\kern-\vintkern\int}

% Joitakin komentoja ----------------------------------------------------

\newcommand{\dn}{\;\mathrm{d}}
\newcommand{\norm}[1]{\lVert #1 \rVert}

\newcommand{\bR}{\mathbb{R}}
\newcommand{\bS}{\mathbb{S}}

\newcommand{\cF}{\mathcal{F}}

\newcommand{\vol}{\mathrm{vol}}

\newcommand{\dR}{\mathrm{dR}}
\newcommand{\bT}{\mathbb{T}}

\newcommand{\interior}{\mathrm{int}}

\newcommand{\ccL}{\mathscr{L}}
\newcommand{\ccR}{\mathscr{R}}
\newcommand{\cW}{\mathscr W}

\newcommand{\cH}{\mathcal{H}}

\newcommand{\tor}{\mathrm{Tor} \,}

\newcommand{\abs}[1]{\left\lvert #1 \right\rvert}

%%%%%%%%
%% Renew's 
%%%%%%%%

\renewcommand{\le}{\leqslant}
\renewcommand{\ge}{\geqslant}

%%%%%%%%%%%%%%%%%%%%%%%%%%%%%%%%%%%%%%%%%%%%%%%%%%%%%%%%%%%%%%%%%%%%%%%%%%%%%%%
%%%%%%%%%%%%%%%%%%%%%%%%%%%%%%%%%%%%%%%%%%%%%%%%%%%%%%%%%%%%%%%%%%%%%%%%%%%%%%%
%%%%%%%%%%%%%%%%%%%%%%%%%%%%%%%%%%%%%%%%%%%%%%%%%%%%%%%%%%%%%%%%%%%%%%%%%%%%%%%

\title[Quasiregularly elliptic manifolds]{De Rham algebras of closed quasiregularly elliptic manifolds are Euclidean}

\author{Susanna Heikkilä}
\address{Department of Mathematics and Statistics, P.O. Box 68 (Pietari Kalmin katu 5), FI-00014 University of Helsinki, Finland}
\email{susanna.a.heikkila@helsinki.fi}

\author{Pekka Pankka}
\address{Department of Mathematics and Statistics, P.O. Box 68 (Pietari Kalmin katu 5), FI-00014 University of Helsinki, Finland}
\email{pekka.pankka@helsinki.fi}

\thanks{This work was supported in part by the Academy of Finland project \#332671. This work is also supported in part by the National Science Foundation under Grant No. DMS-1928930 while P.P. participated in a program hosted by the Mathematical Sciences Research Institute (MSRI) in Berkeley, California, during the
Spring 2022 semester.
}
\subjclass[2010]{Primary 30C65; Secondary 57M12, 30L10}

\begin{document}

\begin{abstract}
We show that, if a closed, connected, and oriented Riemannian $n$-manifold $N$ admits a non-constant quasiregular mapping from the Euclidean $n$-space $\mathbb R^n$, then the de Rham cohomology algebra $H_{\dR}^*(N)$ of $N$ embeds into the exterior algebra ${\bigwedge}^*\mathbb R^n$. As a consequence, we obtain a homeomorphic classification of closed simply connected quasiregularly elliptic $4$-manifolds.
\end{abstract}

\maketitle

\section{Introduction}

In this article we prove the following result on the de Rham cohomology of quasiregularly elliptic manifolds.

\begin{theorem}
\label{thm:main}
Let $N$ be a closed quasiregularly elliptic $n$-manifold, $n\ge 2$. Then there exists an embedding of graded algebras $H^*_{\dR}(N) \to \bigwedge^* \R^n$.
\end{theorem}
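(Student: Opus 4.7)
Fix a non-constant quasiregular map $f \colon \R^n \to N$. The idea is to construct the embedding as an asymptotic shadow of the form-level pullback $f^*$. By Hodge theory on the closed Riemannian manifold $N$, every class in $H^k_{\dR}(N)$ is represented by a unique harmonic form; pick a basis $\omega_1,\ldots,\omega_b$ of harmonic forms compatible with the grading. Since $f \in W^{1,n}_{\loc}(\R^n,N)$, each pullback $f^*\omega_i$ is a closed form of degree $k_i$ in $L^{n/k_i}_{\loc}(\R^n, \bigwedge^{k_i}\R^n)$, and the assignment $\omega \mapsto f^*\omega$ is a graded algebra homomorphism at the level of differential forms.

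Next, conformal-energy estimates for quasiregular maps of the type developed by Bonk--Heinonen and refined by Prywes should provide a uniform bound on the averaged $L^{n/k_i}$-norm of $f^*\omega_i$ over large balls $B(0,r) \subset \R^n$, namely
\[
r^{-n}\int_{B(0,r)} |f^*\omega_i|^{n/k_i} \, dx \leq C,
\]
valid for all $r$ large, with $C$ independent of $r$. Applying the rescalings $y \mapsto f(r_j y)$ for an appropriate sequence $r_j \to \infty$ and passing to a weak subsequential limit simultaneously for every basis class, one obtains forms $\Omega_i \in L^{n/k_i}_{\loc}(\R^n, \bigwedge^{k_i})$ which remain closed. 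A Liouville-type argument for closed, co-closed $L^p$-forms on $\R^n$ of bounded averaged energy then forces each $\Omega_i$ to be \emph{constant}, that is an element of $\bigwedge^{k_i}\R^n$.

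Define $\Phi \colon H^*_{\dR}(N) \to \bigwedge^*\R^n$ by $[\omega_i] \mapsto \Omega_i$ and extend linearly. To upgrade $\Phi$ to an algebra homomorphism, the plan is to pass the identity $f^*(\omega_i \wedge \omega_j) = f^*\omega_i \wedge f^*\omega_j$ through the weak limit; this step relies on a div--curl or compensated-compactness argument in the style of Coifman--Lions--Meyer--Semmes, which applies because each factor is closed and the degrees $k_i,k_j$ force the integrability exponents into complementary H\"older pairings. Injectivity of $\Phi$ should follow from a matching lower bound on the averaged $L^{n/k_i}$-energy of $f^*\omega_i$ for non-trivial classes, of the kind extracted from harmonicity of $\omega_i$, non-degeneracy of $f$, and the bounded distortion inequality; by Poincar\'e duality, this reduces to controlling top-degree pullbacks, which already encode the degree of $f$ on balls.

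The main obstacle I expect is the simultaneous selection of scales: one must find a single sequence $r_j \to \infty$ along which every basis class produces a non-zero constant limit \emph{and} products of limits coincide with limits of products. Both properties are delicate because the pulled-back forms may oscillate wildly at infinity and because compensated compactness is sensitive to the choice of subsequence. The finite-dimensionality of $H^*_{\dR}(N)$ combined with a Lebesgue-density or Vitali-type selection over scales on which the averaged energies behave regularly should deliver such a sequence, at which point algebra preservation and injectivity can be purchased together.
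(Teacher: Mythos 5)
Your overall architecture — pull back harmonic representatives, normalize, rescale, extract weak limits, recover multiplicativity by a compensated-compactness argument, and get injectivity from Poincar\'e duality in top degree — matches the paper's strategy, and your identification of the product step as the delicate one (the paper handles it via the compactness of the Iwaniec--Lutoborski operator $T$ and the fact that $h(c_1)\wedge h(c_2)$ differs from $h(c_1\wedge c_2)$ by an exact form which dies under the normalization) is on target. However, there are two concrete gaps that would break the argument as written.

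First, the normalization. You bound $r^{-n}\int_{B(0,r)}|f^*\omega_i|^{n/k_i}\,dx$, i.e.\ you normalize by the Lebesgue volume of the ball. A non-constant quasiregular map $\R^n\to N$ can have $\int_{B(0,r)}f^*\vol_N$ growing arbitrarily fast (or as slowly as $r^{\varepsilon}$, which is all Bonk--Heinonen guarantees), so no such uniform bound holds; the distortion inequality only controls $\int_B\norm{Df}^n$ by $K\int_B f^*\vol_N$, not by $|B|$. The correct normalization is by the $f$-volume $A(f)=\int_{B^n}f^*\vol_N$ raised to the power $k/n$, and to transfer the bound from $B_2^n$ back to $B^n$ one needs a doubling condition $\int_{B_2^n}f^*\vol_N\le D\int_{B^n}f^*\vol_N$ along the chosen scales. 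This is exactly what Rickman's Hunting Lemma provides (Proposition \ref{prop:reduction}); without it the limiting top-degree form could have zero mass on $B^n$ (all mass escaping into the annulus), destroying injectivity. Your "Vitali-type selection of scales" gestures at this but the actual selection mechanism is essential, not optional.

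Second, and more seriously, the Liouville step is false. The weak limits of the normalized pullbacks are closed $L^{n/k}$-forms on a ball, but they are not co-closed (pullbacks of harmonic forms under quasiregular maps are not harmonic), so no Liouville theorem forces them to be constant-coefficient — and indeed they are generically non-constant: the paper exhibits (Section \ref{subsec:unique}) quasiregular maps $\R^n\to\bT^n$ whose rescalings produce distinct, rotation-related limit embeddings, which is incompatible with the limits being canonically constant. The paper's resolution is different: the limit is an embedding of graded algebras $L\colon H^*_{\dR}(N)\to\cW^*(B_2^n)$ into a Sobolev algebra of forms, and one then chooses a single point $x_0$ in the positive-measure set where the (pointwise Borel representatives of the) finitely many relations $L(c_{\ell_1}\wedge c_{\ell_2})=L(c_{\ell_1})\wedge L(c_{\ell_2})$ hold and where $L([\vol_N])(x_0)\ne 0$; evaluation at $x_0$ gives the embedding into $\bigwedge^*\R^n$. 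You need to replace the Liouville argument by this (or an equivalent) pointwise evaluation step.
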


A connected and oriented Riemannian $n$-manifold $N$ is called \emph{quasiregularly elliptic} if there exists a non-constant quasiregular mapping $\R^n \to N$. A continuous map $f\colon M\to N$ between oriented Riemannian $n$-manifolds, $n\ge 2$, is \emph{quasiregular} if $f$ belongs to the Sobolev space $W_{\loc}^{1,n}(M,N)$ and there exists a constant $K\ge 1$ for which 
\begin{equation}
\label{eq:QR}
\norm{Df}^n \le K J_f \quad \text{ a.e. in } M;
\end{equation}
here $\norm{Df}$ is the operator norm of the weak differential $Df$ and $J_f$ is the Jacobian determinant of $f$. 

Theorem \ref{thm:main} may be viewed as a quasiconformal analog of a classical result in degree theory: \emph{For a non-zero degree mapping $f\colon M\to N$ between orientable closed manifolds, the induced map $f^* \colon H^*(N) \to H^*(M)$ in cohomology is an embedding of graded algebras.} This analogy can be taken further by observing that, if there exists a piecewise linear branched covering $\bT^n \to N$, which is quasiregular with respect to smooth structures of $\bT^n$ and $N$, then $N$ is a quasiregularly elliptic.

Theorem \ref{thm:main} has an antecedent in Kangasniemi's theorem \cite{Kangasniemi-Adv}: \emph{If a closed, connected, and oriented Riemannian $n$-manifold $N$ admits a non-constant and non-injective uniformly quasiregular mapping $N \to N$, then there exists an embedding of graded algebras $H^*_{\dR}(N) \to \bigwedge^* \R^n$}. Regarding Kangasniemi's theorem, we recall that a quasiregular self-map is \emph{uniformly quasiregular} if the distortion inequality \eqref{eq:QR} holds for all the iterates of the map with the same constant. We also note that such closed manifolds are quasiregularly elliptic; see Martin, Mayer, and Peltonen \cite{Martin-Mayer-Peltonen}. 

In terms of known results on the cohomology of closed quasiregularly elliptic manifolds, Theorem \ref{thm:main} extends a theorem of Prywes \cite{PR}: \emph{Let $N$ be a closed quasiregularly elliptic $n$-manifold. Then $\dim H^k_\dR(N) \le \dim \bigwedge^k \R^n = \binom{n}{k}$}. This dimension bound for the de Rham cohomology theorem is sharp as the $n$-torus $\bT^n$ is quasiregularly elliptic. Prywes's theorem answers a question of Gromov \cite[p.~200]{GromovM:Hypmga} whether there exists closed simply connected manifolds, which are not quasiregularly elliptic. This is indeed the case as, for example, $\#^4(\bS^2\times \bS^2)$ is not quasiregularly elliptic by Prywes's theorem. We refer to Bonk and Heinonen \cite{BonkM:Quamc} for a previous result on distortion dependent bounds on the de Rham cohomology of quasiregularly elliptic manifolds. We also recall a theorem of Peltonen \cite{Peltonen} which shows that connected sums $\bT^n \# N$, where $N$ is not a rational homology sphere, are not quasiregularly elliptic. 

We also note in passing that dimension bounds on the de Rham cohomology in Prywes's theorem do not hold for open quasiregularly elliptic manifolds. Indeed, for any finite set $P\subset \bS^n$ there exists a quasiregular mapping $f\colon \R^n \to \bS^n$ for which $f\R^n = \bS^n \setminus P$; see \cite{DP}. Thus, it is known that the $(n-1)$-cohomology of an open quasiregularly elliptic manifold $N$ may have arbitrarily large finite dimension. The dependence of the dimension of $H_{\dR}^k(N)$ on the distortion constant is not known in general.

\subsection{Classification of closed simply connected quasiregularly elliptic $4$-manifolds}

Our main application of Theorem \ref{thm:main} is a classification of closed simply connected quasiregularly elliptic $4$-manifolds.

In dimensions $n=4m$, Theorem \ref{thm:main} imposes upper bounds for the middle-dimensional Betti numbers $\beta^+_{2m}(N)$ and $\beta^-_{2m}(N)$ given by positive and negative definite subspaces of the intersection form $H^{2m}(N;\R) \times H^{2m}(N;\R) \to \R$ of a closed quasiregularly elliptic manifold $N$. In particular, in dimension $n=4$, both Betti numbers $\beta^+_2(N)$ and $\beta^-_2(N)$ are at most three. Therefore the classification of smooth, closed, simply connected, and oriented $4$-manifolds yields that $N$ is homeomorphic to one of the manifolds
\begin{equation} \label{eq:list}
\#^k (\bS^2\times \bS^2) \quad \text{or} \quad \#^j \C P^2 \#^i \overline{\C P^2},
\end{equation}
where $k,j,i\in \{0,1,2,3\}$ and $\#^0 (\bS^2\times \bS^2)=\#^0 \C P^2 \#^0 \overline{\C P^2}=\bS^4$.

Piergallini and Zuddas show in \cite{PZ} that these manifolds are quasiregularly elliptic. Indeed, by a result Piergallini and Zuddas \cite{PZ}, for the manifolds $N$ in \eqref{eq:list} there exists a piecewise linear branched covering $\bT^4 \to N$, where $\bT^4$ and $N$ have the standard PL structure compatible with the usual smooth structure. Thus composing a such branched covering $\bT^4 \to N$ with a locally isometric Riemannian covering map $\R^4 \to \bT^4$, we obtain the required quasiregular mapping $\R^4 \to N$. 

Combining Theorem \ref{thm:main} with this result of Piergallini and Zuddas we obtain the following classification.

\begin{corollary}
\label{cor:classification}
A closed simply connected $4$-manifold is quasiregularly elliptic if and only if it is homeomorphic to either $\#^k(\bS^2\times \bS^2)$ or $\#^j \C P^2 \#^i \overline{\C P^2}$ for some $k,j,i\in \{0,1,2,3\}$. 
\end{corollary}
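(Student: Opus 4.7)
The plan is to handle the two directions separately. The "if" direction follows directly from the Piergallini--Zuddas construction already indicated in the excerpt: for each $N$ in the list~\eqref{eq:list} one precomposes the PL branched covering $\bT^4 \to N$ of~\cite{PZ} with a Riemannian covering $\R^4 \to \bT^4$ and verifies that this composition is quasiregular with respect to the standard smooth structures.

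For the "only if" direction, let $N$ be a closed simply connected quasiregularly elliptic $4$-manifold. The first step is to derive constraints on the intersection form of $N$. Applying Theorem~\ref{thm:main}, one obtains an embedding of graded algebras $\phi\colon H^*_\dR(N) \to \bigwedge^* \R^4$. Because $\phi$ is unital and injective and $H^4_\dR(N)$ is one-dimensional, $\phi$ restricts to a linear isomorphism on top cohomology, so up to a nonzero global scalar the cup product $H^2_\dR(N) \otimes H^2_\dR(N) \to H^4_\dR(N) \cong \R$ pulls back from the wedge pairing $\bigwedge^2 \R^4 \times \bigwedge^2 \R^4 \to \bigwedge^4 \R^4 \cong \R$. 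A short computation, using for example the self-dual/anti-self-dual decomposition of $\bigwedge^2 \R^4$, shows that this wedge pairing has signature $(3,3)$. Hence $\beta^+_2(N) \le 3$ and $\beta^-_2(N) \le 3$ (with the two possibly swapped if the global scalar is negative).

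The second step is to translate these Betti number bounds into a homeomorphism type. Since $N$ is Riemannian it is smooth, and in particular its Kirby--Siebenmann invariant vanishes. By Donaldson's diagonalization theorem together with the classification of indefinite integral unimodular symmetric bilinear forms, the intersection form of any smooth closed simply connected $4$-manifold is diagonal, hyperbolic, or contains an $E_8$ summand. The bound $\beta_2(N) \le 6$ excludes the $E_8$ case, since $E_8$ has rank $8$. The remaining forms are precisely those of the manifolds in~\eqref{eq:list} with $k,j,i \in \{0,1,2,3\}$, and Freedman's homeomorphism classification of closed simply connected topological $4$-manifolds then identifies $N$ with one of them.

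The only delicate point in this plan is the verification in Step~1 that $\phi$ transports the intersection form onto a subform of the wedge pairing with the expected sign behavior, which requires tracking the identification of $H^4_\dR(N)$ with $\R$ via integration and the sign of $\phi$ on top cohomology. Everything else is a direct application of Theorem~\ref{thm:main}, the construction of Piergallini and Zuddas, Donaldson's theorem, and Freedman's classification.
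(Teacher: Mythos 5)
Your proposal is correct and follows essentially the same route as the paper: the ``if'' direction is exactly the Piergallini--Zuddas branched covering composed with a Riemannian covering $\R^4\to\bT^4$, and the ``only if'' direction is Theorem~\ref{thm:main} combined with the observation (the paper's Lemma~\ref{lem:bounds}) that the embedding transports the intersection form, up to a nonzero scalar, into the wedge pairing on $\bigwedge^2\R^4$ of signature $(3,3)$, giving $\beta_2^{\pm}(N)\le 3$, after which the Donaldson--Freedman--Serre classification yields the list~\eqref{eq:list}. Your self-dual/anti-self-dual computation and the Kirby--Siebenmann remark are just explicit versions of steps the paper leaves implicit.
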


Similar classifications of quasiregularly elliptic manifolds, based on general classification theorems for manifolds, are known in dimensions two and three. In dimension two, the result is classically deduced from uniformization of Riemann surfaces and the Sto\"ilow theorem for quasiregular mappings. In dimension three, the classification follows from Perelman's solution to the Geometrization conjecture and a growth bound for the fundamental group. This reduction of the classification to the Geometrization conjecture is due to Jormakka \cite{JormakkaJ:Quamf3}. We recall briefly these results from the point of view of Prywes's theorem and Theorem \ref{thm:main}.

In dimension $n=2$, a closed quasiregularly elliptic $2$-manifold $N$ satisfies $\dim H^1_{\dR}(N)\le 2$. Since $N$ is oriented, we immediately observe that $N$ is homeomorphic to either $\bS^2$ or $\bT^2$ by classification of orientable closed surfaces.
 
For a closed quasiregularly elliptic $3$-manifold $N$, we deduce from Prywes's theorem that $\dim H^1_\dR(N) = \dim H^2_\dR(N) \le 3$. By elementary algebra, we obtain from Theorem \ref{thm:main} that $\dim H^1_\dR(N) \ne 2$. Thus $N$ has the de Rham cohomology of one of the manifolds $\bS^3$, $\bS^2\times \bS^1$, or $\bT^3$. This homological result is sharp in the sense that, by Jormakka's classification theorem \cite{JormakkaJ:Quamf3}, the only closed quasiregularly elliptic $3$-manifolds are $\bS^3$, $\bS^2\times \bS^1$, $\bT^3$, and their quotients.
We find it interesting that, in fact, all closed quasiregularly elliptic manifold in dimensions two and three admit a branched covering from a torus.

In dimension four, to our knowledge, the only known examples of closed simply connected quasiregularly elliptic $4$-manifolds, before the result of Piergallini and Zuddas, were $\bS^4$, $\bS^2\times \bS^2$, $\#^2(\bS^2\times \bS^2)$, and $\C P^2$; here the first two are given by elementary maps and the latter two are given by constructions of Rickman \cite{RI2} and Luisto--Prywes \cite{Luisto-Prywes}, respectively.

\subsection{Quasiregular mappings from a ball}

Theorem \ref{thm:main} admits a reformulation for quasiregular mappings from a unit ball into closed manifolds. This reformulation in turn yields a cohomological Montel theorem for quasiregular mappings satisfying a doubling condition. For the statements, let $N$ be an oriented Riemannian $n$-manifold, let $K\ge 1$ and $D>1$ be constants, and let $\cF_{K,D}(N)$ be a family of all non-constant quasiregular mappings $f\colon B_2^n \to N$ satisfying the distortion inequality \eqref{eq:QR} with constant $K$ and an additional doubling condition
\[
\int_{B_2^n} f^* \vol_N \le D\int_{B^n} f^* \vol_N.
\]
Here and in what follows, $B^n$ and $B_2^n$ denote the Euclidean balls of radius $1$ and $2$ centered at the origin, respectively. In what follows, we also denote
\[
A(f) = \int_{B^n} f^* \vol_N.
\]

\begin{theorem}
\label{thm:sup}
Let $N$ be a closed, connected, and oriented Riemannian $n$-manifold, $n\ge 2$, for which
\[
\sup_{f\in \cF_{K,D}(N)} A(f) = \infty.
\]
Then there exists an embedding of graded algebras $H^*_{\dR}(N) \to \bigwedge^* \R^n$.
\end{theorem}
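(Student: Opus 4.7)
The plan is to reduce Theorem \ref{thm:sup} to Theorem \ref{thm:main} by extracting from a sequence $(f_k) \subset \cF_{K,D}(N)$ with $A(f_k) \to \infty$ a non-constant $K$-quasiregular map $g \colon \R^n \to N$, thereby showing that $N$ is quasiregularly elliptic.

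The key step is to argue that $\cF_{K,D}(N)$ cannot be a normal family on $B^n$. From the distortion bound $\|Df_k\|^n \le K J_{f_k}$ one has $\int_{B^n} \|Df_k\|^n \ge A(f_k) \to \infty$, and the doubling condition provides the companion control $\int_{B_2^n} \|Df_k\|^n \le KD \cdot A(f_k)$ on the surrounding ball. I would rule out normality by contradiction: if $(f_k)$ were locally equicontinuous on $B_2^n$, then along a subsequence $f_k \to f$ locally uniformly, with $f$ either constant or $K$-quasiregular by a Reshetnyak-type convergence theorem; combining this with the weak continuity of the Jacobian for quasiregular mappings (where the doubling is used to keep the measures $J_{f_k}\dx$ uniformly integrable across $\partial B^n$) would force $A(f_k) \to A(f) < \infty$, contradicting $A(f_k) \to \infty$.

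Given non-normality, I would then invoke Miniowitz's generalization of Zalcman's rescaling lemma to $K$-quasiregular mappings into a compact Riemannian manifold. This yields points $x_k \in B^n$ and scales $\rho_k \to 0$ such that, after passing to a subsequence, the rescaled mappings $g_k(y) = f_k(x_k + \rho_k y)$, defined on balls $B_{R_k}^n$ with $R_k \to \infty$, converge locally uniformly on $\R^n$ to a non-constant $K$-quasiregular map $g \colon \R^n \to N$. Theorem \ref{thm:main} applied to $g$ then yields the desired embedding $H^*_{\dR}(N) \to \bigwedge^* \R^n$.

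The principal obstacle is making the first step rigorous, since one has to convert the integral blow-up $\int_{B^n} \|Df_k\|^n \to \infty$ into an actual failure of equicontinuity that is compatible with the Miniowitz--Zalcman rescaling. The doubling condition is essential here, since it prevents the $L^n$-energy of $Df_k$ from hiding by concentrating in the shell $B_2^n \setminus B^n$, and thus ensures that the blow-up manifests as an unbounded local oscillation of $(f_k)$ already on $B^n$, which is precisely what the weak-continuity argument requires.
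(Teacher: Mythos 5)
Your proposal is circular relative to the logical structure of the paper, and this is a genuine gap rather than a stylistic difference. You reduce Theorem \ref{thm:sup} to Theorem \ref{thm:main} by rescaling a non-normal sequence in $\cF_{K,D}(N)$ to produce a non-constant $K$-quasiregular map $g\colon \R^n \to N$. But Theorem \ref{thm:main} is itself deduced \emph{from} Theorem \ref{thm:sup}: the reduction in Section \ref{sec:reduction} uses the Bonk--Heinonen growth theorem and Rickman's Hunting Lemma to turn a quasiregular map $\R^n \to N$ into a sequence in $\cF_{K,D}(N)$ with $A(f)\to\infty$, and then invokes Theorem \ref{thm:sup}. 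There is no independent proof of Theorem \ref{thm:main} to appeal to, so your argument defers all of the content to a statement whose only proof passes through the very theorem you are proving. The actual proof is the analytic core of the paper: by Lemma \ref{lem:normalized-is-bounded} and Banach--Alaoglu one extracts a weak subsequential limit $L$ of the normalized pull-backs $f_j^\#$ on the finite-dimensional space $\oplus_{k=1}^{n-1}H^k_{\dR}(N)$; Theorem \ref{thm:limit} (via Sobolev--Poincar\'e limits, Lemma \ref{lem:limit-of-exact}, and the vague convergence of the normalized measures) upgrades $L$ to an embedding of graded algebras $H^*_{\dR}(N)\to \cW^*(B_2^n)$ with $\int_{B^n}L([\vol_N])=1$; and finally one evaluates fixed Borel representatives at a point $x_0$ of a positive-measure set where $L([\vol_N])(x_0)\neq 0$ and all the multiplicativity identities hold pointwise, landing in $\bigwedge^*\R^n$. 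None of this work appears in your proposal.

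Two secondary points. First, your non-normality step is misargued even on its own terms: weak continuity of Jacobians under local uniform convergence requires exactly the uniform local energy bounds you are denying, and when the locally uniform limit is constant the Jacobian measures need not converge to zero (this is the bubbling phenomenon handled in Section \ref{sec:measures}). The correct route from equicontinuity on $\bar B^n$ to $\sup_k A(f_k)<\infty$ is a Caccioppoli-type oscillation estimate on small balls mapped into bi-Lipschitz charts of $N$; the doubling condition plays no role there. Second, even granting a Miniowitz--Zalcman rescaling for closed targets, its output only re-establishes the hypothesis of Theorem \ref{thm:main}; it produces no embedding without the limit-operator machinery above.
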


We conclude that, if $H^*_\dR(N)$ is not a subalgebra of $\bigwedge^* \R^n$, there exists a uniform upper bound for the area $A(f)$, and hence also for the energy, of mappings $f$ in $\cF_{K,D}(N)$. This uniform upper bound replaces the degree bound in the Gromov's compactness theorem for quasiregular maps \cite{Pankka-Souto} and, as a consequence of these two results, we obtain the following cohomological version of Montel's theorem for quasiregular maps in $\cF_{K,D}(N)$; see Rickman \cite[Corollary IV.3.14]{RI1} for Rickman's Montel theorem for quasiregular mappings without an additional doubling assumption.

\begin{corollary}
\label{cor:sup}
Let $N$ be a closed, connected, and oriented Riemannian $n$-manifold, $n\ge 2$. If $H^*_{\dR}(N)$ is not a subalgebra of $\bigwedge^* \R^n$, then $\overline{\cF_{K,D}(N)}$ is compact with respect to the topology of local uniform convergence. In particular, in this case, $\cF_ {K,D}(N)$ is a normal family.
\end{corollary}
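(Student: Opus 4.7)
The plan is to combine the contrapositive of Theorem \ref{thm:sup} with the Gromov-type compactness theorem for quasiregular maps due to Pankka and Souto \cite{Pankka-Souto}. Since $H_{\dR}^*(N)$ is assumed not to embed into $\bigwedge^* \R^n$, Theorem \ref{thm:sup} immediately yields a finite constant $M$ such that $A(f) \le M$ for every $f \in \cF_{K,D}(N)$.

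I would then promote this volume bound on $B^n$ to a uniform $n$-energy bound on the larger ball $B_2^n$. The doubling condition built into the definition of $\cF_{K,D}(N)$ gives $\int_{B_2^n} f^*\vol_N \le DM$, and the pointwise distortion inequality $\|Df\|^n \le K J_f$ together with the integral identity $\int f^*\vol_N = \int J_f \dx$ then provide $\int_{B_2^n} \|Df\|^n \dx \le KDM$. Hence $\cF_{K,D}(N)$ is uniformly bounded in $n$-energy on the whole of $B_2^n$, and in particular on every compact subset of $B_2^n$.

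Finally, I would invoke the compactness theorem of Pankka and Souto, which asserts that a family of $K$-quasiregular maps into a closed Riemannian manifold with uniformly bounded $n$-energy on compact subsets is pre-compact in the topology of local uniform convergence. Applying this on an exhaustion of $B_2^n$ by closed subballs $\overline{B_r^n}$ with $r \nearrow 2$, together with a standard diagonal argument, yields compactness of $\overline{\cF_{K,D}(N)}$ in the topology of local uniform convergence; normality of $\cF_{K,D}(N)$ is then immediate.

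The main obstacle I would expect is mostly bookkeeping: ensuring that the Pankka--Souto theorem is applied with the correct energy bound on the correct domain, namely the slightly larger ball $B_2^n$ rather than $B^n$. The doubling hypothesis in the definition of $\cF_{K,D}(N)$ is tailored precisely to supply this, so once the area bound from Theorem \ref{thm:sup} is in hand, the remaining steps reduce to a direct chain of inequalities and an invocation of the compactness theorem on compact subdomains.
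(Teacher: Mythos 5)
Your first step is exactly the paper's: the contrapositive of Theorem \ref{thm:sup} gives $\sup_{f\in\cF_{K,D}(N)}A(f)<\infty$, and the doubling condition plus the distortion inequality upgrade this to a uniform $n$-energy bound on $B_2^n$. The gap is in your final step: the compactness theorem of Pankka and Souto does \emph{not} assert that a uniformly energy-bounded family of $K$-quasiregular maps is pre-compact in the topology of local uniform convergence. What it gives (Theorem \ref{thm:bubbling} in the paper) is subsequential convergence to a quasiregular map defined on a \emph{nodal} manifold whose main stratum is $B_2^n$ and whose other strata are spheres; energy may concentrate at finitely many points and escape into sphere bubbles, in which case local uniform convergence on $B_2^n$ fails. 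A concrete counterexample to the statement you invoke: take $N=\bS^n$ and $f_j(x)=\iota(jx)$ with $\iota$ the stereographic projection. These are $1$-quasiregular, lie in $\cF_{1,D}(\bS^n)$ for large $j$, have energy bounded by $\vol(\bS^n)$, yet converge to a constant away from $0$ while covering all of $\bS^n$ near $0$, so no subsequence converges uniformly near the origin.

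The missing idea is to rule out the bubbles, and this is where the cohomological hypothesis enters a second time. Since $H^*_{\dR}(N)$ is not a subalgebra of $\bigwedge^*\R^n$, the manifold $N$ cannot be a rational homology sphere (otherwise $H^*_{\dR}(N)$ embeds as the subalgebra generated by $1$ and $dx_1\wedge\cdots\wedge dx_n$). If a sphere bubble $S\cong\bS^n$ carried a non-constant limit, the restriction $f|_S\colon S\to N$ would be a non-constant quasiregular map from $\bS^n$, hence of positive degree; but Poincar\'e duality forces $\deg(f|_S)=0$ when $N$ has non-trivial cohomology in some intermediate degree. So all bubbles are constant, the singular points contribute nothing, and the subsequence in fact converges locally uniformly on $B_2^n$. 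This is precisely the content of Theorem \ref{thm:bubbling-corollary}, which is the step your argument skips; without it the claimed normality does not follow from the energy bound alone.
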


We finish this introduction by stating a more general result on weak limits of normalized Jacobians of mappings in $\cF_{K,D}(N)$. This statement may be viewed as non-dynamical version of Kangasniemi's absolute continuity theorem for invariant measures for uniformly quasiregular mappings \cite{Kangasniemi-AJM}. For the statement, for each $f\in \cF_{K,D}(N)$, let $\nu_f$ be the measure
\[
\nu_f = \frac{1}{A(f)} f^*\vol_N.
\]
We call $\nu_f$ the \emph{normalized measure of $f$}. Note that, clearly, $\nu_f \ll \vol_{\R^n}$.

\newcommand{\weakto}{\rightharpoonup}

For a closed manifold $N$, which is not a rational homology sphere, we have the following dichotomy for sequences in $\cF_{K,D}(N)$; for the statement, note that a sequence in $\cF_{K,D}(N)$ need not have a locally uniformly converging subsequence.

\begin{theorem}
\label{thm:measures}
Let $N$ be a closed, connected, and oriented Riemannian $n$-manifold, $n \ge 2$, which is not a rational homology sphere. 
Then a sequence $(f_j)$ in $\cF_{K,D}(N)$ has a subsequence $(f_{j_i})$ for which either $(f_{j_i})$ converges to a constant map or the normalized measures $\nu_{f_{j_i}}$ converge vaguely to a measure $\nu \ll \vol_{\R^n}$ in $B_2^n$.
\end{theorem}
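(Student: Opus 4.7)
The plan is a higher-integrability plus weak-compactness argument, in the spirit of Kangasniemi's absolute continuity theorem \cite{Kangasniemi-AJM} but for the parametric family $\cF_{K,D}(N)$ rather than iterates of a uniformly quasiregular self-map. The doubling condition built into $\cF_{K,D}(N)$ is exactly what one needs for the Bojarski--Iwaniec higher integrability of Jacobians of $K$-quasiregular maps to deliver a \emph{uniform} $L^{1+\epsilon}_\loc$-bound on the normalized densities $g_f := J_f/A(f)$ for $f\in \cF_{K,D}(N)$.

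First I would invoke the reverse H\"older inequality: there exist $\epsilon = \epsilon(n,K) > 0$ and $C = C(n,K)$ such that every $f\in \cF_{K,D}(N)$ satisfies
\[
\left(\vint_B J_f^{1+\epsilon}\right)^{1/(1+\epsilon)} \le C \vint_{2B} J_f
\]
for every Euclidean ball $B$ with $2B\subset B_2^n$. The version for a Riemannian target reduces to the classical Euclidean statement via a finite atlas on the compact manifold $N$. Combining with the doubling bound $\int_{B_2^n} J_f \le D A(f)$ then yields
\[
\int_B g_f^{1+\epsilon}\,dm \le C'D^{1+\epsilon}|B|^{-\epsilon},
\]
with $C' = C'(n,K)$ independent of $f$. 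Consequently $(g_{f_j})$ is bounded in $L^{1+\epsilon}(K)$ for every compact $K\subset B_2^n$.

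Exhausting $B_2^n$ by compact sets $K_m$, I would then apply Banach--Alaoglu and a diagonal extraction to produce a subsequence $(f_{j_i})$ and a non-negative $g\in L^{1+\epsilon}_\loc(B_2^n)$ with $g_{f_{j_i}} \weakto g$ weakly in each $L^{1+\epsilon}(K_m)$. Testing against $C_c(B_2^n)$ then gives vague convergence $\nu_{f_{j_i}} \to \nu := g\,\vol_{\R^n}$ in $B_2^n$, with $\nu \ll \vol_{\R^n}$ by construction. To verify we land in the second alternative rather than needing the constant-map alternative, note that $\nu_{f_{j_i}}(\overline{B^n}) = 1$ for every $i$; upper semicontinuity of mass on compacta for vague convergence of positive Radon measures then forces $\nu(\overline{B^n}) \ge 1$, so $\nu$ is a non-trivial absolutely continuous measure.

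The main technical hurdle is to ensure that the higher-integrability constants are uniform across $\cF_{K,D}(N)$ in the Riemannian target setting; this is morally classical but needs care in patching Euclidean estimates using compactness of $N$. The hypothesis that $N$ is not a rational homology sphere does not appear to enter the absolute continuity argument above, and I would expect it to be used by the authors to interface with Theorem~\ref{thm:sup} --- for instance, by pre-processing the case $A(f_{j_i})\to \infty$ with control of the normalized pullbacks $A(f_j)^{-k/n} f_j^*\omega$ of a non-zero harmonic $k$-form $\omega$ on $N$ with $0<k<n$ via Prywes's $L^{n/k}$-bound \cite{PR}.
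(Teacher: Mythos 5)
There is a genuine and fatal gap: the uniform reverse H\"older inequality you invoke at the outset is false for quasiregular mappings into a \emph{closed} manifold. The Gehring--Bojarski--Iwaniec higher integrability of the Jacobian is a statement about mappings into $\R^n$; its proof rests on a Caccioppoli-type inequality $\int\eta^n\norm{Df}^n\le C\int|f-c|^n|\nabla\eta|^n$ that exploits the unboundedness of the target, and the proposed reduction ``via a finite atlas'' does not go through because the image $f(2B)$ of a small ball need not lie in any chart and the local multiplicity of $f$ on $B$ is unbounded over the family $\cF_{K,D}(N)$. Your own conclusion shows the estimate cannot hold: if the normalized densities $J_f/A(f)$ were uniformly in $L^{1+\epsilon}$ on compacta with constants depending only on $n,K,D$, then \emph{every} vague limit of normalized measures would be absolutely continuous for \emph{every} closed target --- and, as you observe, your argument never uses that $N$ is not a rational homology sphere. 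But Section \ref{sec:example} of the paper exhibits maps $f_j\in\cF_{K,3}(\bS^n)$ whose normalized measures converge vaguely to the singular measure $\tfrac12\cH^1\llcorner J$. Quantitatively, those maps concentrate a fixed amount of mass $\approx\vol(\bS^n)$ on each ball $B_{ij}$ of radius $1/j$; testing the claimed inequality on intermediate balls $B(a_{ij},\rho)$ with $1/j\ll\rho$ (which contain $\approx j\rho$ of the small balls) leads to the requirement $(j\rho)^{(n-1)\epsilon}\le C$, which fails as $j\to\infty$. The missed sanity check --- the stated sharpness of the theorem for $N=\bS^n$ --- should have been the warning sign.

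The actual proof is organized precisely around the concentration phenomenon your estimate would have excluded, and the non-RHS hypothesis is load-bearing in both branches. One splits according to whether $\sup_jA(f_j)$ is finite. If it is infinite, one extracts a Sobolev--Poincar\'e limit $\widehat L$ of the normalized pull-backs and identifies the vague limit of $\nu_{f_j}$ with the $L^1$-form $\frac{\vol(N)}{\int_Nc\wedge c'}\,d\widehat L(c)\wedge d\widehat L(c')$ (Theorem \ref{thm:limit-measure}); here Poincar\'e duality on $N$ supplies classes with $\int_Nc\wedge c'\ne0$, which is exactly where the hypothesis enters. If $\sup_jA(f_j)<\infty$, one applies the Gromov compactness/bubbling theorem (Theorems \ref{thm:bubbling} and \ref{thm:bubbling-corollary}): since $N$ is not a rational homology sphere, every spherical bubble maps with degree zero and is therefore constant, so the subsequence converges locally uniformly on $B_2^n$, and the weak convergence of Jacobians yields an absolutely continuous limit measure unless the limit map is constant. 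That constant alternative is a genuine possibility and cannot be discarded, contrary to your closing paragraph.
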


Theorem \ref{thm:measures} is sharp in the sense that, for $N=\bS^n$, there exists a sequence of mappings $(f_j \colon B_2^n \to \bS^n)$ for which the sequence $(\nu_{f_j})$ of normalized measures converges vaguely to a measure $\nu$, which is singular with respect to the Lebesgue measure. Recall that measures $(\nu_{f_j})$ \emph{converge vaguely to $\nu$} if $\int \varphi \nu_{f_j} \to \int \varphi \nu$ for each $\varphi \in C_0(B_2^n)$.

\begin{remark}
Theorem \ref{thm:measures} holds also for covering spaces of closed manifolds which are not rational homology spheres. In particular, Theorem \ref{thm:measures} holds for quasiregular mappings in $\cF_{K,D}(\R^n)$.
\end{remark}

\subsection{Idea of the proof of Theorem \ref{thm:sup}}

Since Theorem \ref{thm:main} reduces to Theorem \ref{thm:sup}, we discuss now the ideas in terms of Theorem \ref{thm:sup}. To that end, let $N$ be a closed, connected, and oriented Riemannian $n$-manifold and let 
\[
h \colon H^*_\dR(N) \to \Omega^*(N), \quad c\mapsto h_c,
\]
be the mapping which associates to each de Rham class $c\in H^*_\dR(N)$ its unique harmonic representative $h_c \in c$. The mapping $h$ is typically only a graded linear mapping and not an algebra homomorphism, since harmonic forms typically do not form an algebra. We refer to Kotschick \cite{Kotschick} for a discussion on so-called \emph{formal Riemannian manifolds}, whose harmonic forms form an algebra, and to Kangasniemi \cite{Kangasniemi-Adv} for the role of formal measurable Riemannian metrics in the uniformly quasiregular dynamics.

We define, for each $f\in \cF_{K,D}(N)$, a normalized pull-back 
\[
f^\# \colon H^*_\dR(N) \to \cW^*(B_2^n), \quad H^k_{\dR}(N)\ni c\mapsto A(f)^{-\frac{k}{n}}f^*(h_c),
\]
where $A(f)^{-\frac{k}{n}}=1$ for $k=0$ and $\cW^*(B_2^n)$ is the graded algebra
\[
\cW^k(B_2^n) = \left\{ \begin{array}{ll}
W^{d,\frac{n}{k}}(B_2^n; \bigwedge^k \R^n), & \text{for } 0<k\le n \\
\ker (d \colon \Omega^0(B_2^n) \to \Omega^1(B_2^n)), & \text{for } k =0
\end{array}\right.
\]
of Sobolev forms on $B_2^n$. In particular, $\cW^0(B_2^n)$ consists merely of constant functions on $B_2^n$. Here $W^{d,\frac{n}{k}}(B_2^n;\bigwedge^k \R^n)$ is the partial Sobolev space of $\frac{n}{k}$-integrable $k$-forms on $B_2^n$; see Section \ref{sec:prelims} for details.

It is easy to observe that $f^\#$ is an injective graded linear mapping and not an algebra homomorphism unless $h$ is an algebra homomorphism.  Nevertheless, if $(f_j)$ is such a sequence in $\cF_{K,D}(N)$ that $A(f_j) \to \infty$ and that the sequence $(f_j^\#|_{H_{\dR}^k(N)} \colon H_{\dR}^k(N)\to \cW^k(B_2^n))$ converges weakly for each $k=1,\ldots,n-1$, then the limit operators yield an embedding of graded algebras.

\begin{theorem}
\label{thm:limit}
Let $N$ be a closed, connected, and oriented Riemannian $n$-manifold, $n\ge 2$. Let $(f_j)$ be a sequence in $\cF_{K,D}(N)$ for which $A(f_j) \to \infty$ and $f_j^\# \weakto L$, where
\[
f_j^\#, L \colon \bigoplus_{k=1}^{n-1} H^k_{\dR}(N) \to \bigoplus_{k=1}^{n-1} L^\frac{n}{k}(B_2^n; {\bigwedge}^k \R^n).
\]
Then $L$ extends to an embedding of graded algebras 
\[L\colon H^*_\dR(N) \to \cW^*(B_2^n)
\]
satisfying
\[
\int_{B^n} L([\vol_N])=1.
\]
\end{theorem}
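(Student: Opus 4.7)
The plan is to start from the Hodge-theoretic identity
\[
h_{c_1} \wedge h_{c_2} = h_{c_1 \smile c_2} + d\eta
\]
on $N$, valid for some smooth $(k{-}1)$-form $\eta$ (where $k = k_1+k_2$), pull it back by $f_j$, normalize, and pass to the weak limit. Since $f^* d = d f^*$ weakly for $W^{1,n}_{\loc}$ maps, dividing by $A(f_j)^{k/n}$ yields the key identity
\[
f_j^\#(c_1 \smile c_2) = f_j^\#(c_1) \wedge f_j^\#(c_2) - A(f_j)^{-k/n}\, d(f_j^* \eta).
\]
Using the pointwise bound $|f_j^* \eta| \le C \|Df_j\|^{k-1} \|\eta\|_\infty$, the distortion inequality $\|Df_j\|^n \le K J_{f_j}$, and the doubling condition $\int_{B_2^n} J_{f_j} \le D\, A(f_j)$, one obtains $\|f_j^*\eta\|_{L^{n/(k-1)}(B_2^n)} \le C\, A(f_j)^{(k-1)/n}$. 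Hence $\|A(f_j)^{-k/n} f_j^* \eta\|_{L^{n/(k-1)}} \le C\, A(f_j)^{-1/n} \to 0$, and integration by parts against a compactly supported smooth test form shows that $A(f_j)^{-k/n}\, d(f_j^* \eta) \to 0$ distributionally.

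Each $f_j^\#(c_i)$ is closed, being the pullback of the closed harmonic form $h_{c_i}$; passing closedness to the weak limit gives $dL(c_i) = 0$, and hence $L(c_i) \in \cW^{k_i}(B_2^n)$. To identify the limit of the wedge product, I would invoke a compensated-compactness argument: using a Poincaré primitive operator on the star-shaped domain $B_2^n$, write $f_j^\#(c_i) - L(c_i) = d\xi_j^{(i)}$ with $\xi_j^{(i)}$ bounded in a suitable Sobolev space, hence strongly $L^{n/k_i}$-compact by Rellich's theorem. Integrating by parts in a test integral $\int f_j^\#(c_1) \wedge f_j^\#(c_2) \wedge \phi$ for $\phi \in \Omega^{n-k}_c(B_2^n)$ then yields
\[
f_j^\#(c_1) \wedge f_j^\#(c_2) \rightharpoonup L(c_1) \wedge L(c_2) \quad \text{in } L^{n/k}.
\]
Combined with the vanishing of the error term, this establishes the multiplicative relation $L(c_1 \smile c_2) = L(c_1) \wedge L(c_2)$ for positive-degree classes with $k_1 + k_2 \le n-1$.

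To extend $L$ to $H^0_{\dR}(N)$, I set $L(1) = 1$, making multiplicativity with degree-zero components automatic. For $H^n_{\dR}(N) = \R [\vol_N]$, I pass to a subsequence along which the positive measures $\nu_{f_j}$ (bounded in total mass by $D$ on $B_2^n$) converge vaguely to some $\nu$. When $N$ is not a rational homology sphere, Poincaré duality provides classes $c_1, c_2$ of positive degree with $c_1 \smile c_2 = \mu [\vol_N]$, $\mu \ne 0$; since $\vol_N$ is harmonic, so that $h_{[\vol_N]} = \vol_N$, the key identity reduces to
\[
f_j^\#(c_1) \wedge f_j^\#(c_2) = \mu\, \nu_{f_j} - A(f_j)^{-1}\, d(f_j^*\eta),
\]
and passing to distributional limits yields $\nu = \mu^{-1} L(c_1) \wedge L(c_2)$. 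The right-hand side lies in $L^1$ by Hölder, so $\nu$ is absolutely continuous and one may define $L([\vol_N]) := \nu \in L^1$; independence of the subsequence is automatic since the limit is determined by the prescribed $L(c_1)$ and $L(c_2)$. The rational homology sphere case is handled separately: there $H^*_{\dR}(N) = \R \oplus \R [\vol_N]$ has trivial product, and any nonzero $L^1$ top form on $B_2^n$ with unit integral over $B^n$ gives an embedding. The normalization $\int_{B^n} L([\vol_N]) = 1$ follows from $\nu_{f_j}(B^n) = 1$, absolute continuity of $\nu$ (so $\nu(\partial B^n) = 0$), and the Portmanteau-type inequalities $\liminf \nu_{f_j}(B^n) \ge \nu(B^n)$ and $\limsup \nu_{f_j}(\overline{B^n}) \le \nu(\overline{B^n})$ for vague convergence of positive measures. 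Injectivity of $L$ on each graded piece is then a corollary of Poincaré duality: for nonzero $c \in H^k_{\dR}(N)$, pick $c'$ with $c \smile c' = [\vol_N]$, so that $L(c) \wedge L(c') = L([\vol_N]) = \nu \ne 0$, forcing $L(c) \ne 0$.

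The hard part, in my view, will be the compensated-compactness step: passing to the weak limit in a wedge product of closed weakly-convergent Sobolev forms at the borderline Hölder exponents $k_1/n + k_2/n = k/n$. The argument requires careful construction of primitives for closed $L^p$ forms on $B_2^n$ together with Rellich-type compactness, all carried out within the partial Sobolev scale $W^{d,p}$ rather than the standard $W^{1,p}$ scale.
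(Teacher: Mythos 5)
Your proposal is correct and follows essentially the same route as the paper: the error term coming from $d\eta$ is killed by the normalization $A(f_j)^{-1/n}\to 0$ after integration by parts, the wedge products are handled by a div--curl argument using strongly convergent Poincar\'e primitives (the paper uses the compact Iwaniec--Lutoborski operator $T$ on $B_2^n$, which is exactly your ``Poincar\'e primitive plus Rellich'' step), the top degree is defined through Poincar\'e-dual classes and vague limits of the normalized measures, and the normalization and injectivity are obtained as you describe. The step you flag as hard is carried out in the paper precisely as you sketch it.
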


Having Theorem \ref{thm:limit} at our disposal, Theorem \ref{thm:sup} follows by showing that, for each sequence $(f_j)$ satisfying $A(f_j) \to \infty$, there exists a subsequence whose normalized pull-backs converge weakly for each $H_{\dR}^k(N)$, $k=1,\ldots,n-1$, to a linear map $L\colon H_{\dR}^k(N)\to  L^\frac{n}{k}(B_2^n; {\bigwedge}^k \R^n)$. Since these linear maps yield an embedding of algebras $L \colon H_{\dR}^*(N) \to \cW^*(B_2^n)$, it suffices then to observe that there exists a set of positive measure $E\subset B_2^n$ for which the map
\[
H^*_\dR(N) \to {\bigwedge}^* \R^n, \quad c\mapsto (L(c))(x_0),
\]
is a well-defined embedding of graded algebras for $x_0 \in E$.

\subsection{Organization of the article}

This article is organized as follows. 

In Section \ref{sec:reduction}, we give the reduction of Theorem \ref{thm:main} to Theorem \ref{thm:sup}.  After this, in Section \ref{sec:prelims}, we discuss the normalized pull-backs and the Sobolev algebra $\cW^*$. In Sections \ref{sec:potential} and \ref{sec:extension}, we prove a preliminary version of Theorem \ref{thm:limit} which shows that the limit extends to an algebra homomorphism. In Section \ref{sec:monomorphism}, we prove the embedding part of Theorem \ref{thm:limit}. In Section \ref{sec:sup}, we finalize the proof of our main theorem, Theorem \ref{thm:sup}.

In Section \ref{sec:example}, we discuss a sequence of quasiregular mappings $B_2^n \to \bS^n$ showing the sharpness of Theorem \ref{thm:measures}. In Section \ref{sec:measures}, we prove Theorem \ref{thm:measures}. In Appendix \ref{app:classification}, we briefly recall the classification of smooth, closed, simply connected, and oriented $4$-manifolds. Finally, in Appendix \ref{app:PL}, we discuss briefly the quasiregularity of the piecewise linear branched covers $\bT^4 \to N$ of Piergallini and Zuddas.

\subsection{Acknowledgements}
We thank Ilmari Kangasniemi, Jani Onninen, Eden Prywes, and Juan Souto for discussions on these topics over the years. We also thank Eero Hakavuori and Toni Ikonen for a helpful remark at the right time.

\section{Reduction of Theorem \ref{thm:main} to Theorem \ref{thm:sup}}
\label{sec:reduction}

In this section, we discuss the reduction of Theorem \ref{thm:main} to Theorem \ref{thm:sup}. Since both Theorems \ref{thm:main} and \ref{thm:sup} hold trivially if the target manifold is a rational homology sphere, the reduction is based on the following result; see also \cite[Section 4]{PR}.

For the statement, we say that a quasiregularly elliptic manifold $N$ is \emph{$K$-quasiregularly elliptic} if there exists a non-constant quasiregular mapping $\R^n \to N$ satisfying the distortion inequality \eqref{eq:QR} with constant $K$.

\begin{proposition}
\label{prop:reduction}
Let $N$ be a closed $K$-quasiregularly elliptic $n$-manifold, $n\ge 2$, which is not a rational homology sphere. Then there exists a constant $D=D(n)>1$ for which
\[
\sup_{f\in \cF_{K,D}(N)} A(f)=\infty.
\]
\end{proposition}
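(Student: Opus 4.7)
The plan is to produce members of $\cF_{K,D}(N)$ with arbitrarily large area by rescaling the given non-constant $K$-quasiregular map $F \colon \R^n \to N$. Writing $\phi(r) := \int_{B_r^n} F^* \vol_N$, the affine rescaling $f_r(x) := F(rx)$ defines a $K$-quasiregular map $f_r \colon B_2^n \to N$ with $A(f_r) = \phi(r)$ and $\int_{B_2^n} f_r^* \vol_N = \phi(2r)$; hence $f_r \in \cF_{K,D}(N)$ exactly when $\phi(2r) \le D \phi(r)$. The proposition will follow once I exhibit a dimensional constant $D = D(n)$ and a sequence $r_j \to \infty$ for which $\phi(r_j) \to \infty$ and $\phi(2 r_j) \le D \phi(r_j)$.

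First I would check that $\phi(r) \to \infty$. If $\phi$ were bounded, the distortion inequality $\abs{DF}^n \le K J_F$ would give $F$ finite $n$-energy on all of $\R^n$. Since the point at infinity of $\bS^n$ has vanishing conformal $n$-capacity, the Rickman removability theorem, applied to $F$ after an inversion, would extend $F$ to a non-constant $K$-quasiregular map $\tilde F \colon \bS^n \to N$. Such a map between closed, oriented, connected $n$-manifolds has non-zero degree, so $\tilde F^*$ would embed $H^*(N;\Q)$ into $H^*(\bS^n;\Q)$, contradicting the hypothesis that $N$ is not a rational homology sphere.

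The main work is the polynomial volume-growth bound
\[
\phi(r) \le C(F, K, N) \cdot r^n \quad \text{for all } r \ge 1,
\]
with exponent \emph{exactly $n$}, independent of $K$; this uniformity is what forces $D$ to depend only on the dimension. The estimate can be derived from the multiplicity bounds and local injectivity-radius theorems for $K$-quasiregular mappings \cite{RI1} combined with the compactness of $N$; the analogous reduction appears in \cite[Section 4]{PR}. Granting the bound, set $D := 2^{n+1}$. If $\phi(2r) > D \phi(r)$ held for every $r \ge R_0$, iteration would give $\phi(2^k R_0) > D^k \phi(R_0)$ and hence $\phi(r) \gtrsim r^{n+1}$, contradicting the polynomial bound. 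Therefore $\phi(2 r_j) \le D \phi(r_j)$ along some sequence $r_j \to \infty$, and the rescalings $f_{r_j}$ lie in $\cF_{K,D}(N)$ with $A(f_{r_j}) = \phi(r_j) \to \infty$. The main obstacle is the polynomial bound itself; once it is in hand, removability, non-vanishing of the degree, and telescoping are all routine.
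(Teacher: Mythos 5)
Your reduction has the right skeleton (rescale the entire map, find balls on which the doubling condition holds and whose mass tends to infinity), and your first step is essentially sound: the paper establishes $\mu(\R^n)=\int_{\R^n}F^*\vol_N=\infty$ via the Bonk--Heinonen growth theorem (Theorem \ref{thm:fast-growth}) rather than via removability of the point at infinity, but either route serves that purpose. The fatal problem is the step you yourself flag as the main work: the polynomial bound $\phi(r)\le C\,r^n$ is \emph{false} for general quasiregular maps $\R^n\to N$. Quasiregular maps from $\R^n$ into closed manifolds can grow exponentially: a Zorich-type map $\R^n\to \bS^{n-1}\times\bS^1$ (a target which is not a rational homology sphere) has $\phi(r)\asymp e^{cr}$, and already radial stretchings $x\mapsto x|x|^{s-1}$ precomposed with the covering $\R^n\to\bT^n$ give growth $r^{sn}$ with $s$ arbitrarily large, so no exponent depending only on $n$ can work. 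Bonk--Heinonen provide only a \emph{lower} growth bound; there is no upper bound theorem of the kind you invoke.

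Because of this, your restriction to balls centered at the origin cannot be repaired: for an exponentially growing map one has $\phi(2r)/\phi(r)\to\infty$, so \emph{no} sequence of concentric balls with $r_j\to\infty$ satisfies $\phi(2r_j)\le D\phi(r_j)$ for a fixed $D$. The missing idea is that the doubling balls must be allowed to have varying centers and radii. This is exactly what Rickman's Hunting Lemma supplies (in the paper, Lemma \ref{lem:hunting}, the Bonk--Poggi-Corradini formulation): for any atomless Borel measure $\mu$ on $\R^n$ with $\mu(\R^n)=\infty$ and $\mu(B)<\infty$ on balls, there is a dimensional constant $D=D(n)>1$ such that for every $m$ some ball $B=B^n(a_m,r_m)$ satisfies $m\le\mu(2B)\le D\mu(B)$. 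Applying this to $\mu(E)=\int_E F^*\vol_N$ and rescaling by $x\mapsto r_mx+a_m$ gives the desired sequence in $\cF_{K,D}(N)$ with $A(f_m)\to\infty$; no upper growth bound on $F$ is needed anywhere.
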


We deduce Proposition \ref{prop:reduction} from the following two results. The first result is a lower growth bound for quasiregular mappings into manifolds with non-trivial homology due to Bonk and Heinonen \cite{BonkM:Quamc}; see also \cite{Heikkila} for the corresponding result in the setting of quasiregular curves.

\begin{theorem}{\cite[Theorem 1.11]{BonkM:Quamc}}
\label{thm:fast-growth}
Let $N$ be a closed, connected, and oriented Riemannian $n$-manifold, $n\ge 2$, and let $f\colon \R^n \to N$ be a non-constant quasiregular map satisfying the distortion inequality \eqref{eq:QR} with constant $K$. If $N$ is not a rational homology sphere, then there exists a constant $\varepsilon=\varepsilon(n,K)>0$ satisfying
\[
\liminf_{r\to \infty} r^{-\varepsilon} \int_{B^n(r)} f^* \vol_N >0.
\]
\end{theorem}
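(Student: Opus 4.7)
Set $A(r) := \int_{B^n(r)} f^*\vol_N$; the aim is to extract polynomial lower growth of $A$ from the non-triviality of intermediate cohomology of $N$, via a Stokes--Hölder excess estimate on dyadic annuli. My first step is Poincaré duality: since $N$ is closed, oriented, and not a rational homology sphere, there is $k\in\{1,\ldots,n-1\}$ with $H^k_{\dR}(N)\neq 0$, so I fix smooth closed forms $\omega\in\Omega^k(N)$ and $\eta\in\Omega^{n-k}(N)$ with $[\omega\wedge\eta] = [\vol_N]$ in $H^n_{\dR}(N)$; hence $\omega\wedge\eta = \vol_N + d\gamma$ for some smooth $\gamma\in\Omega^{n-1}(N)$. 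Pulling back, $u := f^*\omega$ and $v := f^*\eta$ are closed Sobolev forms on $\R^n$, and the distortion inequality \eqref{eq:QR} forces the quasiregular integrability controls
\[
\int_E |u|^{n/k}\le C_K \int_E J_f, \qquad \int_E |v|^{n/(n-k)}\le C_K \int_E J_f
\]
for every measurable $E\subset \R^n$, with $C_K$ depending only on $n,k,K$ and the sup norms of $\omega,\eta$. Contractibility of $\R^n$ gives $u = d\alpha$, and the Iwaniec--Lutoborski Poincaré--Sobolev inequality for forms yields a gauge choice satisfying $\|\alpha\|_{L^{n/(k-1)}(B^n(r))}\le C(n,k)\,\|u\|_{L^{n/k}(B^n(r))}\le C\,A(r)^{k/n}$ with a scale-invariant constant, since $n/k$ and $n/(k-1)$ are Sobolev conjugate.

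The second step is a dyadic excess inequality. With $\psi$ a standard cutoff on $B^n(2R)$ equal to $1$ on $B^n(R)$ and with $|d\psi|\le C/R$, Stokes' theorem applied to $\psi\,\alpha\wedge v$, combined with $u\wedge v = f^*\vol_N + d(f^*\gamma)$, gives
\[
\int_{\R^n}\psi\, f^*\vol_N \;=\; -\int_{B^n(2R)\setminus B^n(R)} d\psi\wedge \bigl(\alpha\wedge v - f^*\gamma\bigr).
\]
The left-hand side is $\ge A(R)$. On the right I apply Hölder with the exponents $n/(k-1)$ and $n/(n-k+1)$, then trade $\|v\|_{L^{n/(n-k+1)}(\mathrm{ann})}\le |\mathrm{ann}|^{1/n}\|v\|_{L^{n/(n-k)}(\mathrm{ann})}$; crucially, I perform the Poincaré--Sobolev step for $\alpha$ \emph{on the annulus} rather than on the ball, so that only the annular $L^{n/k}$-mass of $u$ enters, and $\int_{\mathrm{ann}}|u|^{n/k}\le C\bigl(A(2R)-A(R)\bigr)$. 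Treating the $f^*\gamma$ term analogously via $|f^*\gamma|\lesssim \|\gamma\|_\infty|Df|^{n-1}$ and $|Df|^{n-1}\le C_K J_f^{(n-1)/n}$, I arrive at a schematic bound
\[
A(R)\le C\,\bigl(A(2R)-A(R)\bigr)^{1/n}\,A(2R)^{(n-1)/n} \;+\; C\,A(2R)^{(n-1)/n}.
\]

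Once $A(R)$ is large the second term becomes lower-order and is absorbed, and the remaining inequality rearranges to $A(2R)-A(R)\ge c\,A(R)^{n}/A(2R)^{n-1}$, which forces $A(2R)\ge (1+c_1)\,A(R)$ for some $c_1 = c_1(n,K)>0$. Dyadic iteration then yields $A(r)\ge c\,r^\varepsilon$ with $\varepsilon = \log_2(1+c_1)>0$, giving the claimed $\liminf$ bound. The main obstacle is the annulus Poincaré--Sobolev step: one must exploit the freedom to add exact forms to $\alpha$ so that its $L^{n/(k-1)}$-norm on $B^n(2R)\setminus B^n(R)$ is controlled by the $L^{n/k}$-norm of $u$ on the same annulus with a dimensionless constant; without this gauge choice the estimate would only produce $\|\alpha\|_{L^{n/(k-1)}(\mathrm{ann})}\le C\,A(2R)^{k/n}$, destroying the excess structure and giving only the trivial monotonicity bound $A(R)\le C\,A(2R)$. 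A secondary technical point is verifying $A(r)\to\infty$ for non-constant quasiregular $f$ so as to enter the regime where the iteration takes over; this follows from the open-mapping property and a standard counting-function argument.
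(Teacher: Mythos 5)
The paper does not prove this statement at all: it is quoted from Bonk--Heinonen \cite{BonkM:Quamc} and used as a black box in Proposition \ref{prop:reduction}, so there is no internal proof to compare with; what follows assesses your sketch on its own terms.

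Your dyadic Stokes--H\"older scheme does produce the doubling inequality you want, but your diagnosis of where the excess comes from is off. Since $d\psi$ is supported in the annulus, the $v$-factor in your H\"older estimate is automatically an annular quantity, bounded by $C\,(A(2R)-A(R))^{(n-k)/n}$ via $\abs{f^*\eta}^{n/(n-k)}\le C_K J_f$. Hence even the crude ball bound $\norm{\alpha}_{L^{n/(k-1)}(B^n(2R))}\le C\,A(2R)^{k/n}$ from Iwaniec--Lutoborski already gives $A(R)\le C\,A(2R)^{k/n}(A(2R)-A(R))^{(n-k)/n}+C\,(A(2R)-A(R))^{(n-1)/n}$, which rearranges to $A(2R)\ge (1+c)A(R)$ once $A(R)$ is large; the annular gauge for $\alpha$ that you single out as the main obstacle is unnecessary, and your claim that without it one only gets the trivial bound $A(R)\le C\,A(2R)$ is incorrect. (If you insist on the gauge, it can be arranged for $2\le k\le n-1$: solve $d\alpha=u$ on the annulus with a scale-invariant Sobolev--Poincar\'e bound --- note the annulus is not convex, so this is not literally Iwaniec--Lutoborski --- and correct to a global primitive on $B^n(2R)$ using that $H^{k-1}$ of the annulus vanishes and that the periods of $u$ vanish because $u$ is closed on the full ball.)

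The genuine gaps are elsewhere. First, the theorem asserts $\varepsilon=\varepsilon(n,K)$, independent of $N$, whereas your constants contain $\norm{\omega}_\infty$, $\norm{\eta}_\infty$, $\norm{\gamma}_\infty$ and $\vol(N)$; consequently your doubling constant $c_1$, and with it $\varepsilon=\log_2(1+c_1)$, depends on $N$ and on the chosen cocycles. Obtaining an exponent depending only on $(n,K)$ is precisely the hard quantitative content of Bonk--Heinonen's theorem and is not reached by this argument (for the present paper's application the weaker, $N$-dependent version would in fact suffice, since Proposition \ref{prop:reduction} only needs $\int_{\R^n}f^*\vol_N=\infty$). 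Second, the exponent bookkeeping collapses when the primitive has degree zero: for $k=1$ your H\"older pairing requires $\alpha\in L^\infty$, while $W^{1,n}$ does not embed in $L^\infty$. For $n\ge 3$ one can dodge this by integrating by parts on the $(n-k)$-form factor instead, but for $n=2$ (where necessarily $k=1$) both options are borderline and the scheme fails; that case needs a separate classical argument. Finally, the fact that $A(r)\to\infty$ for a non-constant quasiregular map into a closed manifold is doing real work in your absorption step and should be justified or referenced rather than waved at.
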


The second result is a version of Rickman's Hunting Lemma \cite[Lemma 5.1]{RI3}. The following formulation is due to Bonk and Poggi-Corradini \cite{BP}.

\begin{lemma}{\cite[Lemma 2.1]{BP}}
\label{lem:hunting}
Let $\mu$ be an atomless Borel measure on $\R^n$ satisfying $\mu(\R^n)=\infty$ and $\mu(B)<\infty$ for every ball $B\subset \R^n$. Then there exists a constant $D=D(n)>1$ with the property that, for every $m\in \N$, there exists a ball $B\subset \R^n$ for which
\[
m\le \mu(2B)\le D\mu (B).
\]
\end{lemma}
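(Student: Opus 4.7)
My plan is to prove the lemma by contradiction, iteratively constructing balls of shrinking radii but growing $\mu$-mass that all lie in a fixed bounded region, contradicting the local finiteness assumption $\mu(B)<\infty$ for all balls $B$.

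First I fix the relevant dimensional covering constant: let $N = N(n)$ be the least integer with the property that every ball $B(x,2r) \subset \R^n$ can be covered by $N$ balls of radius $r/2$ with centers in $B(x,2r)$; such an $N$ exists by a standard volume comparison. I then take $D := 2N$. Suppose for contradiction that for some $m \in \N$ no ball $B\subset \R^n$ satisfies $m \le \mu(2B) \le D\mu(B)$; equivalently, every ball $B$ satisfies either $\mu(2B) < m$ or $\mu(2B) > D\mu(B)$. Since $\mu(\R^n) = \infty$, pick an initial ball $B_0 = B(x_0, r_0)$ with $\mu(B_0) \ge m$.

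I now build a nested sequence $B_k = B(x_k, r_k)$ inductively: given $B_k$ with $\mu(B_k) \ge 2^k m \ge m$, we have $\mu(2B_k) \ge \mu(B_k) \ge m$, and hence by the standing assumption $\mu(2B_k) > D\mu(B_k)$. Cover $2B_k$ by $N$ balls of radius $r_k/2$ with centers in $2B_k$; by sub-additivity at least one such ball, call it $B_{k+1} = B(x_{k+1}, r_{k+1})$, satisfies
\[
\mu(B_{k+1}) \ge \frac{\mu(2B_k)}{N} > \frac{D\mu(B_k)}{N} = 2\mu(B_k) \ge 2^{k+1}m,
\]
while by construction $r_{k+1} = r_k/2$ and $|x_{k+1} - x_k| \le 2 r_k$.

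A geometric series estimate then gives $|x_k - x_0| \le \sum_{j=0}^{k-1} 2 r_0\cdot 2^{-j} < 4 r_0$, so $B_k \subset B(x_0, 5 r_0)$ for every $k$. Consequently $\mu(B(x_0, 5 r_0)) \ge \mu(B_k) \ge 2^k m \to \infty$, contradicting $\mu(B(x_0, 5 r_0)) < \infty$. The main point requiring care is the calibration $D = 2 N(n)$: the factor $N$ is the pigeonhole loss incurred when passing from $2B_k$ to a sub-ball of radius $r_k/2$, and a genuine strict improvement by some factor strictly greater than $1$ (here $2$) is needed to make the inductive mass growth outrun the shrinking radii. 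I note in passing that the atomlessness hypothesis is not used in this argument; it would only become relevant if one wished to prescribe $\mu(2B)$ exactly, or if one wanted a cleaner version of the intermediate-value step in choosing $B_0$.
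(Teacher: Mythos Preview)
The paper does not give its own proof of this lemma; it is quoted verbatim from Bonk--Poggi-Corradini \cite{BP} (and goes back to Rickman's original Hunting Lemma \cite{RI3}). Your argument is correct and is essentially the standard proof: assume failure, use a covering/pigeonhole step to pass from $2B_k$ to a sub-ball of radius $r_k/2$ carrying a definite fraction of the mass, and iterate to produce balls of geometrically growing mass trapped inside a fixed bounded region, contradicting local finiteness. Your calibration $D=2N(n)$ is exactly the right bookkeeping, and your remark that atomlessness is not used is also correct---the hypothesis is harmless here and is carried along in \cite{BP} for reasons unrelated to this particular statement.
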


Proposition \ref{prop:reduction} is an immediate consequence of these results.

\begin{proof}[Proof of Proposition \ref{prop:reduction}]
Since $N$ is $K$-quasiregularly elliptic, there exists a non-constant quasiregular map $f\colon \R^n \to N$ satisfying the distortion inequality \eqref{eq:QR} with constant $K$. By Theorem \ref{thm:fast-growth}, the Borel measure $\mu$ on $\R^n$ defined by $\mu(E)=\int_E f^*\vol_N$ satisfies the assumptions of Lemma \ref{lem:hunting}. Hence, there exist a constant $D=D(n)>1$ and a sequence of balls $(B^n(a_m,r_m))$ for which
\[
m\le \int_{B^n(a_m,2r_m)} f^*\vol_N \le D\int_{B^n(a_m,r_m)} f^*\vol_N.
\]
Now the maps $f_m\colon B_2^n\to N$, $x\mapsto f(r_mx+a_m)$, form a sequence in $\cF_{K,D}(N)$ satisfying $A(f_m)\to \infty$. The claim follows.
\end{proof}

\section{Sobolev algebra $\cW^*$ and normalized pull-backs}
\label{sec:prelims}

We begin by recalling briefly the necessary preliminaries on Sobolev differential forms. A $(k+1)$-form $d\omega \in L_{\loc}^1(B_2^n;\bigwedge^{k+1} \R^n)$ is the \emph{weak exterior derivative} of a $k$-form $\omega \in L_{\loc}^1(B_2^n;\bigwedge^k \R^n)$ if
\[
\int_{B_2^n} \omega \wedge d\varphi = (-1)^{k+1} \int_{B_2^n} d\omega \wedge \varphi
\]
for every $\varphi \in \Omega_0^{n-k-1}(B_2^n)$. By \cite[Lemma 3.6]{Iwaniec-Martin}, the exterior derivative commutes with the pullback of a quasiregular map. More precisely, let $N$ be a closed, connected, and oriented Riemannian $n$-manifold and let $f\colon B_2^n\to N$ be a quasiregular map. Then, for $\alpha \in \Omega^k(N)$, the form $f^*(d\alpha) \in L^\frac{n}{k+1}_{\loc}(B_2^n; \bigwedge^{k+1} \R^n)$ is the weak exterior derivative of $f^*\alpha \in L^\frac{n}{k}_{\loc}(B_2^n; \bigwedge^k \R^n)$, i.e., $d(f^*\alpha)=f^*(d\alpha)$; here the integrability of $f^* \alpha$ and $f^*(d\alpha)$ follow from the point-wise inequality
\[
\abs{f^* \beta} \le (\abs{\beta} \circ f) \norm{Df}^\ell
\]
for $\beta \in \Omega^\ell(N)$.

Let $W^{d,\frac{n}{k}}(B_2^n;\bigwedge^k \R^n)$ denote the Sobolev space of forms $\omega$ belonging to $L^\frac{n}{k}(B_2^n;\bigwedge^k \R^n)$ having a weak exterior derivative $d\omega \in L^\frac{n}{k}(B_2^n;\bigwedge^{k+1} \R^n)$. Suppose now that $f\colon B_2^n\to N$ is a quasiregular map with integrable Jacobian, that is, $J_f\in L^1(B_2^n)$. Then, for a closed form $\alpha \in \Omega^k(N)$, the pull-back $f^* \alpha$ belongs to $W^{d,\frac{n}{k}}(B_2^n;\bigwedge^k \R^n)$ since $f^* \alpha$ is weakly closed.

Let $f^! \colon \Omega^*(N)\to \oplus_{k=0}^n L^\frac{n}{k}(B_2^n;\bigwedge^k \R^n)$ be the linear map
\[
\Omega^k(N)\ni \alpha \mapsto A(f)^{-\frac{k}{n}}f^* \alpha,
\]
where $A(f)^{-\frac{k}{n}}=1$ for $k=0$; recall that
\[
A(f) = \int_{B^n} f^* \vol_N.
\]

It is easily seen that $f^!$ commutes with the exterior product and that $f^! \circ h = f^\#$. We also note that $f^!(\vol_N)$ is the normalized measure $\nu_f$ of $f$.

Next, we record an elementary lemma stating that, if $f\in \cF_{K,D}(N)$, then for $k$-forms the operator $f^!$ is bounded from $\Omega^k(N)\to L^\frac{n}{k}(B_2^n; \bigwedge^k \R^n)$ and the operator norm of $f^!$ is bounded by $K$ and $D$.

\begin{lemma}
\label{lem:normalized-is-bounded}
Let $N$ be a closed, connected, and oriented Riemannian $n$-manifold, $n\ge 2$, and let $f\in \cF_{K,D}(N)$. Then
\[
\norm{f^!(\alpha)}_{\frac{n}{k},B_2^n} \le DK\norm{\alpha}_\infty
\]
for every $\alpha \in \Omega^k(N)$ and for $k=0,\ldots,n$.
\end{lemma}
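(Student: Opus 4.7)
The plan is to bound the $L^{n/k}$-norm of $f^!(\alpha)$ pointwise and then integrate, using the distortion inequality to convert the operator norm of $Df$ into the Jacobian, and then invoking the doubling condition to turn the integral of the Jacobian into $A(f)$, which cancels with the normalizing factor in the definition of $f^!$.

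First I would dispose of the boundary cases $k=0$ and $k=n$ separately. For $k=0$, the form $f^!(\alpha)=f^*\alpha=\alpha\circ f$, so $\norm{f^!(\alpha)}_\infty\le\norm{\alpha}_\infty\le DK\norm{\alpha}_\infty$. For $k=n$, writing $\alpha=g\,\vol_N$ with $\norm{g}_\infty=\norm{\alpha}_\infty$, we get $f^*\alpha=(g\circ f)\,f^*\vol_N$, and then the doubling assumption together with $J_f\ge 0$ a.e.\ yields $\norm{f^!(\alpha)}_{1,B_2^n}\le A(f)^{-1}\norm{g}_\infty\int_{B_2^n}f^*\vol_N\le D\norm{\alpha}_\infty$.

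For $1\le k\le n-1$, the main step is to combine the pointwise inequality
\[
\abs{f^*\alpha}\le(\abs{\alpha}\circ f)\,\norm{Df}^k
\]
recalled in the text with the quasiregularity inequality $\norm{Df}^n\le KJ_f$, which in the form $\norm{Df}^k\le K^{k/n}J_f^{k/n}$ gives
\[
\abs{f^!(\alpha)}^{n/k}=A(f)^{-1}\abs{f^*\alpha}^{n/k}\le A(f)^{-1}K\,(\abs{\alpha}\circ f)^{n/k}J_f\le A(f)^{-1}K\norm{\alpha}_\infty^{n/k}J_f.
\]
Integrating over $B_2^n$ and using $\int_{B_2^n}J_f\,dx=\int_{B_2^n}f^*\vol_N\le D\,A(f)$ by the doubling condition defining $\cF_{K,D}(N)$, we obtain
\[
\norm{f^!(\alpha)}_{n/k,B_2^n}^{n/k}\le DK\norm{\alpha}_\infty^{n/k},
\]
and taking the $k/n$-th root gives $\norm{f^!(\alpha)}_{n/k,B_2^n}\le(DK)^{k/n}\norm{\alpha}_\infty\le DK\norm{\alpha}_\infty$, since $DK\ge1$ and $k/n\le1$.

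I do not expect any real obstacle: the argument is a direct pointwise-to-integral estimate, and both the distortion and doubling hypotheses are tailored precisely to make the exponents and the normalizing factor $A(f)^{-k/n}$ match. The only things to be careful about are the boundary exponents $k=0$ and $k=n$ (where the $L^{n/k}$ notation degenerates), and the convention identifying $J_f$ with $f^*\vol_N$ via the Euclidean volume form on $B_2^n$, both handled above.
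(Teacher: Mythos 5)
Your proof is correct and follows essentially the same route as the paper: the pointwise bound $\abs{f^*\alpha}\le\norm{\alpha}_\infty\norm{Df}^k$, the distortion inequality to pass to $J_f$, and the doubling condition to cancel the normalization $A(f)^{-k/n}$. The only cosmetic difference is that you treat $k=n$ separately, whereas the paper runs the same computation uniformly for all $0<k\le n$.
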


\begin{proof}
The claim holds trivially for $\alpha \in \Omega^0(N)$ as $\abs{f^!(\alpha)}(x)=\abs{\alpha(f(x))}$ for each $x\in B_2^n$. Hence, let $0<k\le n$ and $\alpha \in \Omega^k(N)$. Then
\begin{align*}
\norm{f^!(\alpha)}_{\frac{n}{k},B_2^n} &= A(f)^{-\frac{k}{n}} \left( \int_{B_2^n} \abs{f^* \alpha}^\frac{n}{k} \right)^\frac{k}{n} \\
&\le \norm{\alpha}_\infty A(f)^{-\frac{k}{n}} \left( \int_{B_2^n} \norm{Df}^n \right)^\frac{k}{n} \\
&\le \norm{\alpha}_\infty K^\frac{k}{n} A(f)^{-\frac{k}{n}} \left( \int_{B_2^n} f^*\vol_N \right)^\frac{k}{n} \\
&\le \norm{\alpha}_\infty K^\frac{k}{n} D^\frac{k}{n} \le DK\norm{\alpha}_\infty.
\end{align*}
This concludes the proof.
\end{proof}

We finish this section by showing that limits of exact forms are negligible under sequences $(f_j^!)$, if $(f_j)$ is a sequence in $\cF_{K,D}(N)$ satisfying $A(f_j)\to \infty$; see also \cite{PR}. For a similar observation, based on integration by parts, in the context of entire maps and in quasiregular dynamics see \cite{Pankka-GAFA}, \cite{Okuyama-Pankka}, \cite{Kangasniemi-Adv}, and \cite{Heikkila}.

\begin{lemma}
\label{lem:limit-of-exact}
Let $N$ be a closed, connected, and oriented Riemannian $n$-manifold, $n\ge 2$, and let $(f_j)$ be a sequence in $\cF_{K,D}(N)$ satisfying $A(f_j)\to \infty$. Then
\[
\lim_{j\to \infty} \int_{B_2^n} \varphi \wedge f_j^!(d\alpha) = 0
\]
for $\alpha \in \Omega^k(N)$, $\varphi \in \Omega_0^{n-k-1}(B_2^n)$, and $k=0,\ldots,n$.
\end{lemma}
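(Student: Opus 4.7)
The plan is to use the definition of weak exterior derivative (which is itself an integration-by-parts identity) to shift $d$ off $f_j^*\alpha$ and onto the test form $\varphi$, and then to exploit the different normalization exponents for $\alpha$ and $d\alpha$ in order to gain a single power $A(f_j)^{-1/n}$ which tends to zero.

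First, I would dispose of the trivial case $k=n$, where $d\alpha = 0$ since $d\alpha$ would be an $(n+1)$-form on $N$, so the integrand vanishes identically. From now on assume $0\le k< n$. I would then use the fact, recorded before Lemma \ref{lem:normalized-is-bounded}, that $d(f_j^*\alpha) = f_j^*(d\alpha)$ in the weak sense, together with the definition of weak exterior derivative applied to $\omega = f_j^*\alpha \in L^{n/k}_{\loc}$ and the compactly supported test form $\varphi \in \Omega_0^{n-k-1}(B_2^n)$, yielding the identity
\[
\int_{B_2^n} f_j^*\alpha \wedge d\varphi \;=\; (-1)^{k+1} \int_{B_2^n} f_j^*(d\alpha) \wedge \varphi.
\]
After rearranging the wedge factors, this gives $\int_{B_2^n} \varphi \wedge f_j^*(d\alpha) = \pm \int_{B_2^n} d\varphi \wedge f_j^*\alpha$ up to an irrelevant sign.

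Next, I would unwind the normalizations. Since $f_j^!(d\alpha) = A(f_j)^{-(k+1)/n} f_j^*(d\alpha)$ while $f_j^!(\alpha) = A(f_j)^{-k/n} f_j^*\alpha$, the previous identity becomes
\[
\int_{B_2^n} \varphi \wedge f_j^!(d\alpha) \;=\; \pm\, A(f_j)^{-1/n}\! \int_{B_2^n} d\varphi \wedge f_j^!(\alpha).
\]
This is the only place where the mismatch of normalization powers is used, and it is the heart of the argument: the gain is exactly $A(f_j)^{-1/n}$.

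Finally, I would estimate the right-hand side by H\"older's inequality with conjugate exponents $n/k$ and $n/(n-k)$ (interpreting $n/0 = \infty$ when $k=0$), getting
\[
\left| \int_{B_2^n} d\varphi \wedge f_j^!(\alpha) \right| \;\le\; \|d\varphi\|_{n/(n-k), B_2^n}\, \|f_j^!(\alpha)\|_{n/k, B_2^n}.
\]
By Lemma \ref{lem:normalized-is-bounded}, $\|f_j^!(\alpha)\|_{n/k, B_2^n} \le DK\|\alpha\|_\infty$ uniformly in $j$, while $d\varphi$ is smooth and compactly supported so $\|d\varphi\|_{n/(n-k), B_2^n}$ is finite and independent of $j$. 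Since $A(f_j) \to \infty$, the prefactor $A(f_j)^{-1/n} \to 0$, yielding the claimed limit.

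The proof is conceptually short; the only mild obstacle is careful bookkeeping of signs and of the endpoint cases $k=0$ (where $f_j^!(\alpha) = \alpha\circ f_j \in L^\infty$ with no normalization) and $k=n$ (handled trivially). No deeper machinery is needed beyond the weak integration-by-parts identity and the uniform $L^{n/k}$-bound of Lemma \ref{lem:normalized-is-bounded}.
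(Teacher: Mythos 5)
Your proposal is correct and follows essentially the same route as the paper's proof: integrate by parts via the weak exterior derivative, use the mismatch of normalization exponents to extract the factor $A(f_j)^{-1/n}$, and close with H\"older's inequality and the uniform bound of Lemma~\ref{lem:normalized-is-bounded}. The handling of the endpoint cases $k=n$ (trivial) and $k=0$ is also consistent with the paper's reduction to $0\le k<n$.
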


\begin{proof}
We may assume that $0\le k<n$. Let $\alpha \in \Omega^k(N)$ and let $\varphi \in \Omega_0^{n-k-1}(B_2^n)$. By Hölder's inequality and Lemma \ref{lem:normalized-is-bounded}, we obtain
\begin{align*}
&\abs{\int_{B_2^n} \varphi \wedge f_j^!(d\alpha)} \\
&\quad = A(f_j)^{-\frac{k+1}{n}} \abs{\int_{B_2^n} \varphi \wedge f_j^*(d\alpha)} = A(f_j)^{-\frac{k+1}{n}} \abs{\int_{B_2^n} d\varphi \wedge f_j^*\alpha} \\
&\quad = A(f_j)^{-\frac{1}{n}} \abs{\int_{B_2^n} d\varphi \wedge f_j^!(\alpha)} \le \frac{C(n)}{A(f_j)^\frac{1}{n}} \int_{B_2^n} \abs{d\varphi} \abs{f_j^!(\alpha)} \\
&\quad \le \frac{C(n)}{A(f_j)^\frac{1}{n}} \norm{d\varphi}_{\frac{n}{n-k},B_2^n} \norm{f_j^!(\alpha)}_{\frac{n}{k},B_2^n} \\
&\quad \le \frac{C(n)}{A(f_j)^\frac{1}{n}} \norm{d\varphi}_{\frac{n}{n-k},B_2^n} DK\norm{\alpha}_\infty.
\end{align*}
The claim follows.
\end{proof}

\section{Limits of normalized pull-backs are Sobolev-Poincaré limits}
\label{sec:potential}

%Sobolev--Poincaré limit

We begin by recalling the Poincaré operator of Iwaniec and Lutoborski \cite[Section 4]{IL}: \emph{There exists a graded compact linear operator
\[
T\colon \bigoplus_{k=1}^{n-1} L^\frac{n}{k}(B_2^n;{\bigwedge}^k \R^n) \to \bigoplus_{k=1}^{n-1} L^\frac{n}{k}(B_2^n;{\bigwedge}^{k-1} \R^n)
\]
satisfying $\mathrm{id}=dT+Td$. In particular, $dT(\omega)=\omega$ for every weakly closed form $\omega \in \oplus_{k=1}^{n-1} L^\frac{n}{k}(B_2^n;\bigwedge^k \R^n)$}.

The following lemma is a reformulation of \cite[Lemma 4.1]{PR}. We recall the proof for the reader's convenience. Recall also that, for a quasiregular map $f\colon B_2^n \to N$,
\[
f^\# \colon H_{\dR}^*(N)\to \cW^*(B_2^n), \; H_{\dR}^k(N) \ni c \mapsto A(f)^{-\frac{k}{n}} f^*(h_c) = (f^! \circ h)(c).
\]

\begin{lemma}
\label{lem:weak-is-exact-poincare}
Let $N$ be a closed, connected, and oriented Riemannian $n$-manifold, $n\ge 2$. Let $(f_j)$ be a sequence in $\cF_{K,D}(N)$ for which $f_j^\# \weakto L$, where
\[
f_j^\#, L \colon \bigoplus_{k=1}^{n-1} H^k_{\dR}(N) \to \bigoplus_{k=1}^{n-1} L^\frac{n}{k}(B_2^n; {\bigwedge}^k \R^n).
\]
Then there exist a subsequence $(f_{j_i})$ and a graded linear operator
\[
\widehat{L} \colon \bigoplus_{k=1}^{n-1} H_{\dR}^k(N) \to \bigoplus_{k=1}^{n-1} W^{d,\frac{n}{k}}(B_2^n;{\bigwedge}^{k-1} \R^n)
\]
satisfying $T(f_{j_i}^\#)\to \widehat{L}$. In particular, $d\widehat{L}=L$.
\end{lemma}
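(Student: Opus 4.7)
The plan is to take $\widehat{L} := T \circ L$ and derive the convergence from compactness of the Poincar\'e operator, using the weak closedness of the normalized pull-backs as the key structural input. Fix $c \in H^k_{\dR}(N)$ with $1 \le k \le n-1$. Since $h_c$ is harmonic, it is in particular closed, and because the weak exterior derivative commutes with pull-back by quasiregular maps (as recorded at the start of Section \ref{sec:prelims}), we have $d f_j^\#(c) = A(f_j)^{-k/n} f_j^*(dh_c) = 0$, so $f_j^\#(c)$ is weakly closed. Applying the Poincar\'e identity $\mathrm{id} = dT + Td$ then gives $f_j^\#(c) = dT(f_j^\#(c))$, i.e., $T(f_j^\#(c))$ is a Sobolev primitive for $f_j^\#(c)$.

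Next I would invoke compactness of $T$. Since $H^*_{\dR}(N)$ is finite-dimensional, the hypothesis $f_j^\# \weakto L$ gives, for every $c$, weak convergence $f_j^\#(c) \weakto L(c)$ in $L^{n/k}(B_2^n; {\bigwedge}^k \R^n)$. Compactness of
\[
T \colon {\bigoplus}_{k=1}^{n-1} L^{n/k}(B_2^n; {\bigwedge}^k \R^n) \to {\bigoplus}_{k=1}^{n-1} L^{n/k}(B_2^n; {\bigwedge}^{k-1} \R^n)
\]
upgrades weak convergence to strong convergence, so $T(f_j^\#(c)) \to T(L(c))$ in $L^{n/k}$; by a standard diagonal argument one can pass to a subsequence $(f_{j_i})$ along which the convergence also holds pointwise a.e. Setting $\widehat{L}(c) := T(L(c))$ defines a graded linear operator into $\bigoplus_{k=1}^{n-1} L^{n/k}(B_2^n; {\bigwedge}^{k-1} \R^n)$.

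It remains to identify the weak exterior derivative of $\widehat{L}(c)$. For $\varphi \in \Omega_0^{n-k}(B_2^n)$, strong $L^{n/k}$ convergence of $T(f_{j_i}^\#(c))$ together with $dT(f_{j_i}^\#(c)) = f_{j_i}^\#(c) \weakto L(c)$ yields
\[
\int_{B_2^n} \widehat{L}(c) \wedge d\varphi = \lim_i \int_{B_2^n} T(f_{j_i}^\#(c)) \wedge d\varphi = (-1)^k \lim_i \int_{B_2^n} f_{j_i}^\#(c) \wedge \varphi = (-1)^k \int_{B_2^n} L(c) \wedge \varphi,
\]
so $d\widehat{L}(c) = L(c)$ in the weak sense and, in particular, $\widehat{L}(c)$ lies in $W^{d, n/k}(B_2^n; {\bigwedge}^{k-1} \R^n)$. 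I do not expect a serious obstacle: the two ingredients (compactness of $T$ from Iwaniec--Lutoborski and weak closedness of $f_j^\#(c)$) are readily available, and the duality computation above is routine once the two convergence modes -- strong for $T(f_{j_i}^\#)$ and weak for $f_{j_i}^\#$ -- are in hand.
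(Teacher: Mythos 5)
Your proof is correct and follows essentially the same route as the paper: weak closedness of $f_j^\#(c)$ plus $\mathrm{id}=dT+Td$ gives $dT(f_j^\#(c))=f_j^\#(c)$, compactness of the Iwaniec--Lutoborski operator gives strong convergence of the potentials, and the duality computation identifies $d\widehat{L}=L$. The only (harmless) variation is that you use the fact that a compact operator maps weakly convergent sequences to norm-convergent ones, which identifies $\widehat{L}=T\circ L$ and gives convergence of the full sequence, whereas the paper only uses relative compactness of the bounded image and therefore passes to a subsequence with an abstract limit.
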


\begin{proof}
Let $c_1,\ldots,c_m$ be a basis of $\oplus_{k=1}^{n-1} H_{\dR}^k(N)$. Since
\[
\norm{f_j^\#(c_\ell)}_{\frac{n}{k_\ell},B_2^n} \le DK\norm{h(c_\ell)}_\infty,
\]
where $c_\ell \in H^{k_\ell}(N)$, for each $\ell=1,\ldots,m$ by Lemma \ref{lem:normalized-is-bounded} and $T$ is a compact operator, there exist a subsequence $(f_{j_i})$ and forms $\tau_1,\ldots,\tau_m \in \oplus_{k=1}^{n-1} L^\frac{n}{k}(B_2^n;\bigwedge^{k-1} \R^n)$ satisfying $T(f_{j_i}^\#(c_\ell))\to \tau_\ell$. Since $dT(f_j^\#(c_\ell))=f_j^\#(c_\ell)$, we have
\begin{align*}
\int_{B_2^n} \tau_\ell \wedge d\varphi &= \lim_{i\to \infty} \int_{B_2^n} T(f_{j_i}^\#(c_\ell)) \wedge d\varphi = (-1)^{k_\ell} \lim_{i\to \infty} \int_{B_2^n} f_j^\#(c_\ell) \wedge \varphi \\
&= (-1)^{k_\ell} \int_{B_2^n} L(c_\ell) \wedge \varphi
\end{align*}
for every $\varphi \in \Omega_0^{n-k_\ell}(B_2^n)$. Hence $d\tau_\ell=L(c_\ell)$. We define $\widehat{L}$ to be the linear map defined by $c_\ell \mapsto \tau_\ell$. The claim follows.
\end{proof}

Motivated by Lemma \ref{lem:weak-is-exact-poincare} we introduce the following notion. Let $N$ be a closed, connected, and oriented Riemannian $n$-manifold, $n\ge 2$, and let $(f_j)$ be a sequence in $\cF_{K,D}(N)$. We say that a graded linear operator
\[
\widehat{L} \colon \bigoplus_{k=1}^{n-1} H_{\dR}^k(N) \to \bigoplus_{k=1}^{n-1} W^{d,\frac{n}{k}}(B_2^n;{\bigwedge}^{k-1} \R^n)
\]
is a \emph{Sobolev--Poincaré limit of $(f_j^\#)$} if $d\widehat{L}$ is the weak limit of the operators $f_j^\#$ and $\widehat{L}$ is the limit of the operators $T(f_j^\#)$.

The weak exterior derivative of a Sobolev--Poincaré limit commutes with the wedge product in a weak sense; see also \cite[Lemma 4.2]{PR}. For the statement, we recall that a sequence $(\mu_j)$ of Radon measures on $B_2^n$ \emph{converges vaguely to a Radon measure $\mu$ on $B_2^n$} if
\[
\lim_{j\to \infty} \int_{B_2^n} \varphi \, \mathrm{d}\mu_j = \int_{B_2^n} \varphi \, \mathrm{d}\mu
\]
for every $\varphi \in C_0(B_2^n)$.

\begin{proposition}
\label{prop:weak-commutation}
Let $N$ be a closed, connected, and oriented Riemannian $n$-manifold, $n\ge 2$. Let $(f_j)$ be a sequence in $\cF_{K,D}(N)$ and let the operators $(f_j^\#)$ have a Sobolev--Poincaré limit $\widehat{L}$. Let also $k_1,k_2 \in \{1,\ldots,n-1\}$ satisfy $k_1+k_2\le n$. Then, for every $c_1 \in H_{\dR}^{k_1}(N)$ and $c_2 \in H_{\dR}^{k_2}(N)$, we have $f_j^\#(c_1) \wedge f_j^\#(c_2) \weakto d\widehat{L}(c_1) \wedge d\widehat{L}(c_2)$. Here, the weak convergence is the usual weak convergence in $L^\frac{n}{k_1+k_2}(B_2^n;\bigwedge^{k_1+k_2} \R^n)$ if $k_1+k_2<n$ and the vague convergence of measures if $k_1+k_2=n$.
\end{proposition}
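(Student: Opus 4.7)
The plan is to rewrite the product $f_j^\#(c_1)\wedge f_j^\#(c_2)$ as an exact form, and then use integration by parts to transfer the exterior derivative onto the smooth test form. This reduces the problem to a strong--weak product: pairing the strongly convergent sequence $T(f_j^\#(c_1))$ against the weakly convergent sequence $f_j^\#(c_2)$. The hypothesis $k_1+k_2\le n$ appears naturally as the duality constraint that makes this pairing well-defined.

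First I would record two closedness facts. Since each harmonic representative $h_{c_i}$ is closed and since the weak exterior derivative commutes with quasiregular pullback (as recalled in Section \ref{sec:prelims}), each form $f_j^\#(c_i)$ is weakly closed; consequently, the weak limit $L(c_2)$ is weakly closed as well. Applying the Iwaniec--Lutoborski identity $dT+Td=\id$ to the closed form $f_j^\#(c_1)$ gives $dT(f_j^\#(c_1))=f_j^\#(c_1)$, and combining this with the Leibniz rule together with the closedness of $f_j^\#(c_2)$ yields the key identity
\[
f_j^\#(c_1)\wedge f_j^\#(c_2) = d\bigl(T(f_j^\#(c_1))\wedge f_j^\#(c_2)\bigr).
\]

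Next I would test against a smooth compactly supported form $\varphi\in\Omega_0^{n-k_1-k_2}(B_2^n)$ (and against $\varphi\in\Omega_0^0(B_2^n)$ when $k_1+k_2=n$) and integrate by parts to obtain
\[
\int_{B_2^n}\varphi\wedge f_j^\#(c_1)\wedge f_j^\#(c_2) = \pm\int_{B_2^n} d\varphi\wedge T(f_j^\#(c_1))\wedge f_j^\#(c_2).
\]
By definition of Sobolev--Poincar\'e limit, $T(f_j^\#(c_1))\to\widehat L(c_1)$ strongly in $L^{n/k_1}(B_2^n;{\bigwedge}^{k_1-1}\R^n)$, while $\norm{f_j^\#(c_2)}_{n/k_2,B_2^n}$ is uniformly bounded by Lemma \ref{lem:normalized-is-bounded}. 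A H\"older estimate therefore lets me replace $T(f_j^\#(c_1))$ by $\widehat L(c_1)$ in the limit with an error tending to zero. The hypothesis $k_1+k_2\le n$ is precisely the inclusion $L^{n/k_1}\hookrightarrow L^{n/(n-k_2)}$ on $\supp\varphi$, which ensures that $d\varphi\wedge\widehat L(c_1)$ lies in the dual $(L^{n/k_2})^*$; the weak convergence $f_j^\#(c_2)\weakto L(c_2)$ then passes to the limit. A second integration by parts, now justified because $L(c_2)$ is also weakly closed, rewrites the limit as
\[
\pm\int_{B_2^n} d\varphi\wedge\widehat L(c_1)\wedge L(c_2) = \int_{B_2^n}\varphi\wedge d\widehat L(c_1)\wedge L(c_2) = \int_{B_2^n}\varphi\wedge L(c_1)\wedge L(c_2).
\]

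When $k_1+k_2<n$, density of smooth compactly supported forms in $L^{n/(n-k_1-k_2)}$ promotes this test-form convergence to weak convergence in $L^{n/(k_1+k_2)}$. When $k_1+k_2=n$, H\"older's inequality yields the uniform total-variation bound $\norm{f_j^\#(c_1)\wedge f_j^\#(c_2)}_{1,B_2^n}\le\norm{f_j^\#(c_1)}_{n/k_1,B_2^n}\norm{f_j^\#(c_2)}_{n/k_2,B_2^n}$, so vague convergence against $C_0(B_2^n)$ follows from convergence against the dense subspace $\Omega_0^0(B_2^n)$. I expect the main technical obstacle to be the H\"older and duality bookkeeping in the middle step, where the sharp constraint $k_1+k_2\le n$ is exactly what places $d\varphi\wedge\widehat L(c_1)$ in the dual of the space in which the weak convergence takes place; once this is in place, the rest of the argument is a fairly direct integration-by-parts manipulation.
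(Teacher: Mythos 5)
Your argument is correct and rests on the same three ingredients as the paper's proof (the uniform bound of Lemma \ref{lem:normalized-is-bounded}, the strong convergence $T(f_j^\#)\to\widehat L$ from the definition of a Sobolev--Poincar\'e limit, and H\"older duality under the constraint $k_1+k_2\le n$), but the decomposition is genuinely different. The paper splits the difference $f_j^\#(c_1)\wedge f_j^\#(c_2)-d\widehat L(c_1)\wedge d\widehat L(c_2)$ into two cross terms by the triangle inequality: the term $(d\widehat L(c_1)-f_j^\#(c_1))\wedge d\widehat L(c_2)$ dies purely by the weak convergence $f_j^\#(c_1)\weakto d\widehat L(c_1)$ paired against the fixed form $\varphi\wedge d\widehat L(c_2)\in L^{n/(n-k_1)}$, while the term $f_j^\#(c_1)\wedge(d\widehat L(c_2)-f_j^\#(c_2))$ is handled by writing $f_j^\#(c_2)=dT(f_j^\#(c_2))$, integrating by parts once, and using the strong convergence of $T(f_j^\#(c_2))$. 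You instead funnel the entire product through the potential of the \emph{first} factor via $f_j^\#(c_1)\wedge f_j^\#(c_2)=d\bigl(T(f_j^\#(c_1))\wedge f_j^\#(c_2)\bigr)$, pass to the limit in the resulting strong--weak pairing, and then integrate by parts a second time on the limit objects. The trade-off is that your route requires one extra Leibniz-type identity for the Sobolev limits, namely $d\bigl(\widehat L(c_1)\wedge L(c_2)\bigr)=L(c_1)\wedge L(c_2)$ weakly; this is legitimate (mollify the weakly closed form $L(c_2)$ near $\supp\varphi$ and use the conjugate exponents $k_1/n+k_2/n\le 1$), and it is of the same nature as the integration by parts against the non-smooth form $\varphi\wedge f_j^\#(c_1)$ that the paper itself performs, but the paper's arrangement avoids ever differentiating the limit objects. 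Your handling of the two regimes $k_1+k_2<n$ (density in $L^{n/(n-k_1-k_2)}$) and $k_1+k_2=n$ (uniform $L^1$ bound plus density in $C_0$) matches the paper's.
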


\begin{proof}
Let $c_i \in H_{\dR}^{k_i}(N)$ for $i=1,2$. By Lemma \ref{lem:normalized-is-bounded}, the sequence $(f_j^\#(c_1) \wedge f_j^\#(c_2))=(f_j^!(h(c_1)\wedge h(c_2))$ is bounded in $L^\frac{n}{k_1+k_2}(B_2^n;\bigwedge^{k_1+k_2} \R^n)$. Moreover, $\Omega_0^{n-k_1-k_2}(B_2^n)$ is dense in $L^\frac{n}{n-k_1-k_2}(B_2^n;\bigwedge^{n-k_1-k_2} \R^n)$ with respect to the $L^\frac{n}{n-k_1-k_2}$-norm if $k_1+k_2<n$ and $C_0^\infty(B_2^n)$ is dense in $C_0(B_2^n)$ with respect to the $L^\infty$-norm. Thus, it suffices to prove that
\[
\lim_{j\to \infty} \int_{B_2^n} \varphi \wedge f_j^\#(c_1) \wedge f_j^\#(c_2) = \int_{B_2^n} \varphi \wedge d\widehat{L}(c_1) \wedge d\widehat{L}(c_2)
\]
for every $\varphi \in \Omega_0^{n-k_1-k_2}(B_2^n)$.

Let $\varphi \in \Omega_0^{n-k_1-k_2}(B_2^n)$. By the triangle inequality, it suffices to estimate the terms
\[
\abs{ \int_{B_2^n} \varphi \wedge (d\widehat{L}(c_1) - f_j^\#(c_1)) \wedge d\widehat{L}(c_2) }
\]
and
\[
\abs{ \int_{B_2^n} \varphi \wedge f_j^\#(c_1) \wedge (d\widehat{L}(c_2) - f_j^\#(c_2)) }.
\]
Since
\[
\varphi \wedge d\widehat{L}(c_2) \in L^\frac{n}{k_2}(B_2^n;{\bigwedge}^{n-k_1} \R^n) \subset L^\frac{n}{n-k_1}(B_2^n;{\bigwedge}^{n-k_1} \R^n)
\]
and $f_j^\#(c_1) \weakto d\widehat{L}(c_1)$ in $L^\frac{n}{k_1}(B_2^n;\bigwedge^{k_1} \R^n)$, the first term converges to zero. For the second term, since $dT(f_j^\#(c_2))=f_j^\#(c_2)$, we have the estimate
\begin{align*}
&\abs{ \int_{B_2^n} \varphi \wedge f_j^\#(c_1) \wedge (d\widehat{L}(c_2) - f_j^\#(c_2)) } \\
	&\quad = \abs{ \int_{B_2^n} \varphi \wedge f_j^\#(c_1) \wedge d(\widehat{L}(c_2) - T(f_j^\#(c_2))) } \\
	&\quad = \abs{ \int_{B_2^n} d\varphi \wedge f_j^\#(c_1) \wedge (\widehat{L}(c_2) - T(f_j^\#(c_2)) } \\
	&\quad \le C(n,\varphi) \int_{B_2^n} \abs{f_j^\#(c_1)} \abs{\widehat{L}(c_2) - T(f_j^\#(c_2))}.
\end{align*}
By Hölder's inequality and Lemma \ref{lem:normalized-is-bounded}, we further have that
\begin{align*}
&\int_{B_2^n} \abs{f_j^\#(c_1)} \abs{\widehat{L}(c_2) - T(f_j^\#(c_2)} \\
	&\quad \le \norm{f_j^\#(c_1)}_{\frac{n}{n-k_2},B_2^n} \norm{\widehat{L}(c_2) - T(f_j^\#(c_2))}_{\frac{n}{k_2},B_2^n} \\
	&\quad \le C(n,k_1,k_2) \norm{f_j^\#(c_1)}_{\frac{n}{k_1},B_2^n} \norm{\widehat{L}(c_2) - T(f_j^\#(c_2))}_{\frac{n}{k_2},B_2^n} \\	
	&\quad \le C(n,k_1,k_2,K,D,c_1) \norm{\widehat{L}(c_2) - T(f_j^\#(c_2))}_{\frac{n}{k_2},B_2^n}.
\end{align*}
Since $T(f_j^\#(c_2)) \to \widehat{L}(c_2)$ in $L^\frac{n}{k_2}(B_2^n;\bigwedge^{k_2} \R^n)$, the claim follows.
\end{proof}

Next, we prove that the exterior derivative of a Sobolev--Poincaré limit commutes with the wedge product provided that the normalizations grow without bound.

\begin{lemma}
\label{lem:commutation}
Let $N$ be a closed, connected, and oriented Riemannian $n$-manifold, $n\ge 2$. Let $(f_j)$ be a sequence in $\cF_{K,D}(N)$ for which $A(f_j)\to \infty$ and let the operators $(f_j^\#)$ have a Sobolev--Poincaré limit $\widehat{L}$. Then
\[
d\widehat{L}(c_1 \wedge c_2) = d\widehat{L}(c_1) \wedge d\widehat{L}(c_2)
\]
for every $c_1 \in H_{\dR}^{k_1}(N)$ and $c_2 \in H_{\dR}^{k_2}(N)$ with $k_1,k_2 \in \{1,\ldots,n-1\}$ satisfying $k_1+k_2<n$.
\end{lemma}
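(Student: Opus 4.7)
The plan is to exploit the fact that the wedge of two harmonic representatives represents the correct cup-product class but is typically not itself harmonic, so it differs from $h_{c_1\wedge c_2}$ by an exact form; the growth condition $A(f_j)\to\infty$ then kills this exact error term in the weak limit via Lemma \ref{lem:limit-of-exact}. Concretely, since both $h_{c_1}\wedge h_{c_2}$ and $h_{c_1\wedge c_2}$ represent the class $c_1\wedge c_2 \in H^{k_1+k_2}_{\dR}(N)$, there exists a primitive $\beta \in \Omega^{k_1+k_2-1}(N)$ with
\[
h_{c_1}\wedge h_{c_2} \;=\; h_{c_1\wedge c_2} + d\beta,
\]
and the hypothesis $1\le k_1+k_2\le n-1$ places $\deg\beta$ in the range covered by Lemma \ref{lem:limit-of-exact}.

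Applying the operator $f_j^!$, which commutes with wedge products and satisfies $f_j^\# = f_j^!\circ h$, I obtain the pointwise identity
\[
f_j^\#(c_1)\wedge f_j^\#(c_2) \;=\; f_j^\#(c_1\wedge c_2) + f_j^!(d\beta).
\]
I would then pass to the weak limit in $L^{n/(k_1+k_2)}(B_2^n;{\bigwedge}^{k_1+k_2}\R^n)$ term by term. The left-hand side converges weakly to $d\widehat{L}(c_1)\wedge d\widehat{L}(c_2)$ by Proposition \ref{prop:weak-commutation}, where the restriction $k_1+k_2<n$ is precisely what puts us in the genuine $L^p$-weak regime rather than vague convergence of measures. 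On the right, the first summand converges weakly to $d\widehat{L}(c_1\wedge c_2)$ by the definition of a Sobolev--Poincar\'e limit. For the error term, Lemma \ref{lem:normalized-is-bounded} supplies a uniform $L^{n/(k_1+k_2)}$ bound for $(f_j^!(d\beta))$, while Lemma \ref{lem:limit-of-exact} yields $\int_{B_2^n}\varphi\wedge f_j^!(d\beta)\to 0$ for every $\varphi\in\Omega_0^{n-k_1-k_2}(B_2^n)$; by density of smooth compactly supported forms in $L^{n/(n-k_1-k_2)}$ together with the uniform norm bound, this upgrades to weak convergence of $f_j^!(d\beta)$ to zero. Uniqueness of weak limits then forces $d\widehat{L}(c_1\wedge c_2)= d\widehat{L}(c_1)\wedge d\widehat{L}(c_2)$.

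I do not foresee a genuine obstacle. The only place the assumption $A(f_j)\to\infty$ is actually consumed is in Lemma \ref{lem:limit-of-exact}, which is exactly the statement that normalized pullbacks of exact forms become negligible as the area of $f_j$ diverges; this is the quantitative incarnation of the ``failure of formality'' of the harmonic-representative map $h$ being invisible in the limit. The remainder is bookkeeping: verifying that $\deg\beta$ lies in the admissible range, that the Sobolev and H\"older exponents align across the density argument, and that the restriction $k_1+k_2<n$ keeps us away from the top-degree case, where weak convergence in $L^p$ must be replaced by vague convergence of measures.
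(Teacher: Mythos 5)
Your proof is correct and follows essentially the same route as the paper's: the same decomposition $h(c_1)\wedge h(c_2)=h(c_1\wedge c_2)+d\tau$, with Proposition \ref{prop:weak-commutation} handling the product term, the definition of the Sobolev--Poincaré limit handling $f_j^\#(c_1\wedge c_2)$, and Lemma \ref{lem:limit-of-exact} (using $A(f_j)\to\infty$) killing the exact error term. The only cosmetic difference is that you spell out the density upgrade from testing against $\Omega_0^{n-k_1-k_2}(B_2^n)$ to weak convergence, which the paper leaves implicit.
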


\begin{proof}
Let $k_1,k_2 \in \{1,\ldots,n-1\}$ satisfy $k_1+k_2<n$. Let $c_1 \in H_{\dR}^{k_1}(N)$ and $c_2 \in H_{\dR}^{k_2}(N)$. Then there exists a form $\tau \in \Omega^{k_1+k_2-1}(N)$ for which $h(c_1 \wedge c_2)=h(c_1)\wedge h(c_2)+d\tau$. Since $f_j^\#(c_1 \wedge c_2)\weakto d\widehat{L}(c_1 \wedge c_2)$ in $L^\frac{n}{k_1+k_2}(B_2^n;\bigwedge^{k_1+k_2} \R^n)$, we have
\begin{align*}
\int_{B_2^n} \varphi \wedge d\widehat{L}(c_1 \wedge c_2) &= \lim_{j\to \infty} \int_{B_2^n} \varphi \wedge f_j^\#(c_1 \wedge c_2) \\
&= \lim_{j\to \infty} \int_{B_2^n} \varphi \wedge (f_j^\#(c_1) \wedge f_j^\#(c_2) + f_j^!(d\tau)) \\
&= \int_{B_2^n} \varphi \wedge d\widehat{L}(c_1) \wedge d\widehat{L}(c_2)
\end{align*}
for every $\varphi \in \Omega_0^{n-k_1-k_2}(B_2^n)$ by Lemma \ref{lem:limit-of-exact} and Proposition \ref{prop:weak-commutation}. The claim follows.
\end{proof}

The following result shows that, if the target manifold $N$ is not rational homology sphere and $(f_j)$ is a sequence in $\cF_{K,D}(N)$ satisfying $A(f_j)\to \infty$, then a Sobolev--Poincaré limit $\widehat{L}$ of the operators $(f_j^\#)$ yields a vague limit for the normalized measures $\nu_{f_j}$. Furthermore, the limiting measure is an absolutely continuous measure; see Sections \ref{sec:example} and \ref{sec:measures} for further discussion on the vague convergence of normalized measures.

\begin{theorem}
\label{thm:limit-measure}
Let $N$ be a closed, connected, and oriented Riemannian $n$-manifold, $n\ge 2$, which is not a rational homology sphere. Let $(f_j)$ be a sequence in $\cF_{K,D}(N)$ for which $A(f_j)\to \infty$ and let the operators $(f_j^\#)$ have a Sobolev--Poincaré limit $\widehat{L}$. Then there exists a non-negative form $\nu_{d\widehat{L}} \in L^1(B_2^n;\bigwedge^n \R^n)$ for which $\nu_{f_j} \weakto \nu_{d\widehat{L}}$ vaguely as measures.
\end{theorem}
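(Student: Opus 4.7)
First I would use Poincaré duality. Since $N$ is closed, connected, oriented, and not a rational homology sphere, there exists $k \in \{1, \ldots, n-1\}$ with $H^k_{\dR}(N) \ne 0$; the non-degenerate cup-product pairing $H^k_{\dR}(N) \times H^{n-k}_{\dR}(N) \to \R$ then produces classes $c_1 \in H^k_{\dR}(N)$ and $c_2 \in H^{n-k}_{\dR}(N)$ with $c_1 \wedge c_2 = [\vol_N]$. The form $\vol_N$ is both closed and coclosed, hence harmonic, so $h(c_1 \wedge c_2) = \vol_N$. Because $h(c_1) \wedge h(c_2)$ and $\vol_N$ represent the same class, they differ by an exact form: $h(c_1) \wedge h(c_2) - \vol_N = d\tau$ for some $\tau \in \Omega^{n-1}(N)$.

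Next, I would record the identity
\[
f_j^\#(c_1) \wedge f_j^\#(c_2) = A(f_j)^{-1} f_j^*\bigl(h(c_1) \wedge h(c_2)\bigr) = \nu_{f_j} + f_j^!(d\tau),
\]
where the normalizing factor equals $A(f_j)^{-1}$ because $k + (n-k) = n$. Proposition \ref{prop:weak-commutation} applied in the middle-dimensional case $k_1 + k_2 = n$ gives that the left-hand side converges vaguely as measures to $d\widehat{L}(c_1) \wedge d\widehat{L}(c_2)$, and Hölder's inequality with exponents $n/k$ and $n/(n-k)$ places this wedge in $L^1(B_2^n; \bigwedge^n \R^n)$. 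I would set $\nu_{d\widehat{L}} := d\widehat{L}(c_1) \wedge d\widehat{L}(c_2)$.

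To finish, I would show that $f_j^!(d\tau) \to 0$ vaguely as signed measures on $B_2^n$. Lemma \ref{lem:limit-of-exact}, applied with $\alpha = \tau \in \Omega^{n-1}(N)$ and test forms $\varphi \in \Omega_0^0(B_2^n) = C_c^\infty(B_2^n)$, gives $\int_{B_2^n} \varphi \, f_j^!(d\tau) \to 0$. The pointwise bound $\abs{f_j^*(d\tau)} \le \norm{d\tau}_\infty J_{f_j}$ combined with the doubling condition yields the uniform total variation bound $\int_{B_2^n} \abs{f_j^!(d\tau)} \le \norm{d\tau}_\infty D$, which upgrades convergence on $C_c^\infty(B_2^n)$ to vague convergence on $C_0(B_2^n)$ by density. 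Thus $\nu_{f_j} \weakto \nu_{d\widehat{L}}$ vaguely, and non-negativity of $\nu_{d\widehat{L}}$ follows immediately from non-negativity of each $\nu_{f_j}$. The step I expect to require the most care is the vanishing of $f_j^!(d\tau)$: this is where the hypothesis $A(f_j) \to \infty$ is consumed through Lemma \ref{lem:limit-of-exact}, and where the doubling built into $\cF_{K,D}(N)$ supplies the total-variation control needed to pass from smooth compactly supported test functions to arbitrary elements of $C_0(B_2^n)$.
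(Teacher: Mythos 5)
Your proposal is correct and follows essentially the same route as the paper's proof: Poincaré duality to produce a dual pair with $h(c_1)\wedge h(c_2)=\vol_N+d\tau$ up to normalization, Proposition \ref{prop:weak-commutation} for the vague convergence of the wedge products, Lemma \ref{lem:limit-of-exact} to kill the exact correction term, and the uniform $L^1$ bound from the doubling condition to pass from smooth test functions to all of $C_0(B_2^n)$. The only differences are cosmetic (you normalize $c_1\wedge c_2=[\vol_N]$ rather than carrying the factor $\vol(N)/\int_N c\wedge c'$).
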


\begin{proof}
Since $N$ is not a rational homology sphere, there exist, by the Poincaré duality, an index $k\in \{1,\ldots,n-1\}$ and forms $c\in H_{\dR}^k(N)$ and $c'\in H_{\dR}^{n-k}(N)$ for which $\int_N c\wedge c'\ne 0$. Then
\[
\nu_{d\widehat{L}} = \frac{\vol(N)}{\int_N c\wedge c'} d\widehat{L}(c) \wedge d\widehat{L}(c')
\]
is a well-defined $n$-form in $L^1(B_2^n;\bigwedge^n \R^n)$, which we consider also as a measure.

Let $\tau \in \Omega^{n-1}(N)$ be a form satisfying
\[
\vol(N) h(c)\wedge h(c') = \left( \int_N c\wedge c'\right) (\vol_N+d\tau).
\]
Since $\norm{f_j^!(\vol_N)}_{1,B_2^n}\le D$ and $C_0^\infty(B_2^n)$ is dense in $C_0(B_2^n)$ with respect to the $L^\infty$-norm, it suffices to prove that
\[
\lim_{j\to \infty} \int_{B_2^n} \varphi \nu_{f_j} = \int_{B_2^n} \varphi \nu_{d\widehat{L}}
\]
for every $\varphi \in C_0^\infty(B_2^n)$. By Proposition \ref{prop:weak-commutation} and Lemma \ref{lem:limit-of-exact}, we obtain
\begin{align*}
\int_{B_2^n} \varphi \nu_{d\widehat{L}} &= \frac{\vol(N)}{\int_N c\wedge c'} \lim_{j\to \infty} \int_{B_2^n} \varphi f_j^\#(c) \wedge f_j^\#(c') \\
&= \frac{\vol(N)}{\int_N c\wedge c'} \lim_{j\to \infty} \int_{B_2^n} \varphi f_j^!(h(c)) \wedge f_j^!(h(c')) \\
&= \lim_{j\to \infty} \int_{B_2^n} \varphi f_j^!(\vol_N + d\tau) \\
&= \lim_{j\to \infty} \int_{B_2^n} \varphi \nu_{f_j}
\end{align*}
for every $\varphi \in C_0^\infty(B_2^n)$. Since the forms $\nu_{f_j}$ are non-negative, so is $\nu_{d\widehat{L}}$.
\end{proof}

We finish this section by showing that, when restricted to the closed unit ball $\bar B^n$, the vague convergence of measures in Theorem \ref{thm:limit-measure} improves into weak convergence of probability measures.

\begin{corollary}
\label{cor:probability-measure}
Let $N$ be a closed, connected, and oriented Riemannian $n$-manifold, $n\ge 2$, which is not a rational homology sphere. Let $(f_j)$ be a sequence in $\cF_{K,D}(N)$ for which $A(f_j)\to \infty$ and let the operators $(f_j^\#)$ have a Sobolev--Poincaré limit $\widehat{L}$. Let $\nu_{d\widehat{L}} \in L^1(B_2^n;\bigwedge^n \R^n)$ be the limit measure in Theorem \ref{thm:limit-measure}. Then $\nu_{f_j} \llcorner \bar B^n \weakto \nu_{d\widehat{L}} \llcorner \bar B^n$. In particular, $\nu_{d\widehat{L}}(B^n)=1$.
\end{corollary}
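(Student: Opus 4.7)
The plan is to leverage the absolute continuity of both $\nu_{f_j}$ and $\nu_{d\widehat{L}}$ to upgrade the vague convergence on $B_2^n$ (Theorem \ref{thm:limit-measure}) to weak convergence of probability measures on $\bar B^n$. Two pieces of mass information drive the argument: first, $\nu_{f_j}(\bar B^n)=\nu_{f_j}(B^n)=1$, since $\nu_{f_j}$ has Jacobian density $J_{f_j}/A(f_j)$ and $A(f_j)=\int_{B^n}f_j^*\vol_N$; and second, $\nu_{d\widehat{L}}(\partial B^n)=0$, since $\nu_{d\widehat{L}}\in L^1(B_2^n;\bigwedge^n\R^n)$.

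First I would establish $\nu_{d\widehat{L}}(B^n)=1$ by a squeeze. For each small $\epsilon>0$, pick cutoffs $\rho_\epsilon,\chi_\epsilon\in C_c^\infty(B_2^n)$ with values in $[0,1]$ such that $\rho_\epsilon=1$ on $\bar B^n(1-\epsilon)$ with $\supp\rho_\epsilon\subset B^n$, and $\chi_\epsilon=1$ on $\bar B^n$ with $\supp\chi_\epsilon\subset B^n(1+\epsilon)$. The inequalities $\int\rho_\epsilon\,d\nu_{f_j}\le 1\le\int\chi_\epsilon\,d\nu_{f_j}$ persist in the vague limit, and since $\rho_\epsilon\to\mathbf{1}_{B^n}$ and $\chi_\epsilon\to\mathbf{1}_{\bar B^n}$ pointwise as $\epsilon\to 0$, dominated convergence together with $\nu_{d\widehat{L}}(\partial B^n)=0$ forces both bounds to converge to $\nu_{d\widehat{L}}(B^n)$, giving the equality.

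For the weak convergence, given $\varphi\in C(\bar B^n)$, I would extend $\varphi$ by Tietze to $\hat\varphi\in C(B_2^n)$ with $\|\hat\varphi\|_\infty=\|\varphi\|_\infty$, and set $\tilde\varphi_\epsilon=\hat\varphi\chi_\epsilon\in C_c(B_2^n)$, so that $\tilde\varphi_\epsilon=\varphi$ on $\bar B^n$. Theorem \ref{thm:limit-measure} gives $\int\tilde\varphi_\epsilon\,d\nu_{f_j}\to\int\tilde\varphi_\epsilon\,d\nu_{d\widehat{L}}$. Splitting each integral over $\bar B^n$ versus $B^n(1+\epsilon)\setminus\bar B^n$, the leak term for either measure is bounded by $\|\varphi\|_\infty\mu(B^n(1+\epsilon)\setminus\bar B^n)$. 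An auxiliary cutoff equal to $1$ on $B^n(1+\epsilon)$ and supported in $B^n(1+2\epsilon)$ combined with vague convergence yields $\limsup_j\nu_{f_j}(B^n(1+\epsilon))\le\nu_{d\widehat{L}}(B^n(1+2\epsilon))$, so that
\[
\limsup_{j\to\infty}\left|\int_{\bar B^n}\varphi\,d\nu_{f_j}-\int_{\bar B^n}\varphi\,d\nu_{d\widehat{L}}\right|\le 2\|\varphi\|_\infty\bigl(\nu_{d\widehat{L}}(B^n(1+2\epsilon))-1\bigr).
\]
As $\epsilon\to 0$ the right-hand side vanishes by absolute continuity of $\nu_{d\widehat{L}}$, and since the left-hand side is independent of $\epsilon$, it must be zero.

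The main obstacle is that vague convergence on $B_2^n$ is insensitive to mass concentrated on $\partial B^n$, so turning it into weak convergence of probability measures on $\bar B^n$ requires ruling out boundary concentration in the limit. Both parts of the argument are rescued precisely by the absolute continuity of $\nu_{d\widehat{L}}$ provided by Theorem \ref{thm:limit-measure}.
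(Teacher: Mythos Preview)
Your proposal is correct and follows essentially the same approach as the paper: both arguments sandwich using compactly supported cutoffs on either side of $\bar B^n$, invoke the vague convergence from Theorem~\ref{thm:limit-measure}, and then use the absolute continuity of $\nu_{d\widehat{L}}$ (hence $\nu_{d\widehat{L}}(\partial B^n)=0$) to pass the cutoffs to the limit. The only cosmetic difference is order: you establish $\nu_{d\widehat{L}}(B^n)=1$ first and feed it into the weak-convergence step, whereas the paper proves the weak convergence for non-negative test functions and then reads off $\nu_{d\widehat{L}}(B^n)=1$ by taking $\zeta\equiv 1$.
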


\begin{proof}
Let $\zeta \in C(\bar B^n)$ be a non-negative function. Let $\eta \in C_b(B_2^n)$ be a non-negative bounded function satisfying $\eta|_{\bar B^n}=\zeta$. Then $\eta \nu_{f_j} \weakto \eta \nu_{d\widehat{L}}$ vaguely as measures.

Let $0<r<1<R<2$. Let $\psi_r \in C_0(B^n)$ and $\psi_R \in C_0(B^n(0,R))$ satisfy $\chi_{B^n(0,r)} \le \psi_r \le 1$ and $\chi_{B^n} \le \psi_R \le 1$. Since $\eta \nu_{d\widehat{L}}$ is non-negative, we obtain
\[
\int_{\bar B^n(0,r)} \eta \nu_{d\widehat{L}} \le \int_{B_2^n} \psi_r \eta \nu_{d\widehat{L}} = \lim_{j\to \infty} \int_{B_2^n} \psi_r \eta \nu_{f_j} \le \liminf_{j\to \infty} \int_{B^n} \eta \nu_{f_j}
\]
and
\[
\int_{B^n(0,R)} \eta \nu_{d\widehat{L}} \ge \int_{B_2^n} \psi_R \eta \nu_{d\widehat{L}} = \lim_{j\to \infty} \int_{B_2^n} \psi_R \eta \nu_{f_j} \ge \limsup_{j\to \infty} \int_{B^n} \eta \nu_{f_j}.
\]
Since $\nu_{d\widehat{L}} \in L^1(B_2^n;\bigwedge^n \R^n)$,
\[
\liminf_{j\to \infty} \eta \nu_{f_j}(B^n) \ge \eta \nu_{d\widehat{L}}(B^n) = \eta \nu_{d\widehat{L}}(\bar B^n) \ge \limsup_{j\to \infty} \eta \nu_{f_j}(B^n).
\]
Thus
\begin{equation}
\label{eq:weak-convergence-for-zeta}
\lim_{j\to \infty} \int_{\bar B^n} \zeta \nu_{f_j} = \int_{\bar B^n} \zeta \nu_{d\widehat{L}}.    
\end{equation}
Equality \eqref{eq:weak-convergence-for-zeta} is then obtained for an arbitrary function $\zeta \in C(\bar B^n)$ by writing $\zeta = \max(\zeta,0) - \max(-\zeta,0)$.

Especially, since $\nu_{d\widehat{L}} \in L^1(B_2^n;\bigwedge^n \R^n)$, we have, for $\zeta=1$, that
\[
\int_{B^n} \nu_{d\widehat{L}} = \int_{\bar B^n} \nu_{d\widehat{L}} = \lim_{j\to \infty} \int_{\bar B^n} \nu_{f_j} = \lim_{j\to \infty} \int_{B^n} \nu_{f_j} = 1.
\]
\end{proof}

\section{Weak exterior derivatives of Sobolev--Poincaré limits extend to algebra homomorphisms}
\label{sec:extension}

In this section, we prove that, if $N$ is not a rational homology sphere and $\widehat{L}$ is a Sobolev--Poincaré limit induced by a sequence $(f_j)$ in $\cF_{K,D}(N)$ satisfying $A(f_j)\to \infty$, then the map $\Phi_{d\widehat{L}} \colon H_{\dR}^*(N) \to \cW^*(B_2^n)$ defined by
\[
H_{\dR}^k(N) \ni c \mapsto \left\{ \begin{array}{ll}
\text{constant function } h_c, & \text{for } k=0 \\
d\widehat{L}(c), & \text{for } 1\le k\le n-1 \\
\vol(N)^{-1} (\int_N c) \nu_{d\widehat{L}}, & \text{for } k =n
\end{array}\right.
\]
is a graded algebra homomorphism; here $\nu_{d\widehat{L}}$ is the limit $n$-form in Theorem \ref{thm:limit-measure}.

\begin{proposition}
\label{prop:extends-to-homomorphism}
Let $N$ be a closed, connected, and oriented Riemannian $n$-manifold, $n\ge 2$, which is not a rational homology sphere. Let $(f_j)$ be a sequence in $\cF_{K,D}(N)$ for which $A(f_j)\to \infty$ and let the operators $(f_j^\#)$ have a Sobolev--Poincaré limit $\widehat{L}$. Then $\Phi_{d\widehat{L}}$ is a graded algebra homomorphism.
\end{proposition}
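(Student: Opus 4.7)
The plan is to verify the identity
\[
\Phi_{d\widehat{L}}(c_1 \wedge c_2) = \Phi_{d\widehat{L}}(c_1) \wedge \Phi_{d\widehat{L}}(c_2)
\]
for homogeneous classes $c_1 \in H^{k_1}_\dR(N)$, $c_2 \in H^{k_2}_\dR(N)$ by a case analysis on the pair $(k_1,k_2)$.

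First, I would dispose of the degenerate cases. When $k_1 = 0$ or $k_2 = 0$, the class is a scalar multiple of the unit by connectedness of $N$, and the identity reduces to linearity, since $\Phi_{d\widehat{L}}$ sends a class in $H^0_\dR(N)$ to the corresponding constant function and the wedge product becomes scalar multiplication. When $k_1 + k_2 > n$, both sides vanish: the cup product lies in $H^{k_1+k_2}_\dR(N) = 0$, while any wedge of two forms on $B_2^n$ whose degrees sum to more than $n$ is pointwise zero. When $\{k_1,k_2\} = \{0,n\}$, the case again follows by linearity combined with the top-degree definition of $\Phi_{d\widehat{L}}$.

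This leaves the cases $1 \le k_1, k_2 \le n-1$ with $k_1 + k_2 \le n$. The sub-case $k_1 + k_2 < n$ is exactly the statement of Lemma \ref{lem:commutation}, since $\Phi_{d\widehat{L}} = d\widehat{L}$ in all relevant degrees; nothing further is needed there.

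The only substantial remaining case is $k_1 + k_2 = n$ with $1 \le k_1, k_2 \le n-1$, where I need to show
\[
d\widehat{L}(c_1) \wedge d\widehat{L}(c_2) = \vol(N)^{-1} \Bigl(\int_N c_1 \wedge c_2\Bigr) \nu_{d\widehat{L}}
\]
as elements of $L^1(B_2^n;\bigwedge^n \R^n)$; note that the left-hand side lies in $L^1$ by Hölder's inequality applied to the factors in $L^{n/k_1}$ and $L^{n/k_2}$. Since both $h(c_1)\wedge h(c_2)$ and $\vol(N)^{-1}(\int_N c_1\wedge c_2)\vol_N$ are smooth representatives of $c_1 \wedge c_2 \in H^n_\dR(N)$, they differ by an exact form $d\tau$ for some $\tau \in \Omega^{n-1}(N)$. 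Applying $f_j^!$ to this identity, testing against an arbitrary $\varphi \in C_0^\infty(B_2^n)$, and passing to the limit $j \to \infty$, I would invoke Proposition \ref{prop:weak-commutation} to identify the limit of the left-hand side as $\int_{B_2^n} \varphi \, d\widehat{L}(c_1) \wedge d\widehat{L}(c_2)$, Theorem \ref{thm:limit-measure} to identify the first right-hand limit as $\vol(N)^{-1}(\int_N c_1 \wedge c_2) \int_{B_2^n} \varphi \, \nu_{d\widehat{L}}$, and Lemma \ref{lem:limit-of-exact} to eliminate the exact contribution $f_j^!(d\tau)$. The resulting identity of integrals against arbitrary test functions in $C_0^\infty(B_2^n)$ upgrades to the displayed $L^1$-form identity.

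I do not anticipate a major obstacle: all the hard analytic machinery (the Poincaré operator, the weak-wedge continuity, the vague limit measure, and the vanishing of exact contributions) has already been set up in the earlier sections. The task is essentially a disciplined bookkeeping of cases together with a clean assembly of the three preceding results; the only mild subtlety is verifying that each degenerate case either reduces to linearity or produces zero on both sides.
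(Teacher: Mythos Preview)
Your proposal is correct and follows essentially the same approach as the paper: reduce to the cases $1\le k_1,k_2\le n-1$ with $k_1+k_2\le n$, invoke Lemma~\ref{lem:commutation} for the subcritical case $k_1+k_2<n$, and for the critical case $k_1+k_2=n$ combine Proposition~\ref{prop:weak-commutation}, Lemma~\ref{lem:limit-of-exact}, and Theorem~\ref{thm:limit-measure} after writing $\vol(N)\,h(c_1)\wedge h(c_2) - (\int_N c_1\wedge c_2)\vol_N$ as an exact form. The only difference is that you spell out the degenerate cases explicitly where the paper simply writes ``without loss of generality''.
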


\begin{proof}
Clearly $\Phi_{d\widehat{L}}$ is a graded linear map. It remains to prove that $\Phi_{d\widehat{L}}$ is an algebra homomorphism. Let $c_1 \in H_{\dR}^{k_1}(N)$ and $c_2 \in H_{\dR}^{k_2}(N)$, $k_1,k_2 \in \{0,\ldots,n\}$. Without loss of generality, we may assume that $k_1,k_2>0$ and that $k_1+k_2\le n$. If $k_1+k_2<n$, then, by Lemma \ref{lem:commutation}, we have that
\[
\Phi_{d\widehat{L}}(c_1 \wedge c_2) = d\widehat{L}(c_1 \wedge c_2) = d\widehat{L}(c_1) \wedge d\widehat{L}(c_2) = \Phi_{d\widehat{L}}(c_1) \wedge \Phi_{d\widehat{L}}(c_2).
\]
Suppose now that $k_1+k_2=n$ and let $\tau \in \Omega^{n-1}(N)$ be a form satisfying
\[
\vol(N) h(c_1) \wedge h(c_2) = \left(\int_N c_1 \wedge c_2\right)(\vol_N+d\tau).
\]
By Proposition \ref{prop:weak-commutation}, Lemma \ref{lem:limit-of-exact}, and Theorem \ref{thm:limit-measure}, we have
\begin{align*}
\int_{B_2^n} \varphi \Phi_{d\widehat{L}}(c_1) \wedge \Phi_{d\widehat{L}}(c_2) &= \int_{B_2^n} \varphi d\widehat{L}(c_1)\wedge d\widehat{L}(c_2) \\
&= \lim_{j\to \infty} \int_{B_2^n} \varphi f_j^\#(c_1)\wedge f_j^\#(c_2) \\
&= \frac{\int_N c_1 \wedge c_2}{\vol(N)} \lim_{j\to \infty} \int_{B_2^n} \varphi f_j^!(\vol_N + d\tau) \\
&= \frac{\int_N c_1 \wedge c_2}{\vol(N)} \lim_{j\to \infty} \int_{B_2^n} \varphi \nu_{f_j} \\
&= \frac{\int_N c_1 \wedge c_2}{\vol(N)} \int_{B_2^n} \varphi \nu_{d\widehat{L}} = \int_{B_2^n} \varphi \Phi_{d\widehat{L}}(c_1 \wedge c_2)
\end{align*}
for every $\varphi \in C_0^\infty(B_2^n)$. Thus $\Phi_{d\widehat{L}}(c_1)\wedge \Phi_{d\widehat{L}}(c_2)=\Phi_{d\widehat{L}}(c_1 \wedge c_2)$. This concludes the proof.
\end{proof}

\section{Limits of normalized pull-backs are algebra monomorphisms}
\label{sec:monomorphism}

In this section, we recall the statement of Theorem \ref{thm:limit} and give its proof.

\begin{named}{Theorem \ref{thm:limit}}
Let $N$ be a closed, connected, and oriented Riemannian $n$-manifold, $n\ge 2$. Let $(f_j)$ be a sequence in $\cF_{K,D}(N)$ for which $A(f_j) \to \infty$ and $f_j^\# \weakto L$, where
\[
f_j^\#, L \colon \bigoplus_{k=1}^{n-1} H^k_{\dR}(N) \to \bigoplus_{k=1}^{n-1} L^\frac{n}{k}(B_2^n; {\bigwedge}^k \R^n).
\]
Then $L$ extends to an embedding of graded algebras 
\[L\colon H^*_\dR(N) \to \cW^*(B_2^n)
\]
satisfying
\[
\int_{B^n} L([\vol_N])=1.
\]
\end{named}

\begin{proof}
We may assume that $N$ is not a rational homology sphere. By Lemma \ref{lem:weak-is-exact-poincare}, the operator $L$ is the weak exterior derivative of a Sobolev--Poincaré limit $\widehat{L}$ of the operators $(f_{j_i}^\#)$ for some subsequence $(f_{j_i})$. By Proposition \ref{prop:extends-to-homomorphism}, the map $\Phi_L = \Phi_{d\widehat{L}} \colon H_{\dR}^*(N) \to \cW^*(B_2^n)$ is an algebra homomorphism and $\Phi_L$ extends $L$ by definition. Moreover, by definition and Corollary \ref{cor:probability-measure}, we have
\[
\int_{B^n} \Phi_L([\vol_N]) = \nu_{d\widehat{L}}(B^n) = 1.
\]

It remains to prove that $\Phi_L$ is injective. Since $\Phi_L([\vol_N])\ne 0$, it suffices to show that $\Phi_L$ is injective on $H_{\dR}^k(N)$ for $1\le k\le n-1$. Let $c\in H_{\dR}^k(N)$ be non-zero. There exists $c' \in H_{\dR}^{n-k}(N)$ for which $\int_N c\wedge c'\ne 0$. Since
\[
\Phi_L(c) \wedge \Phi_L(c') = \Phi_L(c\wedge c') = \frac{\int_N c\wedge c'}{\vol(N)} \Phi_L([\vol_N]) \ne 0,
\]
we have that $\Phi_L(c)\ne 0$. This concludes the proof.
\end{proof}

\section{Proof of Theorem \ref{thm:sup}}
\label{sec:sup}

In this section, we recall the statement of Theorem \ref{thm:sup} and finish its proof.

\begin{named}{Theorem \ref{thm:sup}}
Let $N$ be a closed, connected, and oriented Riemannian $n$-manifold, $n\ge 2$, for which
\[
\sup_{f\in \cF_{K,D}(N)} A(f) = \infty.
\]
Then there exists an embedding of graded algebras $H^*_{\dR}(N) \to \bigwedge^* \R^n$.
\end{named}

\begin{proof}
Since $\sup_{f\in \cF_{K,D}(N)} A(f)=\infty$ and $\oplus_{k=1}^{n-1} H_{\dR}^k(N)$ is finite-dimensional, by Lemma \ref{lem:normalized-is-bounded} and the Banach--Alaoglu theorem, there exist a sequence $(f_j)$ in $\cF_{K,D}(N)$ for which $A(f_j)\to \infty$ and an operator
\[
L\colon \oplus_{k=1}^{n-1} H_{\dR}^k(N) \to \oplus_{k=1}^{n-1} L^\frac{n}{k}(B_2^n;{\bigwedge}^k \R^n),
\]
which is the weak limit of the sequence $(f_j^\#)$. By Theorem \ref{thm:limit}, the operator $L$ extends to an embedding of graded algebras $L\colon H_{\dR}^*(N)\to \cW^*(B_2^n)$ satisfying
\[
\int_{B^n} L([\vol_N]) = 1.
\]

Let $c_1,\ldots,c_m$ be a basis of $H_{\dR}^*(N)$. We fix point-wise Borel representatives for $L([\vol_N])$, $L(c_{\ell_1} \wedge c_{\ell_2})$, and $L(c_{\ell_1}) \wedge L(c_{\ell_2})$, where $\ell_1,\ell_2=1,\ldots,m$. Let
\[
E = \{ x\in B^n \colon (L([\vol_N]))(x)\ne 0 \}
\]
and
\[
E_{\ell_1,\ell_2} = \{ x\in B^n \colon (L(c_{\ell_1} \wedge c_{\ell_2}))(x) = (L(c_{\ell_1}))(x) \wedge (L(c_{\ell_2}))(x) \}
\]
for $\ell_1,\ell_2=1,\ldots,m$. Since the set $E$ has positive $n$-dimensional Lebesgue measure and the sets $E_{\ell_1,\ell_2}$ have full $n$-dimensional Lebesgue measure in $B^n$, the set $E' = \cap_{\ell_1,\ell_2=1}^m E\cap E_{\ell_1,\ell_2}$ has positive $n$-dimensional Lebesgue measure.

Let $x_0 \in E'$. Now the map
\[
\ccL \colon H_{\dR}^*(N)\to {\bigwedge}^* \R^n, \quad c\mapsto (L(c))(x_0),
\]
is a graded algebra homomorphism, which is injective on $H_{\dR}^n(N)$. If $N$ is a rational homology sphere, then $\ccL$ is trivially a monomorphism. If $N$ is not a rational homology sphere, the injectivity of $\ccL$ follows from the non-degeneracy of the pairing $H_{\dR}^*(N) \times H_{\dR}^{n-*}(N)\to \R$, $(c,c')\mapsto \int_N c\wedge c'$.
\end{proof}

\subsection{Remark on the use of harmonic representatives}
\label{subsec:remarks}

We defined operators $f^\# \colon H_{\dR}^*(N) \to \cW^*(B_2^n)$ as composition $f^\# = f^! \circ h$, where $h\colon H_{\dR}^*(N)\to \Omega^*(N)$ is the section of $\ker d\to H_{\dR}^*(N)$, $\alpha \mapsto [\alpha]$, assigning to each cohomology class its harmonic representative. In fact, we could have taken any section $\xi$ of $\ker d\to H_{\dR}^*(N)$, $\alpha \mapsto [\alpha]$, and defined a normalized $\xi$-pullback as composition $f^\xi = f^! \circ \xi$. The limits of normalized $\xi$-pullbacks are independent of the section $\xi$ in the following sense.

\begin{lemma}
\label{lem:section}
Let $N$ be a closed, connected, and oriented Riemannian $n$-manifold, $n\ge 2$, and let $(f_j)$ be a sequence in $\cF_{K,D}(N)$ for which $A(f_j)\to \infty$. Let $\xi\colon H_{\dR}^*(N)\to \Omega^*(N)$ be a section of $\ker d\to H_{\dR}^*(N)$, $\alpha \mapsto [\alpha]$, and let
\[
L,L' \colon \bigoplus_{k=1}^{n-1} H_{\dR}^k(N) \to \bigoplus_{k=1}^{n-1} L^\frac{n}{k}(B_2^n;{\bigwedge}^k \R^n)
\]
be weak limits of $f_j^\#$ and $f_j^\xi$, respectively. Then $L=L'$.
\end{lemma}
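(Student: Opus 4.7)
The plan is to exploit the fact that any two closed representatives of the same de Rham cohomology class differ by an exact form, and then apply the vanishing Lemma \ref{lem:limit-of-exact} for pull-backs of exact forms under sequences with $A(f_j) \to \infty$.

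Fix $k \in \{1,\ldots,n-1\}$ and a class $c \in H^k_{\dR}(N)$. Since both $\xi(c)$ and $h(c)$ are closed $k$-forms representing $c$, there exists $\tau_c \in \Omega^{k-1}(N)$ with
\[
\xi(c) - h(c) = d\tau_c.
\]
Consequently, by linearity of $f_j^!$,
\[
f_j^\xi(c) - f_j^\#(c) = f_j^!(\xi(c)) - f_j^!(h(c)) = f_j^!(d\tau_c).
\]
The next step is to test this difference against forms $\varphi \in \Omega_0^{n-k}(B_2^n)$. Applying Lemma \ref{lem:limit-of-exact} with $\alpha = \tau_c \in \Omega^{k-1}(N)$ (so that $n - (k-1) - 1 = n - k$ matches the degree of $\varphi$) yields
\[
\lim_{j \to \infty} \int_{B_2^n} \varphi \wedge f_j^!(d\tau_c) = 0.
\]
On the other hand, by assumption $f_j^\xi(c) \weakto L'(c)$ and $f_j^\#(c) \weakto L(c)$ in $L^{n/k}(B_2^n; \bigwedge^k \R^n)$; since $\varphi \in \Omega_0^{n-k}(B_2^n) \subset L^{n/(n-k)}(B_2^n; \bigwedge^{n-k} \R^n)$, the wedge pairing is continuous under these weak limits and we obtain
\[
\int_{B_2^n} \varphi \wedge (L'(c) - L(c)) = \lim_{j \to \infty} \int_{B_2^n} \varphi \wedge (f_j^\xi(c) - f_j^\#(c)) = 0
\]
for every $\varphi \in \Omega_0^{n-k}(B_2^n)$.

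To conclude, I would use that $\Omega_0^{n-k}(B_2^n)$ is dense in $L^{n/(n-k)}(B_2^n; \bigwedge^{n-k} \R^n)$ and that the wedge pairing
\[
L^{n/k}(B_2^n; {\bigwedge}^k \R^n) \times L^{n/(n-k)}(B_2^n; {\bigwedge}^{n-k} \R^n) \to \R, \quad (\omega,\varphi) \mapsto \int_{B_2^n} \omega \wedge \varphi,
\]
is non-degenerate (being Hölder-dual up to the canonical isomorphism $\bigwedge^{n-k} \R^n \cong (\bigwedge^k \R^n)^*$ given by the wedge). Hence $L(c) = L'(c)$ in $L^{n/k}(B_2^n; \bigwedge^k \R^n)$, and since $c$ was an arbitrary class in $\bigoplus_{k=1}^{n-1} H^k_{\dR}(N)$, we conclude $L = L'$. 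The argument contains no real obstacle: the only observation required is that $\xi(c)$ and $h(c)$ differ by an exact form, after which Lemma \ref{lem:limit-of-exact} does the work.
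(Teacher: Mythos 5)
Your proof is correct and follows essentially the route the paper intends: the paper omits the details but explicitly identifies the key point as the exactness of $\xi(c)-h(c)$, and your combination of this with Lemma \ref{lem:limit-of-exact} (which is itself proved by the same integration by parts underlying Lemma \ref{lem:weak-is-exact-poincare}) together with testing against $\Omega_0^{n-k}(B_2^n)$ is the natural way to fill them in. The degree bookkeeping in your application of Lemma \ref{lem:limit-of-exact} and the identification of the weak limits via density are both correct.
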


The proof is a simple modification of the proof of Lemma \ref{lem:weak-is-exact-poincare} since, for each $c\in H_{\dR}^*(N)$, the form $\xi(c)-h(c)$ is exact. We leave the details to the interested reader.

\subsection{Remark on the non-uniqueness of the embedding $H^*_{\dR}(N) \to \cW^*(B_2^n)$}
\label{subsec:unique}

Unless the manifold $N$ is a rational homology sphere, the embeddings of graded algebras $H^*_{\dR}(N)\to \cW^*(B_2^n)$ in Theorem \ref{thm:limit} are non-unique. Indeed, given a sequence $(f_j)$ for which the sequence $f_j^\# \weakto L$ in the sense of Theorem \ref{thm:limit} which admits a linear map $Q\in \mathrm{SO}_n(\R)$ satisfying $Q^* L\ne L$, then $(f_j \circ Q)$ is a sequence in $\cF_{K,D}(N)$ for which $(f_j\circ Q)^\# \weakto Q^* L\ne L$ in the sense of Theorem \ref{thm:limit}.

Even if we restrict ourselves to embeddings $H^*_{\dR}(N) \to \cW^*(B_2^n)$ associated to sequences $(f_j \colon B_2^n \to N)$, which stem from a single quasiregular map $\R^n \to N$, the uniqueness of the limit is not guaranteed. For statements of positive and negative results, let $f\colon \R^n \to N$ be a non-constant quasiregular map and consider the family of all mappings $f_{a,r} \colon B_2^n \to N$, $x\mapsto f \circ T_{a,r}$, where $a\in \R^n$, $r>0$, and $T_{a,r} \colon \R^n \to \R^n$  is the mapping $x\mapsto r x + a$.

We begin with an observation that, for the Riemannian covering map $\R^n \to \bT^n$, the induced algebra homomorphism $H_{\dR}^*(\bT^n)\to \cW^*(B_2^n)$ is unique.

\begin{example}
Let $\pi \colon \R^n \to \bT^n$, $(x_1,\ldots, x_n) \mapsto (e^{ix_1},\ldots, e^{ix_n})$, be the standard Riemannian covering map and let $\theta_1,\ldots, \theta_n$ be the $1$-forms for which $\pi^*\theta_i = dx_i$ for each $i=1,\ldots, n$. Let also $L_{\pi} \colon H^*_\dR(\bT^n) \to \cW^*(B_2^n)$ the embedding $[\theta_{i_1} \wedge \cdots \wedge \theta_{i_k}] \to m_n(B^n)^{-\frac{k}{n}} dx_{i_1} \wedge \cdots dx_{i_k}$, where $m_n(B^n)=\int_{B^n} \vol_{\R^n}$.

Since $\pi$ is a Riemannian covering map, it is $1$-quasiregular. We also have that all the mappings $\pi_{a,r} = \pi \circ T_{a,r}|_{B_2^n} \colon B_2^n \to \bT^n$ are $1$-quasiregular. It is also easy to deduce that $\pi_{a,r}^\# = L_\pi$ for each $a\in \R^n$ and $r>0$. Indeed, it suffices to observe that   
\begin{align*}
\pi_{a,r}^!(\theta_{i_1}\wedge \cdots \wedge \theta_{i_k}) &= \frac{1}{A(\pi \circ T_{a,r})^{k/n}} T_{a,r}^* \circ \pi^*(\theta_{i_1}\wedge \cdots \wedge \theta_{i_k}) \\
&= \frac{1}{r^k m_n(B^n)^\frac{k}{n}} T_{a,r}^* (dx_{i_1} \wedge \cdots \wedge dx_{i_k}) \\
&= \frac{1}{m_n(B^n)^\frac{k}{n}} dx_{i_1}\wedge \cdots \wedge dx_{i_k}
\end{align*}
for each multi-index $(i_1,\ldots, i_k)$. 
\end{example}

We modify now the Riemannian covering mapping $\R^n \to \bT^n$ in a sequence of Euclidean balls to obtain a quasiregular map $\bR^n \to \bT^n$ for which the induced embedding $H_\dR^*(\bT^n) \to \cW^*(B_2^n)$ is non-unique.

\begin{example}
Let $Q\in \mathrm{SO}_n(\R)$ be a linear map, which is not the identity. Let also $\alpha \colon [0,1]\to \mathrm{SO}_n(\R)$ be a geodesic in $\mathrm{SO}_n(\R)$ from the identity to $Q$. We denote $Q_t = \alpha(t)$ for each $t\in [0,1]$. Let also $B_j = B^n(a_j, r_j) = B^n(2^j e_1,j)$ be a sequence of mutually disjoint Euclidean balls for which $r_j \to \infty$.

We define now $h_Q \colon \R^n \to \R^n$ as follows. In the complement of the union $\bigcup_j B_j$, the mapping $h_Q$ is the identity. For $x\in B^n(a_j,r_j/2)$ , we set $h_Q(x)=a_j + r_j Q((x-a_j)/r_j)$. Finally, if $x\in B^n(a_j,r_j)\setminus B^n(a_j,r_j/2)$, we set $h_Q(x) = a_j + r_j Q_{2-2|x-a_j|/r_j}((x - a_j)/r_j)$. Since $\alpha$ is a geodesic and each $Q_t$ is an isometry, the mapping $h_Q$ is quasiconformal. 

Let now $f \colon \R^n \to \bT^n$ be the quasiregular map $f = \pi \circ h_Q$. Then, the sequence $(f_{-je_1, j})$ yields the limiting embedding $L_\pi$ and the sequence $(f_{a_j,r_j/2})$ the limiting embedding $Q^* L_\pi \ne L_\pi$.
\end{example}

\section{Example of a singular limit of normalized measures of quasiregular mappings $B_2^n \to \bS^n$}
\label{sec:example}

In this subsection, we show that, for $n\ge 2$, there exist a constant $K=K(n)\ge 1$ and maps $f_j \in \cF_{K,3}(\bS^n)$ for which the normalized measures $\nu_{f_j}$ converge vaguely to the Borel measure $\nu$,
\[
E\mapsto \frac{1}{2} \cH^1(E\cap J),
\]
where $J=[-1,1]\times \{0\}^{n-1}$. Since $\nu$ is singular with respect to the $n$-dimensional Lebesgue measure, this example shows that Theorem \ref{thm:measures} does not hold for quasiregular mappings into $\bS^n$. For the construction of the maps $f_j$, see also \cite[Example 24]{Pankka-Duke}.

Let $\iota \colon \R^n \to \bS^n$ be the stereographic projection and let $F\colon \bS^n \to \bS^n$ be the restriction of the map $\R^{n-1} \times \C\to \R^{n-1} \times \C $, $(x,re^{i\theta})\mapsto (x,re^{i3\theta})$. We remark that $F(z)=z$ for points $z$ on the equator $\bS^{n-1}\subset \bS^n$ and that the map $F$ is $K$-quasiregular for $K=K(n)\ge 1$.

For $j\in \N$ and for $i=1,\ldots,j$, let $a_{ij}=(-1+(2i-1)/j,0,\ldots,0)\in \R^n$, $B_{ij}=B^n(a_{ij},1/j)$, and $U_j=\bigcup_{i=1}^j B_{ij}\subset B^n$. Let also $\rho_{ij} \colon \R^n \to \R^n$ be the mapping $x\mapsto j(x-a_{ij})$. Finally, let $\sigma_{ij}$ be the M\"obius transformation
of $\bS^n$ satisfying $\sigma_{ij} \circ \iota = \iota \circ \rho_{ij}$.

We define now $f_j \colon B_2^n \to \bS^n$ to be the mapping $f_j(x)=\iota(x)$ for $x\notin U_j$ and $f_j(x)=\sigma_{ij}^{-1}(F(\sigma_{ij}(\iota(x))))$ for $x\in B_{ij}$ and each $i=1,\ldots,j$. Each map $f_j$ is $K$-quasiregular and, in each ball $B_{ij}$, we have that
\[
\int_{B_{ij}} f_j^* \vol_N = \int_{\sigma_{ij}^{-1}(\bS_+^n)} 2 + \int_{\sigma_{ij}^{-1}(\bS_-^n)} 1  = \vol(\sigma_{ij}^{-1}(\bS_+^n)) + \vol(\bS^n),
\]
where $\bS_+^n$ and $\bS_-^n$ are the upper and lower hemispheres of $\bS^n$, respectively. Hence
\[
\int_{B_2^n} f_j^* \vol_N \le 3j \vol(\bS^n) \le 3\int_{B^n} f_j^* \vol_N.
\]
Thus $f_j \in \cF_{K,3}(\bS^n)$.

It remains to show that the normalized measures $\nu_{f_j}$ converge vaguely to $\nu$. Let $\psi \in C_0(B_2^n)$. Denote
\[
\mu_{f_j} := \left( \int_{U_j} f_j^* \vol_N \right)^{-1} f_j^* \vol_N \quad \text{and} \quad \psi_{ij} := \int_{J\cap B_{ij}} \psi \dn \nu.
\]
Since
\[
\frac{1}{A(f_j)} \int_{B_2^n \setminus U_j} f_j^* \vol_N \le \frac{1}{j}
\]
for each $j$, it suffices to show that
\[
\lim_{j\to \infty} \int_{U_j} \psi \mu_{f_j} = \int_{B_2^n} \psi \dn \nu.
\]

Since $\sigma_{ij}^{-1}(\bS_+^n)=\iota(B^n(a_{ij},1/j))$ and $\iota$ is $\tilde L$-bilipschitz in $B^n$, where $\tilde L =\tilde L(n)$, we have that
\[
C'(n) \left( \frac{1}{\tilde L j} \right)^n + \vol(\bS^n) \le \int_{B_{ij}} f_j^* \vol_N \le C'(n) \left( \frac{\tilde L}{j} \right )^n + \vol(\bS^n)
\]
and thus
\[
\abs{ \int_{B_{ij}} \mu_{f_j} - \frac{1}{j} } \le C(n) \left( \frac{1}{j} \right)^{n+1}.
\]
Hence, we have the estimate
\begin{align*}
\abs{ \int_{U_j} \psi \mu_{f_j} - \int_{B_2^n} \psi \dn \nu } &= \abs{ \sum_{i=1}^j \int_{B_{ij}} \psi \mu_{f_j} - \sum_{i=1}^j \int_{J\cap B_{ij}} \psi \dn \nu }  \\
&\le \sum_{i=1}^j \abs{ \int_{B_{ij}} \psi \mu_{f_j} - \psi_{ij} } \\
&\le \sum_{i=1}^j \abs{ \int_{B_{ij}} (\psi-j\psi_{ij}) \mu_{f_j} } + \abs{ j\psi_{ij} \int_{B_{ij}} \mu_{f_j} - \psi_{ij} } \\
&\le \sum_{i=1}^j \sup_{B_{ij}} \abs{\psi-j\psi_{ij}} \int_{B_{ij}} \mu_{f_j} + \norm{\psi}_\infty \abs{ \int_{B_{ij}} \mu_{f_j} - \frac{1}{j} } \\
&\le \sum_{i=1}^j \sup_{B_{ij}} \abs{\psi-j\psi_{ij}} C(n) \frac{1}{j} + \norm{\psi}_\infty C(n) \left( \frac{1}{j} \right)^{n+1} \\
&\le \max_{1\le i\le j} \sup_{B_{ij}} \abs{\psi-j\psi_{ij}} C(n) + \norm{\psi}_\infty C(n) \left( \frac{1}{j} \right)^n.
\end{align*}
Since $\psi \in C_0(B_2^n)$, it holds that $\max_{1\le i\le j} \sup_{B_{ij}} \abs{\psi-j\psi_{ij}}\to 0$. Thus, we obtain that that the normalized measures $\nu_{f_j}$ converge vaguely to the singular measure $\nu$.

\section{Weak limits of normalized measures}
\label{sec:measures}

In this section, we prove the following result, which yields Corollary \ref{cor:sup} and Theorem \ref{thm:measures} stated in the introduction.

\begin{theorem}
\label{thm:bubbling-corollary}
Let $N$ be a closed, connected, and oriented Riemannian $n$-manifold, $n\ge 2$, which is not a rational homology sphere, i.e.~$H^k(N)\ne 0$ for some $k\in \{1,\ldots, n-1\}$. Then, up to passing to a subsequence, a sequence $(f_j)$ in $\cF_{K,D}(N)$ satisfying $\sup_j A(f_j)<\infty$ converges locally uniformly to a $K$-quasiregular map $f\colon B_2^n \to N$ in $\overline{\cF_{K,D}(N)}$.
\end{theorem}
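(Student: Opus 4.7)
The plan is to convert the hypothesis $\sup_j A(f_j) < \infty$ into a uniform $n$-energy bound on $B_2^n$ and then feed this into a standard equicontinuity-plus-closedness argument for quasiregular mappings.

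First, I would observe that the doubling condition built into $\cF_{K,D}(N)$ and the distortion inequality \eqref{eq:QR} together yield
\begin{equation*}
\int_{B_2^n}\|Df_j\|^n \;\dx \le K\int_{B_2^n} f_j^*\vol_N \le KD\, A(f_j) \le KD \sup_j A(f_j) < \infty,
\end{equation*}
so the $n$-energies of the $f_j$ are uniformly bounded on $B_2^n$.

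Next, I would apply the classical local H\"older-continuity estimate for $K$-quasiregular maps with bounded $n$-energy (Reshetnyak; see \cite{RI1}) to deduce that the family $(f_j)$ is equicontinuous on every compact subset of $B_2^n$. Since $N$ is compact, Arzel\`a--Ascoli produces a subsequence $(f_{j_i})$ converging locally uniformly on $B_2^n$ to a continuous map $f\colon B_2^n \to N$. The standard closedness of $K$-quasiregular maps under locally uniform convergence then shows that $f$ is either a $K$-quasiregular map or a constant map; in either case $f\in \overline{\cF_{K,D}(N)}$, which is the desired conclusion.

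The hypothesis that $N$ is not a rational homology sphere enters via Theorem \ref{thm:measures}: after passing to a further subsequence, either $(f_{j_i})$ converges to a constant or the normalized measures $\nu_{f_{j_i}}$ converge vaguely to an absolutely continuous measure on $B_2^n$. This dichotomy is consistent with the locally uniform convergence produced above and ties the limiting behaviour of $(f_{j_i})$ to that of the area functional $A$.

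The main obstacle I would expect is transferring the classical H\"older-continuity estimate from the Euclidean-target setting of \cite{RI1} to the closed-manifold target $N$. This is handled by working in local geodesic charts on $N$ and using compactness of $N$ to obtain uniform constants; while the argument is routine, some care is needed to verify that the H\"older exponent and constants depend only on $K$, $D$, $n$, and $\sup_j A(f_j)$, and not on the individual maps $f_j$.
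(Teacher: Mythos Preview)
Your argument has a genuine gap: a uniform bound on the $n$-energy does \emph{not}, by itself, give equicontinuity of a family of $K$-quasiregular maps into a closed manifold. Consider the conformal dilations $g_j\colon B_2^n\to\bS^n$, $x\mapsto\iota(jx)$, where $\iota$ is stereographic projection. These are $1$-quasiregular, satisfy $\int_{B_2^n}\|Dg_j\|^n = \int_{B_2^n} g_j^*\vol_{\bS^n}\le \vol(\bS^n)$, and belong to $\cF_{1,D}(\bS^n)$ for suitable $D$, yet they are not equicontinuous at the origin: the image of any ball $B^n(0,\varepsilon)$ under $g_j$ eventually covers almost all of $\bS^n$. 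This is exactly the bubbling phenomenon. The Reshetnyak--Morrey H\"older estimate you invoke has the form $\mathrm{osc}(f,B(x,r))\le C(n,K)(r/R)^\alpha\,\mathrm{osc}(f,B(x,R))$; it does not convert an energy bound into a modulus of continuity, and the critical Sobolev embedding $W^{1,n}\hookrightarrow C^{0,\alpha}$ fails.

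The hypothesis that $N$ is not a rational homology sphere is therefore not decorative: it is precisely what rules out bubbles. The paper proceeds via the Gromov-type compactness theorem for quasiregular maps (Theorem~\ref{thm:bubbling}): a subsequence converges locally uniformly on a nodal manifold whose main stratum is $B_2^n$ and whose other strata are $n$-spheres. On each spherical stratum $S$ the limit $f|_S\colon S\to N$ is quasiregular, and since $N$ is not a rational homology sphere, Poincar\'e duality forces $\deg(f|_S)=0$, hence $f|_S$ is constant. This kills all bubbles, and the convergence is locally uniform on $B_2^n$ itself. Note also that your appeal to Theorem~\ref{thm:measures} is circular: in the paper that theorem is deduced \emph{from} Theorem~\ref{thm:bubbling-corollary}.
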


Theorem \ref{thm:bubbling-corollary} implies Corollary \ref{cor:sup} almost immediately. 

\begin{named}{Corollary \ref{cor:sup}}
Let $N$ be a closed, connected, and oriented Riemannian $n$-manifold, $n\ge 2$. If $H^*_{\dR}(N)$ is not a subalgebra of $\bigwedge^* \R^n$, then $\overline{\cF_{K,D}(N)}$ is compact with respect to the topology of local uniform convergence. In particular, in this case, $\cF_ {K,D}(N)$ is a normal family.
\end{named}

\begin{proof}
Since the closure $\overline{\cF_{K,D}(N)}$ of quasiregular mappings in $\cF_{K,D}(N)$ in the topology of local uniform convergence consists of $\cF_{K,D}(N)$ and constant mappings $B_2^n \to N$, it suffices to show that a sequence in $\cF_{K,D}(N)$ has a subsequence converging either to a mapping in $\cF_{K,D}(N)$ or to a constant mapping.

Since $H_{\dR}^*(N)$ is not a subalgebra of $\bigwedge^* \R^n$, we have that 
\[
\sup_{f\in \cF_{K,D}(N)} A(f)<\infty
\]
by Theorem \ref{thm:sup} and that $N$ is not a rational homology sphere. Thus, by Theorem \ref{thm:bubbling-corollary}, each sequence $(f_j)$ in $\cF_{K,D}(N)$ has a subsequence $(f_{j_i})$ converging locally uniformly to a (possibly constant) quasiregular map $B_2^n \to N$ in $\overline{\cF_{K,D}(N)}$. 
We conclude that the family $\overline{\cF_{K,D}(N)}$ is compact.
\end{proof}

As a consequence of Theorem \ref{thm:bubbling-corollary}, we obtain also Theorem \ref{thm:measures}.

\begin{named}{Theorem \ref{thm:measures}}
Let $N$ be a closed, connected, and oriented Riemannian $n$-manifold, $n \ge 2$, which is not a rational homology sphere. 
Then a sequence $(f_j)$ in $\cF_{K,D}(N)$ has a subsequence $(f_{j_i})$ for which either $(f_{j_i})$ converges to a constant map or the normalized measures $\nu_{f_{j_i}}$ converge vaguely to a measure $\nu \ll \vol_{\R^n}$ in $B_2^n$.
\end{named}

\begin{proof}
If $\sup_j A(f_j)=\infty$, then there exists a subsequence $(f_{j_i})$ for which the operators $(f_{j_i}^\#)$ have a Sobolev--Poincaré limit $\widehat{L}$  and, by Theorem \ref{thm:limit-measure}, $\nu_{f_{j_i}} \weakto \nu_{d\widehat{L}}$ vaguely as measures, where $\nu_{d\widehat{L}} \ll \vol_{\R^n}$.

Suppose now that $\sup_j A(f_j)<\infty$. By Theorem \ref{thm:bubbling-corollary}, there exists a subsequence $(f_{j_i})$ converging locally uniformly to a map $f\in \overline{\cF_{K,D}(N)}$. If $f$ is non-constant, then, by the weak convergence of the Jacobians (see e.g. \cite[Lemma VI.8.8]{Rickman-book}), we have that $\nu_{f_{j_i}} \weakto \nu_f$ vaguely as measures, where $\nu_f  \ll \vol_{\R^n}$. 
\end{proof}

For the rest of this section, we will describe the idea of the proof of Theorem \ref{thm:bubbling-corollary}. This theorem is a particular case of the following version of the Gromov compactness theorem for quasiregular mappings; see \cite[Theorem 1.1]{Pankka-Souto}.

\begin{theorem}
\label{thm:bubbling}
Let $N$ be a closed, connected, and oriented Riemannian $n$-manifold, $n\ge 2$. Let $(f_j)$ be a sequence in $\cF_{K,D}(N)$ for which $\sup_j A(f_j)<\infty$. Then, up to passing to a subsequence, $(f_j)$ converges locally uniformly to a $K$-quasiregular map $f\colon X\to N$, for which $J_f \in L^1(X)$, on some nodal Riemannian $n$-manifold $X$. Furthermore, the main stratum of $X$ is diffeomorphic to $B_2^n$ and all of the bubbles of $X$ are conformal to $\bS^n$.
\end{theorem}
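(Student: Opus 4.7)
The plan is to adapt the Sacks--Uhlenbeck bubble-tree construction, familiar from the compactness theory of harmonic maps and pseudoholomorphic curves, to the quasiregular setting. First, combining the distortion inequality $\norm{Df_j}^n \le K J_{f_j}$ with the doubling condition yields
\[
\int_{B_2^n} \norm{Df_j}^n \le KD \sup_j A(f_j) < \infty,
\]
so the sequence has uniformly bounded $n$-energy. In particular, after passing to a subsequence, the measures $\mu_j = f_j^* \vol_N$ converge vaguely to a finite Radon measure $\mu$ on $B_2^n$.

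Next, I would identify the finite concentration set $S = \{x \in B_2^n : \mu(\{x\}) \ge \varepsilon_0\}$, where $\varepsilon_0 = \varepsilon_0(n,K) > 0$ is chosen below the universal lower bound on the $n$-energy of any non-constant $K$-quasiregular map $\bS^n \to N$ (the existence of such a gap is a normal-family consequence of Rickman's Picard-type theorems and the compactness of the target). Finiteness of $S$ follows from the total mass bound. Away from $S$, the local energies of the $f_j$ are eventually below $\varepsilon_0$, and the quasiregular analogue of $\varepsilon$-regularity (via Poletsky's inequality and Rickman's modulus-of-continuity estimates) gives uniform H\"older equicontinuity on compact subsets of $B_2^n \setminus S$. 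An Arzel\`a--Ascoli plus diagonal extraction then yields a locally uniform limit $f$ that is $K$-quasiregular on $B_2^n \setminus S$; since $f$ is bounded into the compact $N$ and has finite $n$-energy, the standard removable-singularity theorem for quasiregular mappings extends $f$ $K$-quasiregularly across $S$, giving the map on the main stratum $B_2^n$.

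At each $x_0 \in S$ I would extract bubbles by conformal rescaling: pick scales $\lambda_j \to 0$ and centers $y_j \to x_0$ via an almost-maximum-point selection so that $g_j(z) = f_j(y_j + \lambda_j z)$ carries exactly the threshold energy $\varepsilon_0$ on $B^n$ without further concentration at the origin. Conformal invariance of the $n$-energy and of the distortion constant means the preceding paragraph applies to $(g_j)$ on every compact subset of $\R^n$, producing a $K$-quasiregular limit $\R^n \to N$; removability of isolated singularities together with the conformal removability of $\infty$ extends this to a non-constant $K$-quasiregular map $\bS^n \to N$, a bubble. Iterating the rescaling recursively at each residual concentration point produces a tree of bubbles which terminates after finitely many steps since each bubble absorbs at least $\varepsilon_0$ of energy and the total energy is bounded. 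Gluing the main-stratum limit to the bubble spheres at the corresponding nodes yields the nodal Riemannian $n$-manifold $X$ with main stratum diffeomorphic to $B_2^n$ and bubbles conformal to $\bS^n$; the bound $J_f \in L^1(X)$ follows by additivity of energy along the tree and the initial uniform bound.

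The principal obstacle, and the step that genuinely distinguishes the quasiregular Gromov compactness from mere Arzel\`a--Ascoli, is the \emph{no-neck/no-energy-loss} property: one must show that every scale of the bubble tree captures all the concentrating energy and that the values of $f$ at the node and of the bubble at $\infty$ agree, so that no positive quantum of energy escapes into the annular neck regions between rescalings. For pseudoholomorphic curves this is handled via Gromov's monotonicity and the isoperimetric inequality; here I would instead use the capacity/modulus form of the $K_O$-inequality (the conformal capacity of $f(A)$ for a ring domain $A$ is bounded by $K$ times the capacity of $A$), combined with a quantitative lower bound for the capacity of any pair of distinct points in the compact target $N$, to force annular energy decay and matching images across each node. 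This decay estimate is both the crux of the theorem and the key place where the compactness of $N$ is used.
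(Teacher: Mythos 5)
Your sketch is, in outline, the right argument, but it is worth noting that the paper does not prove Theorem \ref{thm:bubbling} from scratch: it invokes the Gromov compactness theorem for quasiregular mappings of \cite[Theorem 1.1]{Pankka-Souto} (stated there for maps of fixed degree between closed manifolds) and observes that replacing the degree bound by the energy bound $\sup_j A(f_j)<\infty$ only requires modifying \cite[Proposition 3.1]{Pankka-Souto}, restated here as Proposition \ref{prop:poles}. What you have written is essentially a reconstruction of that bubbling argument, so in spirit the two routes coincide. Two points of comparison. First, your energy gap for bubbles is obtained more directly than via Rickman's Picard-type theorems: a non-constant quasiregular map $\bS^n\to N$ between closed oriented manifolds has degree at least one, hence energy at least $\vol(N)$; this is exactly why the concentrated mass in Proposition \ref{prop:poles} is quantized as $\mu(p)=d_p\cdot\vol(N)$ with $d_p\ge 1$. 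The $\varepsilon$-regularity you need away from the concentration set then comes from the observation that local energy below $\vol(N)$ forces the image to omit a set of definite volume, hence of definite conformal capacity, and Rickman's capacity estimates give the equicontinuity. Second, your framing of a full ``no-energy-loss'' identity as the crux overstates what the theorem asserts: the conclusion is only that $J_f\in L^1(X)$, which follows from lower semicontinuity of the energy, and the paper explicitly emphasizes that --- in contrast with the fixed-degree setting of \cite{Pankka-Souto} --- energy loss can occur here, since maps in $\cF_{K,D}(N)$ may degenerate to constants. What the neck analysis must genuinely deliver is the matching of values across each node (so that $f$ is continuous on the nodal manifold $X$) and the termination of the bubble tree via quantization; both follow from the modulus-of-annuli estimates you propose, and your plan would also need the standard provision for ghost bubbles when several finer-scale concentration points collide under a single rescaling.
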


\newcommand{\Sing}{\mathrm{Sing}}
Recall that a (pure) nodal $n$-manifold $X$ is a topological space, which in the complement of a finite set $\Sing(X) \subset X$ is an $n$-manifold, each point in $\Sing(X)$ separates $X$ into two components, and the local structure of $X$ near a point $p\in \Sing(X)$ is modeled by two $n$-dimensional planes in general position in $\R^n \times \R^n$. The closure of each connected component of $X\setminus \Sing(X)$ is a called a \emph{stratum}. In Theorem \ref{thm:bubbling}, each strata is a smooth $n$-manifold and, more precisely, one stratum is diffeomorphic to $B_2^n$ and all other strata are diffeomorphic to $n$-spheres $\bS^n$. We refer to \cite[Section 4]{Pankka-Souto} for further discussion.

In \cite{Pankka-Souto} this convergence theorem is stated for $K$-quasiregular mappings $f_j \colon M\to N$ of fixed degree $\deg f_j$ between closed manifolds and it was shown that the limit map $f \colon X\to N$ has the same degree in the nodal sense; see \cite[Theorem 1.1]{Pankka-Souto} for a precise statement.

In Theorem \ref{thm:bubbling}, the condition on the degree is replaced by a uniform bound for the energy $A(f_j)$. The main difference is that a sequence $(f_j)$ in $\cF_{K,D}(N)$ may converge to a constant map and hence we may observe a loss of energy $A(f_j)$ at the limit. Indeed, it suffices to replace \cite[Proposition 3.1]{Pankka-Souto} by the following statement, whose proof is almost verbatim to the proof of \cite[Proposition 3.1]{Pankka-Souto}.

\begin{proposition}{\cite[Proposition 3.1]{Pankka-Souto}}
\label{prop:poles}
Let $N$ be a closed and oriented Riemannian $n$-manifold and let $(f_j)$ be a sequence in $\cF_{K,D}(N)$ satisfying $\sup_j A(f_j)<\infty$. Then, up to passing to a subsequence, there exists a finite set $P\subset B_2^n$ with the following properties:
\begin{itemize}
\item[\emph{(1)}] The measures $f_j^* \vol_N$ converge vaguely to a measure $\mu$ satisfying $\mu(B_2^n)\le D \cdot \sup_j A(f_j)$.
\item[\emph{2}] There exists a $K$-quasiregular map $f\colon B_2^n \to N$ with $f^* \vol_N= \mu$ on $B_2^n \setminus P$.
\item[\emph{3}] The sequence $(f_j)$ converges locally uniformly to $f$ in $B_2^n \setminus P$.
\item[\emph{4}] For each $p\in P$, we have $\mu(p)=d_p \cdot \vol(N)$ for some integer $d_p \ge 1$.
\end{itemize}
\end{proposition}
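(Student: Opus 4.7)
The plan is a concentration--compactness scheme in the style of \cite[Proposition 3.1]{Pankka-Souto}, modified only cosmetically in our setting. First, set $\mu_j = f_j^* \vol_N$. The doubling condition together with the uniform area bound gives $\mu_j(B_2^n) \le D \sup_j A(f_j) < \infty$, so $(\mu_j)$ is bounded in the space of non-negative Radon measures on $B_2^n$. By weak-$*$ compactness one passes to a subsequence so that $\mu_j \weakto \mu$ vaguely for some non-negative Radon measure $\mu$ satisfying the mass bound in (1). Define the concentration set
\[
P = \{ x \in B_2^n : \mu(\{x\}) \ge \vol(N) \}.
\]
Since $\mu$ is finite on $B_2^n$, the set $P$ is finite. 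The threshold $\vol(N)$ is dictated by the degree formula: any potential ``bubble'' forming at a concentration point will be a non-constant quasiregular map $\bS^n \to N$, whose pullback of $\vol_N$ has total mass a positive integer multiple of $\vol(N)$.

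Next, I will show that away from $P$ the $\mu_j$ do not concentrate, and this will force local equicontinuity. Concretely, for a compact $E \subset B_2^n \setminus P$ and a small $\eta_0 = \eta_0(n,K) < \vol(N)$, I cover $E$ by balls on which $\limsup_j \int f_j^* \vol_N < \eta_0$; such a cover exists because $\mu$ has no atom on $E$. On each such ball, Rickman's normality criterion for quasiregular maps (see \cite[Corollary IV.3.14]{RI1}) yields equicontinuity of $(f_j)$. A diagonal argument on an exhaustion of $B_2^n \setminus P$ then produces a subsequence converging locally uniformly on $B_2^n \setminus P$ to a $K$-quasiregular limit $f$, using the standard closedness of $K$-quasiregular maps under locally uniform convergence with locally uniformly bounded energy. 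Since $P$ has vanishing conformal capacity and $f$ takes values in the compact manifold $N$, the usual removable singularity theorem for quasiregular maps extends $f$ to a $K$-quasiregular map $B_2^n \to N$. Weak convergence of Jacobians (cf.\ \cite[Lemma VI.8.8]{Rickman-book}) identifies $f^* \vol_N$ with $\mu$ on $B_2^n \setminus P$, giving (2) and (3).

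Finally, for (4), fix $p \in P$ and blow up at $p$. I will choose scales $r_j \to 0$ so that the rescaled maps $g_j(x) = f_j(p + r_j x)$ capture a prescribed fixed portion of $\mu(\{p\})$ on the unit ball and are $K$-quasiregular on domains exhausting $\R^n$ with uniformly bounded area on every compact set. Re-running the concentration--compactness scheme for $(g_j)$ produces in the limit a non-constant $K$-quasiregular map $\R^n \to N$ of finite area, which by capacity-zero removability at $\infty$ extends to a $K$-quasiregular map $\bS^n \to N$ of positive integer degree; the degree formula then identifies the captured mass as an integer multiple of $\vol(N)$. Iterating the bubbling --- each iteration absorbs at least $\vol(N)$ of mass, so the process must terminate after finitely many steps by the total mass bound --- one decomposes $\mu(\{p\})$ as $d_p \cdot \vol(N)$ with $d_p \ge 1$ an integer. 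The main obstacle will be this quantization step: it requires an inductive tree-of-scales bookkeeping ensuring that no mass is lost between scales and that every bubble is accounted for, which is the combinatorial heart of \cite[Proposition 3.1]{Pankka-Souto}.
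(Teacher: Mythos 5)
The paper does not actually prove this proposition: it states that the proof is ``almost verbatim'' that of \cite[Proposition 3.1]{Pankka-Souto}, with the fixed-degree hypothesis replaced by the uniform energy bound and with the possibility of a constant limit. Your plan follows the same concentration--compactness and bubbling route, so at the level of strategy you and the paper (i.e.\ the cited source) agree.

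There is, however, one step in your outline that fails as written. You define $P = \{x : \mu(\{x\}) \ge \vol(N)\}$ and then cover a compact $E \subset B_2^n \setminus P$ by balls on which $\limsup_j \int f_j^*\vol_N < \eta_0$, justifying this by saying that ``$\mu$ has no atom on $E$''. A priori $\mu$ may perfectly well have atoms on $E$ of mass in $[\eta_0, \vol(N))$, and at such a point every ball has $\liminf_j$-mass at least $\eta_0$ by vague convergence, so the asserted cover need not exist and the equicontinuity argument breaks down. The concentration set must be defined with the normality threshold, $P = \{x : \mu(\{x\}) \ge \eta_0\}$ (still finite, since $\mu(B_2^n) < \infty$); only after the blow-up analysis of step (4) do you learn that every such atom in fact has mass at least $\vol(N)$, so that the two definitions coincide a posteriori. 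Using the larger threshold from the outset makes the argument circular. Two smaller remarks: removability of the punctures $p \in P$ for the limit map relies on the finiteness of the energy of $f$ near $p$ (available from $\mu(B_2^n) < \infty$ and lower semicontinuity), not merely on $P$ having zero capacity and $N$ being compact --- a quasiregular map into a closed manifold can have an essential isolated singularity. And the quantization $\mu(\{p\}) = d_p \vol(N)$, including the no-loss-of-mass bookkeeping between scales, is only named in your plan; that is precisely the part the paper also declines to reproduce and outsources to \cite{Pankka-Souto}.
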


We continue now to the proof of Theorem \ref{thm:bubbling-corollary}.

\begin{proof}[Proof of Theorem \ref{thm:bubbling-corollary}]
Let $(f_j)$ be a sequence in $\cF_{K,D}(N)$ with $\sup_j A(f_j)<\infty$. Then, by Theorem \ref{thm:bubbling}, the sequence $(f_j)$ has a subsequence $(f_{j_i})$, which converges to a $K$-quasiregular mapping $f\colon X\to N$, where $X$ is a nodal $n$-manifold having $B_2^n$ as its main stratum and (possibly) $n$-spheres as other strata. 

Let $S \subset X$ be a stratum in $X$, which is an $n$-sphere. Then $f|_S \colon S\to N$ is a quasiregular map. Since $N$ is not a rational homology sphere, we have -- by the Poincar\'e duality -- that $\deg(f|_S \colon S\to N) = 0$. Thus $f|_S$ is constant. Hence $f$ is a constant map outside the main stratum of $X$. Thus $d_p = 0$ for all $p\in \Sing(X)$. We conclude that $(f_{j_i})$ converges locally uniformly to a $K$-quasiregular map $B_2^n \to N$. Moreover, we have
\[
\int_{B_2^n} f^* \vol_N \le \liminf_{j\to \infty} \int_{B_2^n} f_j^* \vol_N \le D\lim_{j\to \infty} \int_{B^n} f_j^* \vol_N = D\int_{B^n} f^* \vol_N
\]
by the weak convergence of the Jacobians. This concludes the proof.
\end{proof}

\appendix

\section{Donaldson--Freedman--Serre classification of closed $4$-manifolds}
%\section{Donaldson--Freedman--Serre classification of smooth, closed, simply connected, and oriented $4$-manifolds}
\label{app:classification}

In this appendix, we recall the part of the topological classification of smooth, closed, simply connected, and oriented $4$-manifolds we use for Corollary \ref{cor:classification}. We also give upper bounds for the dimensions of positive and negative definite subspaces of the intersection form of a $4m$-manifold whose de Rham algebra embeds as a subalgebra of $\bigwedge^* \R^{4m}$.

We begin by recalling the definition of an intersection form. Let $N$ be a closed, connected, and oriented $4m$-manifold. The \emph{intersection form of $N$} is the bilinear form
\[
Q_N \colon H^{2m}(N) / \tor H^{2m}(N) \times H^{2m}(N) / \tor H^{2m}(N) \to \Z,
\]
\[
(u,v)\mapsto (u\smile v)([N]),
\]
where $[N]\in H_{2m}(N)$ is the fundamental class of $N$ associated to the orientation of $N$. Let $\beta_{2m}^+(N)$ and $\beta_{2m}^-(N)$ denote the maximal dimensions of vector subspaces of $H^{2m}(N;\R)$ in which $Q_N$ is positive and negative definite, respectively.

For $m=1$, Freedman \cite[Theorem 1.5]{FR} implies that smooth, closed, simply connected, and oriented $4$-manifolds are topologically classified by their intersection forms, i.e., two manifolds are homeomorphic if and only if they have the same intersection form. On the other hand, theorems of Donaldson \cite{DO} and Serre \cite[Section V.2.2]{SE} yield a list of symmetric, unimodular, and bilinear forms which contains every intersection form of a smooth, closed, simply connected, and oriented $4$-manifold. To avoid unnecessary technicalities, we do not recite the Donaldson--Serre list in detail, but observe that we obtain the following table of smooth, closed, simply connected, and oriented $4$-manifolds $N$ for which $\beta_{2m}^+(N) \le 3$ and $\beta_{2m}^-(N) \le 3$:

\vspace{0.25cm}
\begin{center}
\renewcommand*{\arraystretch}{1.4}
\begin{tabular}{|c|c|c|c|c|}
\hline
\diagbox{$\beta_2^-$}{$\beta_2^+$} & 0 & 1 & 2 & 3 \\
\hline
0 & $\bS^4$ & $\C P^2$ & $\#^2 \C P^2$ & $\#^3 \C P^2$ \\
\hline
1 & $\overline{\C P^2}$ & $\C P^2 \# \overline{\C P^2}$, & $\#^2 \C P^2 \# \overline{\C P^2}$ & $\#^3 \C P^2 \# \overline{\C P^2}$ \\
 & & $\bS^2 \times \bS^2$ & & \\
\hline
2 & $\#^2 \overline{\C P^2}$ & $\C P^2 \#^2 \overline{\C P^2}$ & $\#^2 \C P^2 \#^2 \overline{\C P^2}$, & $\#^3 \C P^2 \#^2 \overline{\C P^2}$ \\
 & & & $\#^2 \bS^2 \times \bS^2$ & \\
\hline
3 & $\#^3 \overline{\C P^2}$ & $\C P^2 \#^3 \overline{\C P^2}$ & $\#^2 \C P^2 \#^3 \overline{\C P^2}$ & $\#^3 \C P^2 \#^3 \overline{\C P^2}$, \\
 & & & & $\#^3 \bS^2 \times \bS^2$ \\
\hline
\end{tabular}
\end{center}
\vspace{0.25cm}

For the classification of closed simply connected quasiregularly elliptic $4$-manifolds $N$ it therefore suffices to observe that $\beta_{2m}^{\pm}(N) \le 3$. The following lemma records this simple algebraic consequence of Theorem \ref{thm:main}.

\begin{lemma}
\label{lem:bounds}
Let $N$ be a smooth, closed, connected, and oriented $4m$-manifold and let $\Phi \colon H^*(N,\R)\to \bigwedge^* \R^{4m}$ be an embedding of graded algebras. Then
\[
\beta_{2m}^{\pm}(N) \le \frac{1}{2} \binom{4m}{2m}.
\]
\end{lemma}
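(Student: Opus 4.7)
The plan is to turn the algebra embedding $\Phi$ into a comparison of the intersection form $Q_N$ with the natural wedge pairing on $\bigwedge^{2m}\R^{4m}$, and then compute the signature of the latter.

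First I would fix a generator $[N]^{*}\in H^{4m}(N,\R)$ dual to the fundamental class, and a volume form $\vol\in\bigwedge^{4m}\R^{4m}$, so that for $u,v\in H^{2m}(N,\R)$ one has $u\smile v=Q_N(u,v)\,[N]^{*}$. Since $\Phi$ is injective and $H^{4m}(N,\R)$ is one-dimensional, there is some nonzero $c\in\R$ with $\Phi([N]^{*})=c\,\vol$. Define the symmetric bilinear form
\[
B\colon {\bigwedge}^{2m}\R^{4m}\times{\bigwedge}^{2m}\R^{4m}\to\R,\qquad \alpha\wedge\beta=B(\alpha,\beta)\,\vol.
\]
Because $\Phi$ is an algebra homomorphism, $\Phi(u)\wedge\Phi(v)=\Phi(u\smile v)=c\,Q_N(u,v)\,\vol$, i.e., the pull-back of $B$ under the injection $\Phi|_{H^{2m}(N,\R)}$ equals $c\cdot Q_N$. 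Hence the positive and negative definite subspaces of $Q_N$ (or of $c\cdot Q_N$) inject into those of $B$.

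The main step is then to show that $B$ has signature $\bigl(\tfrac{1}{2}\binom{4m}{2m},\tfrac{1}{2}\binom{4m}{2m}\bigr)$. Using the Euclidean Hodge star on $\bigwedge^{2m}\R^{4m}$ one has $\star^{2}=(-1)^{2m(4m-2m)}=1$, and $B(\alpha,\beta)=\langle\alpha,\star\beta\rangle$ where $\langle\cdot,\cdot\rangle$ is the standard inner product. Decomposing $\bigwedge^{2m}\R^{4m}=V_{+}\oplus V_{-}$ into the $\pm1$-eigenspaces of $\star$, the form $B$ restricts to $\pm\langle\cdot,\cdot\rangle$ on $V_{\pm}$, so it is positive (resp.\ negative) definite there and the two eigenspaces are $B$-orthogonal. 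That $\dim V_{+}=\dim V_{-}$ follows because in the standard monomial basis $\{e_{I}\}_{|I|=2m}$ the star operator satisfies $\star e_{I}=\pm e_{I^{c}}$ with $I\neq I^{c}$, so its trace vanishes; equivalently one observes directly that the wedge pairing in this basis is a block-diagonal sum of $2\times 2$ hyperbolic blocks indexed by unordered pairs $\{I,I^{c}\}$, each of signature $(1,1)$.

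With this signature computation in hand, the conclusion is immediate: regardless of the sign of $c$, the form $c\cdot Q_N=\Phi^{*}B|_{H^{2m}(N,\R)}$ has both positive and negative definite subspaces of dimension at most $\tfrac{1}{2}\binom{4m}{2m}$, and the same bounds apply to $Q_N$ itself since the signature of $B$ is symmetric. I do not anticipate any serious obstacle; the only delicate points are keeping the roles of $\beta^{+}$ and $\beta^{-}$ straight when $c<0$ (harmless thanks to the symmetric signature) and justifying the signature of $B$, which is handled cleanly by either the Hodge-star argument or the hyperbolic-block decomposition above.
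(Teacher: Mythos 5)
Your proposal is correct and follows essentially the same route as the paper: both pull back the wedge pairing on $\bigwedge^{2m}\R^{4m}$ along $\Phi$, observe that the pullback is a nonzero scalar multiple of $Q_N$, and then bound the definite subspaces of that pairing by $\frac{1}{2}\binom{4m}{2m}$. The only difference is that you actually justify the signature $\bigl(\frac{1}{2}\binom{4m}{2m},\frac{1}{2}\binom{4m}{2m}\bigr)$ of the wedge pairing (via the Hodge star involution or the hyperbolic blocks indexed by complementary multi-indices), a fact the paper invokes without proof; both of your justifications are valid.
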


\begin{proof}
Let $V$ be a definite vector subspace of $Q_N$ in $H^{2m}(N;\R)$ and let $\ccR \colon H^*(N;\R)\to H_{\dR}^*(N)$ be the de Rham isomorphism. Note that $\ccR$ is an isomorphism of graded algebras; see e.g. \cite[Theorem III.3.1]{BR}.

Let $I\colon \bigwedge^{2m} \R^{4m} \to \R$ be the bilinear form $(\omega,\tau)\mapsto \int_{\R^{4m}} \omega \wedge \tau$. The bilinear form $I$ is either positive or negative definite in $\Phi(\ccR(V))$ since
\[
I(\Phi(\ccR(u)),\Phi(\ccR(u))) = \int_{\R^{4m}} \Phi(\ccR u\wedge \ccR u) = \frac{\int_{\R^{4m}} \Phi([\vol_N])}{\vol(N)} Q_N(u,u),
\]
where $\int_{\R^{4m}} \Phi([\vol_N])\ne 0$. Since a definite vector subspace of $I$ in $\bigwedge^{2m} \R^{4m}$ has dimension at most $\frac{1}{2} \binom{4m}{2m}$, we obtain that
\[
\dim V=\dim \Phi(\ccR(V))\le \frac{1}{2} \binom{4m}{2m}.
\]
The claim follows.
\end{proof}

\section{Quasiregularity of piecewise linear maps}
\label{app:PL}

In this appendix, we discuss the quasiregularity of piecewise linear maps. We begin by recalling the definition of a smooth triangulation. Let $N$ be a smooth manifold with a piecewise linear structure. A piecewise linear homeomorphism $\kappa \colon P\to N$ from a Euclidean polyhedron $P$ into $N$ is a \emph{piecewise smooth} if there exists a locally finite simplicial complex $\Sigma$ having the properties that $P=|\Sigma|$ and, for each $\sigma \in \Sigma$, there exist a neighbourhood $U\subset \R^{\dim \sigma}$ of $[0,e_1,\ldots,e_{\dim \sigma}]$, a smooth map $\lambda \colon U\to N$ of rank $\dim \sigma$, and an affine map $A\colon [0,e_1,\ldots,e_{\dim \sigma}]\to \sigma$ satisfying $\lambda|_{[0,e_1,\ldots,e_{\dim \sigma}]}=\kappa \circ A$. We say that the piecewise linear structure of $N$ is compatible with the smooth structure of $N$ if there exists a piecewise linear homeomorphism, which is piecewise smooth, from a Euclidean polyhedron into $N$.

We are now ready to state the main result of this appendix.

\begin{proposition}
\label{prop:pl-is-qr}
Let $M$ and $N$ be closed and oriented piecewise linear Riemannian $n$-manifolds, $n\ge 2$. Let $p\colon M\to N$ be a non-degenerate orientation preserving piecewise linear map. If the piecewise linear structure is compatible with the smooth structure on both $M$ and $N$, then $p$ is a quasiregular map.
\end{proposition}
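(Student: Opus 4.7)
The plan is to verify directly the three defining properties of quasiregularity (continuity, $W^{1,n}_{\loc}$--regularity, and the distortion inequality) by reducing everything to a simplex-by-simplex analysis, using the PL structure to find an affine normal form for $p$ and the compatibility with the smooth structure to interpret that normal form in Riemannian terms. By a standard subdivision argument, which preserves the piecewise smoothness since the smooth maps $\lambda$ extend to open neighborhoods of each standard simplex and therefore restrict nicely to subdivisions, we choose simplicial complexes $\Sigma_M$, $\Sigma_N$ and PL homeomorphisms $\kappa_M \colon |\Sigma_M| \to M$, $\kappa_N \colon |\Sigma_N| \to N$ both satisfying the piecewise smoothness condition of the preceding definition, and fine enough that $\tilde p := \kappa_N^{-1} \circ p \circ \kappa_M$ is simplicial. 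Continuity of $p$ is immediate; since $p$ is piecewise affine on the compact $M$, it is globally Lipschitz, hence, after composing with a smooth embedding $N \hookrightarrow \R^m$, it lies in $W^{1,\infty}(M,N) \subset W^{1,n}_{\loc}(M,N)$. By Rademacher's theorem the weak differential $Dp$ agrees almost everywhere with the classical differential computed on the interior of each top-dimensional simplex.

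For the distortion inequality, fix a top-dimensional $\sigma \in \Sigma_M$ with image $\tau := \tilde p(\sigma) \in \Sigma_N$, which is again top-dimensional by non-degeneracy. Piecewise smoothness provides affine isomorphisms $A_\sigma \colon \Delta \to \sigma$ and $A_\tau \colon \Delta \to \tau$ from the standard $n$-simplex $\Delta$, together with smooth rank--$n$ maps $\lambda_\sigma$ and $\lambda_\tau$ from open neighborhoods of $\Delta$ into $M$ and $N$, extending $\kappa_M \circ A_\sigma$ and $\kappa_N \circ A_\tau$ respectively. The map $\psi := A_\tau^{-1} \circ \tilde p|_\sigma \circ A_\sigma \colon \Delta \to \Delta$ is affine, and, by the non-degeneracy and orientation-preservation of $p$, satisfies $\det \psi > 0$. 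Consequently on $\kappa_M(\inter \sigma)$ we have the local identity
\[
p = \lambda_\tau \circ \psi \circ \lambda_\sigma^{-1},
\]
where $\lambda_\sigma$ is a local diffeomorphism onto its image by the rank condition and the inverse function theorem. Thus $p$ is smooth on $\kappa_M(\inter \sigma)$, and $Dp$ extends continuously to the compact closure $\kappa_M(\sigma)$; moreover the Jacobian $J_p$ extends to a strictly positive continuous function on $\kappa_M(\sigma)$, since all three factors of $Dp$ are invertible linear maps of positive determinant and the Riemannian metrics on $M$, $N$ are smooth. By compactness, the continuous ratio $\|Dp\|^n / J_p$ admits a finite maximum $K_\sigma$ on $\kappa_M(\sigma)$.

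Since $\Sigma_M$ has only finitely many top-dimensional simplices, setting $K := \max_\sigma K_\sigma$ yields the uniform distortion inequality $\|Dp\|^n \le K J_p$ almost everywhere in $M$. The main subtlety lies in the first step: arranging triangulations that simultaneously realize the piecewise smoothness condition on both $M$ and $N$ and render $\tilde p$ simplicial; once this is in place, everything reduces to a uniform-continuity argument on compact simplices on which all relevant quantities are strictly positive, bounded, and continuous.
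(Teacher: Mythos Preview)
Your argument is correct and follows essentially the same route as the paper: set up compatible triangulations so that $\kappa_N^{-1}\circ p\circ\kappa_M$ is simplicial, then on each top simplex write $p$ as a composition of the smooth charts $\lambda_\sigma$, $\lambda_\tau$ with an orientation-preserving affine isomorphism, and use compactness of the finitely many closed simplices to bound the distortion ratio uniformly. The only notable variation is in the Sobolev step---you invoke Lipschitz continuity and Rademacher, whereas the paper isolates a separate lemma and verifies $W^{1,\infty}$ directly via Stokes' theorem on simplices with corners; note also that your phrase ``$p$ is piecewise affine on $M$'' is slightly imprecise (it is $\tilde p$ that is piecewise affine, while $p$ is only piecewise smooth), but the Lipschitz conclusion you draw from it is unaffected.
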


The following result yields the Sobolev-regularity in Proposition \ref{prop:pl-is-qr}. While the result is well-known to experts, we present its proof for the reader's convenience.

\begin{lemma}
\label{lem:sobolev}
Let $M$ be a closed piecewise linear smooth $n$-manifold and let $\kappa \colon P\to M$ be a piecewise linear homeomorphism, which is piecewise smooth. Let $\Sigma$ be a locally finite simplicial complex satisfying $P=|\Sigma|$ and let $\rho \colon M\to \R$ be a continuous function having the property that, for each $n$-simplex $\sigma \in \Sigma$, the restriction $\rho|_{\kappa(\sigma)}$ has a smooth extension. Then $\rho \in W^{1,\infty}(M,\R)$.
\end{lemma}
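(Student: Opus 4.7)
The proof will reduce the conclusion $\rho \in W^{1,\infty}(M, \R)$ to showing that $\rho$ is Lipschitz with respect to the Riemannian distance on $M$; the two are equivalent on a closed Riemannian manifold. Since $M$ is compact, $P = |\Sigma|$ is compact and $\Sigma$ has only finitely many simplices.

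The first step is a pointwise Lipschitz bound on each top-dimensional simplex. For each $n$-simplex $\sigma \in \Sigma$, the definition of piecewise smoothness provides a neighbourhood $U_\sigma \subset \R^n$ of the standard simplex $\Delta^n = [0,e_1,\ldots,e_n]$, a smooth map $\lambda_\sigma \colon U_\sigma \to M$ of rank $n$, and an affine map $A_\sigma \colon \Delta^n \to \sigma$ with $\lambda_\sigma|_{\Delta^n} = \kappa \circ A_\sigma$. Since $\rho|_{\kappa(\sigma)}$ has a smooth extension, the composition $\rho \circ \lambda_\sigma$ is smooth on $U_\sigma$. After shrinking $U_\sigma$ about $\Delta^n$, it is Lipschitz there, and $\lambda_\sigma|_{\Delta^n}$ is a bi-Lipschitz homeomorphism onto $\kappa(\sigma)$ (restriction of an immersion to a compact set). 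Consequently $\rho|_{\kappa(\sigma)}$ is Lipschitz with respect to the Riemannian distance on $M$, with a constant $L_\sigma$. Let $L = \max_\sigma L_\sigma$ over the finitely many top simplices.

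Next, I would upgrade these piecewise estimates to a global Lipschitz bound. Given $p \in M$, I take a geodesically convex ball $B$ about $p$ contained in the union of the finitely many top simplices meeting $p$. For a generic pair $x,y \in B$, the minimising geodesic $\gamma_{x,y}$ in $B$ meets the $(n{-}1)$-skeleton of $\kappa(\Sigma)$ in finitely many points, so $\gamma_{x,y}$ is a concatenation of arcs each lying inside a single $\kappa(\sigma_i)$. Applying the bound from the previous step to each arc and summing gives $|\rho(x)-\rho(y)| \le L\, d(x,y)$. Since $\rho$ is continuous and the generic pairs are dense, the same bound extends to all $x,y \in B$. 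Compactness of $M$ then promotes this local bound to a global Lipschitz estimate.

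The main technical obstacle is the transversality step: justifying that, generically, a minimising geodesic meets the codimension-one skeleton in a finite set. To sidestep this, an alternative is a direct $W^{1,\infty}$ argument. One defines a candidate derivative $D\rho$ piecewise by the (bounded) smooth derivatives on each $\kappa(\sigma)^\circ$; testing against $\varphi \in C_c^\infty$, one integrates by parts on each top simplex. The boundary integrals across any shared codimension-one face $\tau$ of two adjacent top simplices $\kappa(\sigma_1)$, $\kappa(\sigma_2)$ cancel, because the outward unit conormals are opposite and $\rho$ is continuous across $\tau$. Lower-dimensional faces contribute zero $\haus^n$-measure. This identifies $D\rho$ with the weak derivative, which is bounded by $L$, hence $\rho \in W^{1,\infty}(M,\R)$.
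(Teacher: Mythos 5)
Your final argument---defining the candidate derivative piecewise from the smooth extensions, integrating by parts simplex by simplex, and cancelling the boundary integrals over shared codimension-one faces using opposite orientations and continuity of $\rho$---is exactly the paper's proof (which invokes Stokes' theorem on manifolds with corners for the simplexwise integration by parts). You were right to abandon the geodesic-transversality route, whose genericity step is genuinely delicate; the direct weak-derivative computation you give instead is correct and is the intended argument.
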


\begin{proof}
Since $M$ is closed and $\kappa$ is a homeomorphism, $\Sigma$ has finitely many $n$-simplices $\sigma_1,\ldots,\sigma_j$. Without loss of generality, we may assume that each restriction $\kappa|_{\sigma_i}$, $i=1,\ldots,j$, is a smooth map of rank $n$. For each $i\in \{1,\ldots,j\}$, there exist a neighbourhood $U_i \subset M$ of $\kappa(\sigma_i)$ and a smooth function $\psi_i \colon U_i \to \R$ satisfying $\psi_i|_{\kappa(\sigma_i)}=\rho|_{\kappa(\sigma_i)}$.

Define a differential $1$-form $\alpha \colon M\to TM^*$ by setting $\alpha_x=(d\psi_i)_x$ if $x\in \kappa(\interior \, \sigma_i)$ and $\alpha_x=0$ otherwise. Clearly, $\rho \in L^\infty(M)$ and $\alpha \in L^\infty(M,TM^*)$. Thus, it suffices to prove that $\alpha$ is the weak differential of $\rho$.

Let $\varphi \in \Omega_0^{n-1}(M)$. Then
\begin{align*}
\int_M \rho \wedge d\varphi + \int_M \alpha \wedge \varphi &= \sum_{i=1}^j \left( \int_{\kappa(\sigma_i)} \psi_i \wedge d\varphi + \int_{\kappa(\sigma_i)} d\psi_i \wedge \varphi \right) \\
&= \sum_{i=1}^j \int_{\partial \kappa(\sigma_i)} \psi_i \wedge \varphi = \sum_{i=1}^j \int_{\kappa(\partial \sigma_i)} \rho \wedge \varphi
\end{align*}
by Stokes' theorem on manifolds with corners; see e.g. \cite[Theorem 16.25]{LE}.

For each $i$, let $\delta_1^i,\ldots,\delta_{n+1}^i$ be the $(n-1)$-faces of $\sigma_i$. Then
\[
\sum_{i=1}^j \int_{\kappa(\partial \sigma_i)} \rho \wedge \varphi = \sum_{i=1}^j \sum_{j=1}^{n+1} \int_{\kappa(\delta_j^i)} \rho \wedge \varphi.
\]
For each face $\delta_\ell^i$, there exists a unique face $\delta_{\ell'}^{i'}$ such that the faces $\delta_\ell^i$ and $\delta_{\ell'}^{i'}$ agree as sets but have opposite orientations. Thus, the terms $\int_{\kappa(\delta_\ell^i)} \rho \wedge \varphi$ and $\int_{\kappa(\delta_{\ell'}^{i'})} \rho \wedge \varphi$ cancel each other in the sum. Hence, the whole sum vanishes and the claim follows.
\end{proof}

\begin{proof}[Proof of Proposition \ref{prop:pl-is-qr}]
Let $\kappa_1 \colon P_1 \to M$ and $\kappa_2 \colon P_2 \to N$ be piecewise linear homeomorphisms, which are piecewise smooth. Let $\Sigma_1$ and $\Sigma_2$ be finite simplicial complexes for which $P_i=|\Sigma_i|$, $\kappa_i$ is smooth and full rank with respect to $\Sigma_i$, and the map $\kappa_2^{-1} \circ p\circ \kappa_1 \colon P_1 \to P_2$ is simplicial with respect to $\Sigma_1$ and $\Sigma_2$. Let $\sigma_1,\ldots,\sigma_j$ be the $n$-simplices of $\Sigma_1$.

Since $p$ is non-degenerate, each image $\kappa_2^{-1}(p(\kappa_1(\sigma_i)))$ is an $n$-simplex in $\Sigma_2$. Then, for $i=1,\ldots,j$, there exist a neighbourhood $U_i \subset M$ of $\kappa_1(\sigma_i)$, a neighbourhood $U_i' \subset N$ of $p(\kappa_1(\sigma_i))$, and a diffeomorphism $\varphi_i \colon U_i \to U_i'$ satisfying $\varphi_i|_{\kappa_1(\sigma_i)}=p|_{\kappa_1(\sigma_i)}$ since the maps $\kappa_1|_{\sigma_i}$ and $\kappa_2|_{\kappa_2^{-1}(p(\kappa_1(\sigma_i)))}$ are injective smooth maps of rank $n$ and $\kappa_2^{-1} \circ p\circ \kappa_1$ maps $\sigma_i$ smoothly to $\kappa_2^{-1}(p(\kappa_1(\sigma_i)))$.

Let $\iota \colon N\to \R^m$ be a Nash embedding. Then $p\circ \iota \in W^{1,\infty}(M,\R^m)$ by Lemma \ref{lem:sobolev}. Thus, $p\in W^{1,\infty}(M,N)$ and it remains to show that $p$ has finite distortion.

For almost every $x\in M$, we have $x\in \interior \, \kappa(\sigma_i)$ for some $i$ and hence
\[
\frac{\norm{Dp(x)}^n}{\det Dp(x)} \le \frac{\max_{\kappa(\sigma_i)} \norm{D\varphi_i}^n}{\min_{\kappa(\sigma_i)} \det D\varphi}.
\]
Since
\[
\max_{1\le i\le j} \frac{\max_{\kappa(\sigma_i)} \norm{D\varphi_i}^n}{\min_{\kappa(\sigma_i)} \det D\varphi} <\infty,
\]
we conclude that $p$ has finite distortion. This concludes the proof.
\end{proof}

%%%%%%%%%%%%%%%%%%%%%%%%%%%%%%%%%%%%%%%%%%%%%%%%%%%%%%%%%%%%%%%%%%%%%%%%%%%%%%%
%%%%%%%%%%%%%%%%%%%%%%%%%%%%%%%%%%%%%%%%%%%%%%%%%%%%%%%%%%%%%%%%%%%%%%%%%%%%%%%
%%%%%%%%%%%%%%%%%%%%%%%%%%%%%%%%%%%%%%%%%%%%%%%%%%%%%%%%%%%%%%%%%%%%%%%%%%%%%%%

\bibliographystyle{abbrv}

\begin{thebibliography}{10}

\bibitem{BonkM:Quamc}
M.~Bonk and J.~Heinonen.
\newblock Quasiregular mappings and cohomology.
\newblock {\em Acta Math.}, 186(2):219--238, 2001.

\bibitem{BP}
M.~Bonk and P.~Poggi-Corradini.
\newblock The {R}ickman-{P}icard theorem.
\newblock {\em Ann. Acad. Sci. Fenn. Math.}, 44(2):615--633, 2019.

\bibitem{BR}
G.~E. Bredon.
\newblock {\em Sheaf theory}, volume 170 of {\em Graduate Texts in
  Mathematics}.
\newblock Springer-Verlag, New York, second edition, 1997.

\bibitem{DO}
S.~K. Donaldson.
\newblock An application of gauge theory to four-dimensional topology.
\newblock {\em J. Differential Geom.}, 18(2):279--315, 1983.

\bibitem{DP}
D.~Drasin and P.~Pankka.
\newblock Sharpness of {R}ickman's {P}icard theorem in all dimensions.
\newblock {\em Acta Math.}, 214(2):209--306, 2015.

\bibitem{FR}
M.~H. Freedman.
\newblock The topology of four-dimensional manifolds.
\newblock {\em J. Differential Geometry}, 17(3):357--453, 1982.

\bibitem{GromovM:Hypmga}
M.~Gromov.
\newblock Hyperbolic manifolds, groups and actions.
\newblock In {\em Riemann surfaces and related topics: Proceedings of the 1978
  Stony Brook Conference (State Univ. New York, Stony Brook, N.Y., 1978)},
  volume~97 of {\em Ann. of Math. Stud.}, pages 183--213, Princeton, N.J.,
  1981. Princeton Univ. Press.

\bibitem{Heikkila}
S.~Heikkilä.
\newblock Signed quasiregular curves.
\newblock {\em J. Anal. Math.}, 150(1):37--55, 2023.

\bibitem{IL}
T.~Iwaniec and A.~Lutoborski.
\newblock Integral estimates for null {L}agrangians.
\newblock {\em Arch. Rational Mech. Anal.}, 125(1):25--79, 1993.

\bibitem{Iwaniec-Martin}
T.~Iwaniec and G.~Martin.
\newblock Quasiregular mappings in even dimensions.
\newblock {\em Acta Math.}, 170(1):29--81, 1993.

\bibitem{JormakkaJ:Quamf3}
J.~Jormakka.
\newblock The existence of quasiregular mappings from {${\bf R}\sp 3$} to
  closed orientable {$3$}-manifolds.
\newblock {\em Ann. Acad. Sci. Fenn. Ser. A I Math. Dissertationes}, (69):44,
  1988.

\bibitem{Kangasniemi-Adv}
I.~Kangasniemi.
\newblock Conformally formal manifolds and the uniformly quasiregular
  non-ellipticity of {$(\Bbb S^2\times\Bbb S^2)\#(\Bbb S^2\times\Bbb S^2)$}.
\newblock {\em Adv. Math.}, 393:Paper No. 108103, 48, 2021.

\bibitem{Kangasniemi-AJM}
I.~Kangasniemi.
\newblock Sharp cohomological bound for uniformly quasiregularly elliptic
  manifolds.
\newblock {\em Amer. J. Math.}, 143(4):1079--1113, 2021.

\bibitem{Kotschick}
D.~Kotschick.
\newblock On products of harmonic forms.
\newblock {\em Duke Math. J.}, 107(3):521--531, 2001.

\bibitem{LE}
J.~M. Lee.
\newblock {\em Introduction to smooth manifolds}, volume 218 of {\em Graduate
  Texts in Mathematics}.
\newblock Springer, New York, second edition, 2013.

\bibitem{Luisto-Prywes}
R.~Luisto and E.~Prywes.
\newblock Open and discrete maps with piecewise linear branch set images are
  piecewise linear maps.
\newblock {\em J. Lond. Math. Soc. (2)}, 103(3):1186--1207, 2021.

\bibitem{Martin-Mayer-Peltonen}
G.~Martin, V.~Mayer, and K.~Peltonen.
\newblock The generalized {L}ichnerowicz problem: uniformly quasiregular
  mappings and space forms.
\newblock {\em Proc. Amer. Math. Soc.}, 134(7):2091--2097, 2006.

\bibitem{Okuyama-Pankka}
Y.~Okuyama and P.~Pankka.
\newblock Equilibrium measures for uniformly quasiregular dynamics.
\newblock {\em J. Lond. Math. Soc. (2)}, 89(2):524--538, 2014.

\bibitem{Pankka-Duke}
P.~Pankka.
\newblock Slow quasiregular mappings and universal coverings.
\newblock {\em Duke Math. J.}, 141(2):293--320, 2008.

\bibitem{Pankka-GAFA}
P.~Pankka.
\newblock Mappings of bounded mean distortion and cohomology.
\newblock {\em Geom. Funct. Anal.}, 20(1):229--242, 2010.

\bibitem{Pankka-Souto}
P.~Pankka and J.~Souto.
\newblock Bubbling of quasiregular maps.
\newblock {\em Math. Ann. (to appear), \emph{arXiv: 1904.00885}}.

\bibitem{Peltonen}
K.~Peltonen.
\newblock On the existence of quasiregular mappings.
\newblock {\em Ann. Acad. Sci. Fenn. Ser. A I Math. Dissertationes}, (85):48,
  1992.

\bibitem{PZ}
R.~Piergallini and D.~Zuddas.
\newblock Branched coverings of {$\Bbb C\rm P^2$} and other basic 4-manifolds.
\newblock {\em Bull. Lond. Math. Soc.}, 53(3):825--842, 2021.

\bibitem{PR}
E.~Prywes.
\newblock A bound on the cohomology of quasiregularly elliptic manifolds.
\newblock {\em Ann. of Math. (2)}, 189(3):863--883, 2019.

\bibitem{RI3}
S.~Rickman.
\newblock On the number of omitted values of entire quasiregular mappings.
\newblock {\em J. Analyse Math.}, 37:100--117, 1980.

\bibitem{RI1}
S.~Rickman.
\newblock Existence of quasiregular mappings.
\newblock In {\em Holomorphic functions and moduli, {V}ol. {I} ({B}erkeley,
  {CA}, 1986)}, volume~10 of {\em Math. Sci. Res. Inst. Publ.}, pages 179--185.
  Springer, New York, 1988.

\bibitem{Rickman-book}
S.~Rickman.
\newblock {\em Quasiregular mappings}, volume~26 of {\em Ergebnisse der
  Mathematik und ihrer Grenzgebiete (3) [Results in Mathematics and Related
  Areas (3)]}.
\newblock Springer-Verlag, Berlin, 1993.

\bibitem{RI2}
S.~Rickman.
\newblock Simply connected quasiregularly elliptic 4-manifolds.
\newblock {\em Ann. Acad. Sci. Fenn. Math.}, 31(1):97--110, 2006.

\bibitem{SE}
J.-P. Serre.
\newblock {\em A course in arithmetic}.
\newblock Graduate Texts in Mathematics, No. 7. Springer-Verlag, New
  York-Heidelberg, 1973.
\newblock Translated from the French.

\end{thebibliography}

\end{document}